\def\bel{\begin{equation}\label}
\def\eeq{\end{equation}} 
\def\bel{\begin{equation}\label}
\def\eeq{\end{equation}}
\newtheorem{definition}{Definition}[section]
\newtheorem{theorem}{Theorem}[section]
\newtheorem{remark}{Remark}[section]
\newtheorem{lemma}{Lemma}[section]
\newtheorem{proposition}{Proposition}[section]
\newtheorem{example}{Example}[section]
\def\bel{\begin{equation}\label}
\def\eeq{\end{equation}}
\journal{ }
\def\fudge{\mathchoice{}{}{\mkern.5mu}{\mkern.8mu}}
\def\bbc#1#2{{\rm \mkern#2mu\vbar\mkern-#2mu#1}}
\def\bbb#1{{\rm I\mkern-3.5mu #1}}
\def\bba#1#2{{\rm #1\mkern-#2mu\fudge #1}}
\def\bb#1{{\count4=`#1 \advance\count4by-64 \ifcase\count4\or\bba A{11.5}\or0
\bbb B\or\bbc C{5}\or\bbb D\or\bbb E\or\bbb F \or\bbc G{5}\or\bbb H\or
\bbb I\or\bbc J{3}\or\bbb K\or\bbb L \or\bbb M\or\bbb N\or\bbc O{5} \or
\bbb P\or\bbc Q{5}\or\bbb R\or\bbc S{4.2}\or\bba T{10.5}\or\bbc U{5}\or%
\bbb P\or\bbc Q{5}\or\bbb R\or\bba S{8}\or\bba T{10.5}\or\bbc U{5}\or
\bba V{12}\or\bba W{16.5}\or\bba X{11}\or\bba Y{11.7}\or\bba Z{7.5}\fi}}
\def \B {\mathbb{B}}
\def \R {\mathbb{R}}
\def \C {{\mathcal  C}}
\def \N {{\bb N}}
\def \vsm{\vskip 0.2 truecm}
\def \vv{\vskip 0.5 truecm}
\def\bel{\begin{equation}\label}
\def\eeq{\end{equation}}
\def \vsm{\vskip 0.2 truecm}
\def \ds{\displaystyle}
\def \vv{\vskip 0.5 truecm}
\def \T{{\mathcal T}}
\def \CC{{\bold C}}
\def \C{\mathcal C}
\def\ds{\displaystyle}
\def\bega{\begin{array}}
\def\enda{\end{array}}
\begin{document}
\begin{frontmatter}
\title{Normality and Gap Phenomena  in Optimal Unbounded    Control\tnoteref{label1}}
\tnotetext[label1]{This research is partially supported by the  INdAM-GNAMPA Project 2017 "Optimal impulsive control: higher order necessary conditions and gap phenomena";  and by the Padua University grant PRAT 2015 ``Control
of dynamics with reactive constraints''}
\author{Monica Motta}
  \ead{motta@math.unipd.it}
\cortext[cor1]{M. Motta}
\address{Dipartimento di Matematica,
Universit\`a di Padova\\ Via Trieste, 63, Padova  35121, Italy\\
Telefax (39)(49) 827 1499,\,\, Telephone (39)(49) 827 1368} 
 \author{Franco Rampazzo}
  \ead{rampazzo@math.unipd.it}
\address{Dipartimento di Matematica,
Universit\`a di Padova\\ Via Trieste, 63, Padova  35121, Italy\\
Telefax (39)(49) 827 1499,\,\, Telephone (39)(49) 827 1342} 
\author{Richard Vinter}
  \ead{r.vinter@imperial.ac.uk}
\address{Department of Electrical and Electronic Engineering, \\ Imperial College London, Exhibition Road, London SW7 2BT, UK\\
 Telefax (+44) 207 594 6282,\,\, Telephone (+44) 207 594 6287}  
\date{\today}

\begin{abstract} 
 Optimal unbounded control problems with linear growth w.r.t. the control, both in the dynamics and in the cost, may fail to have  minimizers in the class of absolutely continuous state trajectories. For this reason, extended versions of such problems have been investigated, in which the domain is extended to include possibly discontinuous state trajectories of bounded variation, and for which existence of minimizers is guaranteed. It is of interest to know whether the passage from the original optimal control problem to its extension introduces an infimum gap. This will reveal whether it is possible to approximate extended minimizers by absolutely continuous state trajectories, as might be required for engineering implementation, and whether numerical schemes might be ill-conditioned. This paper provides  sufficient conditions under which there is no infimum gap, expressed in terms of normality of extremals. The link we establish between infimum gaps and normality gives insights into the infimum gap phenomenon. But, perhaps more importantly, it opens up a new approach to devising useful tests for the absence of infimum gaps, namely to supply verifiable sufficient conditions for normality of extremals. We give several examples of the use of this approach, and show that it leads to either new conditions, or improvement of known conditions, for no infimum gaps. We also give a criterion for non infimum gaps, which covers some problems where the normality condition is violated, illustrating that sufficient conditions of normality type, while covering many cases, are not necessary. 
\end{abstract}

\begin{keyword}
Optimal Control  \sep  Maximum Principle  \sep  Impulsive Control  \sep  Gap phenomena.
\MSC[2010]  49N25  \sep 34K45 \sep 49K15.    
 \end{keyword}
\end{frontmatter}

\section{Introduction}
 It is well known in Optimization Theory  that the infimum cost is `stable' under structural changes, if the optimization problem considered is {\it normal}, i.e. Lagrange multiplier rules are valid in a form that requires the cost multiplier to differ from zero (see, e.g., \cite{bonnans}, \cite{Dontchev}).  Likewise  in  Optimal  Control Theory, it has been shown that, under normality-type hypotheses, the infimum cost will not decrease  when we enlarge the class of state trajectories to include relaxed state trajectories, i.e. trajectories whose derivatives lie  in the convexified velocity set, a procedure that can be interpreted as a (infinite-dimensional) structural change to the domain of the optimal control problem under consideration.   When there is no such decrease, we say  `there is no infimum gap' (see \cite{warga}, \cite{warga1}, \cite{PV1},  \cite{PV2}). 
 We remark that no infimum gap can occur when the right endpoint of state trajectories is free, in consequence of the Relaxation Theorem  which tells us that  the set  of state trajectories is `$C^0$-dense' in the set of  relaxed state trajectories. But infimum gaps may arise when the right endpoint is constrained.
\ \\

 Similar considerations come into play in connection with the following class of optimal control problems,  in which
the original dynamics {are} unbounded and the customary  coercivity hypotheses, which have the effect of excluding optimal trajectories that are discontinuous, are no longer  invoked.

\begin{equation}\label{intro1}  
(P)
\left\{
\begin{array}{l}
\;\;\;\;\;\;\;\;\;\;\;\;\mbox{Minimize } h(t_1,x(t_1),t_2,x(t_2),v(t_2))
\\ [1.5ex]
\ds\mbox{over } t_1, \, t_2\in\R, \ t_1<t_2, \  (x,v, u) \in W^{1,1}([t_1,t_2];\R^{n}\times\R\times\R^m)  
\ \mbox{satisfying }  
\\ [1.5ex]
\displaystyle\frac{dx}{dt}(t)\,=\, f(t,x(t)) + \sum_{j=1}^{m}g_{j}(t,x(t))\frac{du^j}{dt}(t) \, \quad \mbox{ a.e. } t \in [t_1,t_2]  \\ [1.5ex]
\displaystyle\frac{dv}{dt}(t)\,=\,  \left|\frac{du}{dt}(t)\right|  \, \quad \mbox{ a.e. } t \in [t_1,t_2]  \\ [1.5ex]
 \displaystyle\frac{du}{dt}(t) \in \mathcal{C}\,\, \mbox{ a.e. } t \in [t_1,t_2], \\ [1.5ex]
\ds v(t_1)=0, \quad v(t_2)\leq K,     \quad  \big(t_1,x(t_1),t_2, x(t_2)\big) \in {\mathcal T},
\end{array}
\right.
\end{equation}
where  $K>0$ is a fixed constant (possibly equal to $+\infty$), $\mathcal{C}\subseteq\R^m$ is  a closed convex cone,  and the end-point constraint ${\mathcal T}\subseteq
 \R\times\R^n\times\R\times\R^n$ is a closed subset.  Notice that for every $t\in[t_1,t_2]$,  $v(t)$ is nothing but  the {\em total variation of $u $  on $[t_1,t]$. } In particular,  the choice  $K=+\infty$  means  that  there are no  constraints on the total variation of the controls $u $. 
 Since the  derivatives $\ds \frac{du}{dt}$  -- here playing the role of controls -- are not $L^{\infty}$ uniformly bounded, in general, due to a lack of  coerciveness, minima do not exist in the set of $W^{1,1}$ trajectories, even if one assumes the control cone $\mathcal C$ to be convex. So, in general, minimizing sequences  of trajectories do not even converge  to a continuous path. This is why we call these problems {\it impulsive}. We remark that the occurrence of impulses (i.e. discontinuities of $(x,u,v)(\cdot)$) is not ruled out by the weak coerciveness assumption $K < +\infty$.  However, through a time-reparameterization $t=t(s)$ based on the   total variation  map  $v(\cdot)$  one can  reformulate  problem (P) as  the following equivalent  space-time minimum control problem, having  the  same form but with   {\it bounded} control derivatives.
 
\begin{equation}\label{introextended}
(P_e) 
\left\{
\begin{array}{l}
\displaystyle
\mbox{Minimize } h(y^0(0),y(0),y^0(S), y(S),\nu(S))
\\ [1.5ex]
 \displaystyle\mbox{over } S> 0,\, ({y^0}, y,\nu, \varphi^0,\varphi)  \in W^{1,1}([0,S];\R\times\R^{n}\times\R\times\R\times \R^m)\,\,\,
\mbox{satisfying }
\\ [1.5ex]
\begin{array}{l}
\displaystyle\frac{d{y^0}}{ds} (s)\,=\, \frac{d\varphi^0}{ds}(s) \ \mbox{ a.e. } s \in [0,S]\,, \\ [1.5ex]
\displaystyle\frac{dy}{ds} (s)\,=\, f(y^0(s), y(s))\frac{d\varphi^0}{ds}(s)+ \sum_{j=1}^{m}g_{j}(y^0(s), y(s))\frac{d\varphi^j}{ds}(s) \ \mbox{ a.e. } s \in [0, S]\,,
\\ [1.5ex]
\displaystyle\frac{d{\nu}}{ds} (s)\,=\, \left|\frac{d\varphi}{ds}(s)\right| \ \mbox{ a.e. } s \in [0,S]\,, \\ [1.5ex]
   \displaystyle
 \bigg(\frac{d\varphi^0}{ds},\frac{d\varphi}{ds}\bigg)(s)\in \CC 
\  \mbox{ a.e. } s \in [0,S]\,, 
\\ [1.5ex]
\displaystyle \nu(0)=0, \quad \nu(S)\le K,\,\,\quad\Big(y^0(0),y(0) , y^0(S),y(S)\Big)\in \T,  \end{array}
\end{array}
\right. \end{equation}
where
$$
\CC:=\left\{(w^0,w)\in \R_+\times \C  :  \ w^0+|w | = 1\right\}
$$
and  $\varphi^0: [0,S]\to [t_1,t_2]$ is a   surjective, {\it non-decreasing}, {parameterization of  $[t_1,t_2]$}. 
The controls  $(\varphi^0,\varphi)$    verify  $\frac{d\varphi^0}{ds}(s)+\left|\frac{d\varphi}{ds}(s)\right|=1$ a.e., so    they  are $1$-Lipschitz continuous. The embedding of the  problem \eqref{intro1}  into the standard control problem \eqref{introextended} is obtained  by setting  
\begin{equation}\label{introembed} s(t):=\int_{t_1}^t \left(1+\left|\frac{du}{d\tau}(\tau)\right| \right) d\tau \quad (\varphi^0,\varphi) := (id, u)\circ s^{-1}  \quad (y^0,y) := (id, x)\circ s^{-1} .
\end{equation} 
Let us point out that by allowing the time-maps    $\varphi^0$ {\it to be constant on non-degenerate intervals} \footnote{A distributional, or measure-theoretical, approach is out of question here, in view of the fact that the vector fields $g_i$ are allowed to be non-commutative, see  \cite{BR} \cite{Haiek}.},  we arrive at parameterized limits of   trajectory  graphs  --here called {\it extended sense trajectories}-- which are not graphs of trajectories  of the original problem. For a  selective  bibliography on this subject, we refer the reader to \cite{BR},   \cite{MR}, \cite{MR1}, \cite{MR2}, \cite{MS},  \cite{MiRu},  \cite{SV}, \cite{KDPS},  \cite{GS}, \cite{Suss}, \cite{WZ} \cite{AKP1},   \cite{AKP3},     \cite{SV1}, \cite{MiRu}  and to the references therein.
  In particular,  the state trajectories $y $ may happen to be non-constant on $s$-subintervals where the  real time $ t=y^0 $ is constant. On the other hand,  it is known that  $s\mapsto(\varphi^0,\varphi)(s) $ coincides with the reparameterization $s\mapsto (id,u)(t(s))$ of the  graph $t\mapsto (id,u)(t)$ of some {\it strict sense}--i.e. absolutely continuous--  control  $u$ if and only if $\ds\frac{d\varphi^0}{ds}>0$ almost everywhere.
 
   Since the set of embedded strict sense trajectories \footnote{Namely, the extended sense trajectories associated to graph reparameterizations of strict-sense controls.}  is $C^0$-dense in the set of extended sense trajectories, it is natural to address the infimum gap issue, as has previously been done in the context of relaxation for optimal control problems  (\cite{warga}, \cite{warga1}, \cite{PV1},  \cite{PV2}). That is, one can   question whether  the infimum value of the functional among the strict-sense trajectories happens to be greater than the corresponding infimum among the extended-sense trajectories.
Such infimum {gaps} may actually occur, as shown by simple examples. In the present  paper we aim to   explore the  connection between `normality' of a minimizer of problem   $(P_{e})$ and the   occurrence of  an infimum gap. In this case,  {\it normality} means that if $(p_0 ,p ,\pi,\lambda)$ is {an arbitrary } set of `multipliers' ( i.e., an adjoint path)   appearing  in the Maximum Principle \footnote{The adjoint component $p_0$ corresponds to the time variable $y^0$, while $\pi$ corresponds to the variable `total variation', $\nu$ (see below).}, then $\lambda > 0$.  Omitting some details concerning the precise hypotheses we impose on the data, our main result (see Theorem \ref{cor}) may be summarized as follows: 
 \begin{theorem}
 Consider the optimal control problem \eqref{intro1} and its extended sense formulation \eqref{introextended}. Assume that
  some minimizer for  $(P_{e})$ is a normal extremal.   Then there is no infimum gap. 
  \end{theorem}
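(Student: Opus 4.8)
The plan is to argue by contradiction. Suppose that the strict-sense infimum $c_s$ is strictly larger than the extended-sense infimum $c_e$ (an infimum gap), and let $\bar z=(\bar y^0,\bar y,\bar\nu,\bar\varphi^0,\bar\varphi)$ on $[0,\bar S]$ be the \emph{normal} minimizer of $(P_e)$ furnished by the hypothesis, so that its cost equals $c_e<c_s$. I will produce from this data a sequence of genuine \emph{strict-sense}, \emph{feasible} trajectories whose cost tends to $c_e$; since $c_e<c_s$, this contradicts the very definition of $c_s$ as the strict-sense infimum, and the gap cannot exist.

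The construction has two ingredients. First, by the $C^0$-density of the embedded strict-sense trajectories in the extended class (recalled in the Introduction), there is a sequence of strict-sense trajectories $z_k\to\bar z$. Writing the augmented endpoint vector as $E(z)=\big(y^0(0),y(0),y^0(\bar S),y(\bar S),\nu(\bar S),J(z)\big)$, continuity gives $E(z_k)\to E(\bar z)$; in particular the cost of $z_k$ tends to $c_e$ and the constraint residual --- the distance of $(y^0(0),y(0),y^0(\bar S),y(\bar S))$ from $\T$ together with the excess $\max\{0,\nu(\bar S)-K\}$ --- tends to $0$. Thus $z_k$ already has cost below $c_s$ for $k$ large; the only defect is that $z_k$ need not be exactly feasible. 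The whole problem is therefore to \emph{restore feasibility} by a control perturbation so small that the cost stays below $c_s$.

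Second, I would exploit normality precisely to carry out this feasibility correction. I invoke the Maximum Principle for $(P_e)$ at $\bar z$, whose multipliers $(p_0,p,\pi,\lambda)$ play the role of the normal to a hyperplane separating the \emph{variational cone} of attainable endpoint--cost directions from the cone of improving-and-feasible directions. The existence of a nontrivial multiplier set with $\lambda=0$ is exactly a separation by the \emph{constraint} directions alone, i.e. a failure of a constraint-qualification / controllability-to-$\T$ condition; normality ($\lambda>0$ for every multiplier set) is equivalent to the surjectivity of the linearized endpoint map onto the normal directions of the constraints $\T$ and $\nu(\bar S)\le K$. Because strict-sense needle variations (obtained by pushing the control component $w^0=d\varphi^0/ds$ off the value $0$ while staying in the cone $\CC$) approximate the extended variations that build this cone, surjectivity for $\bar z$ persists for the nearby base points $z_k$. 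A Lyusternik / inverse-function argument then lets me correct the constraint residual of $z_k$, of size $\delta_k\to 0$, by a strict-sense control perturbation of the same order, producing admissible \emph{strict} trajectories $\tilde z_k$ that are exactly feasible and whose cost differs from that of $z_k$ by $O(\delta_k)$. Hence $J(\tilde z_k)\to c_e<c_s$, the desired contradiction.

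The main obstacle is this second ingredient: proving that extended-sense normality of $\bar z$ transfers to a genuine open-mapping property of the \emph{strict}-sense variational cone at the approximating base points, and that the feasibility correction can be realized by admissible perturbations. One must show that de-impulsifying variations --- keeping $w^0>0$ a.e., respecting the closed convex cone $\CC$, and preserving $\nu(\bar S)\le K$ --- span, to first order, all the constraint-normal directions predicted by normality, despite the non-commutativity of the fields $g_j$ (which forces the linearization to be taken along the full graph-completion $\bar z$ rather than through any measure-theoretic shortcut, cf. the footnote). Once this uniform surjectivity is in hand, the passage from first-order variations to the corrected trajectories $\tilde z_k$ is a standard application of the inverse-function / Lyusternik machinery.
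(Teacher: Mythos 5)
Your overall logical skeleton (normality lets you upgrade approximate feasibility to exact feasibility, hence no gap) points in the right direction, but the core step is asserted rather than proved, and it is precisely where all the mathematical content of the theorem lies. You claim that normality ``is equivalent to the surjectivity of the linearized endpoint map onto the normal directions of the constraints'' and then invoke Lyusternik/inverse-function machinery. This equivalence is not true as stated, and in the present setting it cannot be obtained by standard arguments: the target $\T$ is an arbitrary closed set and the transversality condition in the Maximum Principle is expressed through the \emph{limiting} normal cone $N_{\T\times[0,K]}$, which is nonconvex. Abnormality here is the existence of a degenerate multiplier, i.e.\ a separation-type condition mixing the adjoint flow, the maximum condition over the cone $\CC$, and limiting normals; its negation does not hand you surjectivity of any linearized map, nor a covering property with quantitative constants, without a limiting variational argument. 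The Lyusternik theorem needs exactly that surjectivity (or a covering property of a convex process) as input, so your proposal assumes the conclusion of the hard step. Moreover, even granting openness at $\bar z$, your plan requires it \emph{uniformly} at the nearby approximating strict-sense processes $z_k$ (with constants independent of $k$, and with variations confined to $w^0>0$ and to the cone $\CC$, preserving $\nu\le K$); this stability claim is also left unproved. You candidly flag all of this as ``the main obstacle,'' but resolving that obstacle is the theorem.

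For comparison, the paper closes this gap by proving the contrapositive: if \emph{no} exactly feasible embedded strict-sense process exists near the extended process (the process is \emph{isolated}), then it is an \emph{abnormal} extremal (Theorem \ref{mainextended}). The mechanism is Ekeland's variational principle applied to penalized free-endpoint problems $(\hat P_i)$ with cost $\phi=\max\{d_\T,(\nu-K)\vee 0\}$, followed by the PMP (with cost multiplier $1$, legitimate for free right endpoints) and a passage to the limit; isolation forces $\phi>0$ at the Ekeland minimizers, which makes the limiting transversality vector a \emph{unit} normal and hence yields a nontrivial multiplier with $\lambda=0$. Normality of the given minimizer then immediately implies it is not isolated, and continuity of $h$ alone finishes the proof of Theorem \ref{cor} --- no cost-correction estimates of order $O(\delta_k)$ are needed. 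In short, the direction ``no covering $\Rightarrow$ degenerate multiplier'' is the one that can be proved by variational principles; the direction you want to use directly, ``normality $\Rightarrow$ covering,'' is only available as its contrapositive.
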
   
 As we shall see, this relation  is proved by means of  a topological argument  concerning  the properties of so-called  {\it isolated} extended sense feasible trajectories (see Theorem \ref{mainextended}). 
 We  point out that  our normality hypotheses are of more theoretical than practical interest.  Indeed to check them   requires knowledge of all sets of Lagrange multipliers $(p_0 ,p ,\pi,\lambda)$ in the Maximum   Principle, associated with some minimizer.   Yet, this drawback may be overcome  in  certain situations  in which  verifiable condition on the data can be identified, guaranteeing that every set of multipliers is normal. Such  considerations are the motivation for  the last part of the paper (see Section \ref{SNG}),  where  several  sufficient conditions for normality are provided in terms of quite  reasonable assumptions on the end-point constraints and the vector fields $f,g_1,\dots,g_m$. 
 
 A `no gap condition' is clearly desirable, in particular when numerical schemes are employed to solve specific problems.  We point out that  consideration of  impulsive systems is crucial in many  applications  (see, e.g. \cite{AB},  \cite{CSWML}, \cite{GRR}).    Instances in mechanics are   situations where  some   state parameters $(u_1,\ldots,u_m)$ are regarded as controls (\cite{AB1}, \cite{AB2},    \cite{BR1}). The fact that the derivatives of  these controls  appear linearly in the dynamical equations (rather than  quadratically) is an intrinsic, metric property of the foliation $u=c, c\in \R^m$, when the space of states is  endowed with the kinetic energy metric. Examples where this  property is verified    include  a swing where the length  of the swing is  regarded as the control input,  and a multiple pendulum where  the  control  inputs  are identified with  the mutual angles between adjacent  single pendulums (see e.g. \cite{R}, \cite{BP}).

\vskip0.5truecm
The paper is organized as follows: in  Section 2 we describe how the original problem   is embedded in the extended problem; Section 3 concerns a Maximum Principle for the extended problem, which, compared with the standard version, includes an improved non-triviality condition.  In Section 4  we prove that an {\it  isolated}  extended-sense extremal cannot be normal and, as a corollary, we deduce the main result.  In Section 5  we identify some classes of  problems where normality --hence the non occurrence of gap-phenomena--  can be established a priori, without any knowledge of the multipliers associated with the given extended-sense minimizer, as earlier anticipated.  Finally, in   Section 6, we  present  some  instructive examples --including one where normality is shown to be not necessary to rule out gap phenomena.

\vskip0.5truecm
\noindent{\sc Preliminaries and notation.}  In an Euclidean space of dimension $N$, the norm $|x|$ of a vector $x$ is defined as $$  |x|:=\left(\sum_{i=1}^{N}(x^i)^2\right)^{1/2},$$  and the closed unit ball $\{x \,|\, |x| \leq 1\}$ is denoted by $\B_N$.  The surface of the unit ball $\{x\,|\, |x|=1\}$ is written $\partial \B_N$.   $\R^+:=[0,+\infty)$.  $d_{D}(x)$ denotes the Euclidean distance of the point $x\in E$ from a given closed subset  $D\subset E$, namely 
$d_{D}(x)\colonequals\min\{|x-x'|\,|\,x' \in D \}$. 
 
 \noindent
Some standard constructs from nonsmooth analysis are employed in this paper. Background material may be found in a number of texts, examples of which include \cite{ClarkeLed}, \cite{RW} and \cite{Vinter}.
 
\vsm\begin{definition}
Take a closed set $D\subset \R^{k}$ and a point $\bar{x} \in D$. The {\rm limiting normal cone} $N_D(\bar{x})$ of $D$ at $\bar{x}$ is defined to be
\begin{eqnarray*}
N_D(\bar{x})&:=& \Bigg\{ \;
p \; | \; \exists \; x_{i} \stackrel{D}{\longrightarrow}\bar{x}
, \; p_i \longrightarrow p \mbox{  s.t. } 
\limsup_{x \stackrel{D}{\rightarrow} x_{i}} \;\frac{p_i \cdot (x - x_i)}{|x - x_i|} \leq 0  \,\mbox{ for each } i 
\Bigg\},
\end{eqnarray*}
in which $x_{i} \stackrel{D}{\longrightarrow}\bar{x}$ is notation conveying the information `$x_{i}\rightarrow \bar{x}$' and `$x_{i}\in D$ for all $i$'. 
 \end{definition}

\begin{definition} Take  a lower semicontinuous function 
$f :\R^{k} \rightarrow \R$ and a point $\bar{x} \in \R^{k}$.
The {\it limiting subdifferential} of $f $ at $\bar{x}$ is
\begin{eqnarray*}
&&\partial f(\bar{x}) \;=\;  \Bigg\{ \xi\,|\, \exists\; \xi_{i} \rightarrow \xi \mbox{ and }
x_{i} \rightarrow \bar{x} \mbox{ s.t. }
 \limsup_{x \rightarrow  x_{i}} \frac{ \xi_{i} \cdot (x-x_{i})- f(x)+f(x_{i})}{|x-x_{i}|}
\, \leq  0 \, \mbox{ for each } i  \Bigg\}  . 
\end{eqnarray*}
\end{definition}
We take note, for future use, of the following facts about the limiting  subgradient of the distance function $d_{D} $ from an arbitrary closed  set $D$  (see, e.g., \cite[Section 4.8]{Vinter}):
 \vsm
{$i)$  if $\bar x \notin D$ and   $\xi\in\partial d_{D}(\bar x)$ then $|\xi|=1$ ;

$ii)$ $\partial d_{D}(\bar x)=N_D(\bar{x})\cap \B_k$   whenever  $\bar x \in D$.}

\section{The Optimal Control Problem and its Impulsive Extension}\label{SOC} 

\subsection{The optimal  control problem} 
Fix $K\in(0,+\infty]$,  a closed, convex cone  $\mathcal{C}\subseteq\R^m$ and a closed set ${\mathcal T}\subseteq
 \R\times\R^n\times\R\times\R^n$, and  consider the optimal control problem $(P)$  formulated in the Introduction.  We shall invoke the following hypothesis
\vsm

 \begin{itemize} \item[{\bf (H1)}: {\rm (i)}]
 the vector fields  $f :\R\times\R^{n} \rightarrow \R^{n}$, $g_{j} :\R\times\R^{n} \rightarrow \R^{n},\,j=1,\ldots,m$, are of class   $C^{1}$ and for some $A,B>0$,  verify
$$
|f(t,x)| + |g_1(t,x)| +\dots+|g_m(t,x)|\leq A|x| + B \quad \forall (t,x)\in\R\times\R^n;
$$
\item[{\rm (ii)}] the function $h :\R\times\R^{n}\times\R\times \R^{n}\times\R \rightarrow \R$ is of class $C^1$ and  for every $(t_1,x_1,t_2,x_2)$, the map $v\mapsto h(t_1,x_1,t_2,x_2,v)$ is monotone non-decreasing.
\end{itemize}

\begin{remark}\label{RemE}{\rm     
By means  of the  addition of  the trivial equations $\ds \frac{d\tilde x}{dt}(t) = \frac{du}{dt}(t)$,  where $\tilde x = (x^{n+1},\ldots, x^{n+m})$, we can allow $h$, $f$, $g_j$, $j=1,\dots,m$ to depend on $u$ as well as on $(t,x)$. 
}
\end{remark}
\vsm
\begin{definition}[Strict sense processes] 
Let  $t_1,t_2\in\R$ verify $t_1<t_2$. We   call a function $u \in W^{1,1}([t_1,t_2];\R^{m})$ a {\em strict sense control  on $[t_1,t_2]$}  if \, $ \ds \frac{du}{dt}(t) \in \mathcal{C}$ \,  a.e. in $[t_1,t_2]$. 
A {\em strict sense process} is a five-tuple  $(t_1,t_2, x ,v , u )$, $t_1<t_2$, in which $u $ is a strict sense control on $[t_1,t_2]$ and $(x,v) $ is a $W^{1,1}([t_1,t_2];\R^{n}\times\R)$ function satisfying 
\begin{equation}
\label{strict0}
\left\{
\begin{array}{l}
\displaystyle\frac{dx}{dt}(t)\,=\, f(t,x(t)) + \sum_{j=1}^{m}g_{j}(t,x(t))\frac{du^j}{dt}(t) \, \quad \mbox{ a.e. } t \in [t_1,t_2]  
\\
\displaystyle\frac{dv}{dt}(t)\,=\,  \left|\frac{du}{dt}(t)\right|  \, \quad \mbox{ a.e. } t \in [t_1,t_2]. 
 \end{array}
 \right.
 \end{equation} 
 If $(t_1,t_2, x ,v , u )$ is a strict sense process,  the four-tuple $(t_1,t_2,x ,v)\in \R^2\times W^{1,1}([t_1,t_2];\R^{n}\times\R) $ is called the (strict sense) {\em trajectory corresponding to $(t_1,t_2, x ,v , u )$}.
If  a strict sense process $(t_1,t_2, x ,v , u )$ also satisfies $v(t_1)=0$, $v(t_2)\leq K$ and  $\big(t_1,x(t_1),t_2, x(t_2)\big) \in \mathcal{T}$,  we say it  is {\em feasible}.
\end{definition}
Let us observe that  there is a trivial   one to one correspondence between trajectories $(t_1,t_2,x ,v)$ and the four-tuple one obtains by  extending continuously  $(x,v)$ to $\R$ in such a way that   both of them are constant outside the original domain $[t_1,t_2]$. 
 In order to define a metric on  strict sense trajectories  we  shall always consider their  extension  to $\R$.  Let us define the distance 
\begin{equation}\label{dinfty}
 d_{\infty}\Big((t_1,t_2,x ,v) , (\bar t_1,\bar t_2, \bar x ,\bar v)\Big) := | t_1 - \bar t_1 | + | t_2 - \bar t_2|+  \|(x,v)-(\bar x,\bar v)   \|_{L^{\infty}(\R)}.
  \end{equation}

\begin{definition}[Local and global strict sense  minimizers] We say a feasible strict sense process $(\bar t_1,\bar t_2,\bar x ,\bar v , \bar u )$ is a {\em strict sense $L^{\infty}$ local minimizer } if  there exists $\delta >0$ such that
\begin{equation}\label{min1}
h(\bar t_1,\bar x(\bar t_1), \bar t_2, \bar x(\bar t_2),\bar v(\bar t_2))\,\leq\, h(t_1, x(t_1),t_2,  x(t_2),v(t_2))
\end{equation}
for all feasible strict sense processes $(t_1,t_2, x ,v , u )$ verifying 
$$
d_{\infty}\Big((t_1,t_2,x ,v) , (\bar t_1,\bar t_2, \bar x ,\bar v)\Big)  \leq \delta,
$$
 If  relation \eqref{min1} is satisfied for {\em all}  strict sense feasible processes $(t_1,t_2, x ,v , u )$, we say that $({\bar t}_1,{\bar t}_2, \bar x ,\bar v , \bar u )$ is a {  {\em strict sense $L^{\infty}$ (global) minimizer}.}
\end{definition}

  \subsection{The extended system }

Let $(t_1,t_2, x , v , u )$ be a strict sense process.  If we reparameterize  time  in the graph-equation 
\begin{equation}\label{graph}
\left\{\begin{array}{l}\ds\frac{dx^0}{dt}(t) = 1 \\ [1.5ex]
 \displaystyle\frac{dx}{dt}(t)\,=\, f(  x^0(t), x(t)) + \sum_{j=1}^{m}g_{j}(x^0(t),x(t))\frac{du^j}{dt}(t)  \quad \text{a.e. } t\in[t_1,t_2], \\ [1.5ex]
 \displaystyle\frac{dv}{dt}(t)\,=\,  \left|\frac{du}{dt}(t)\right|  \, \quad \mbox{ a.e. } t \in [t_1,t_2], \\ [1.5ex] 
 x^0(t_1)=t_1\,,
\end{array}\right.
\end{equation}
through a bi-Lipschitz  increasing, surjective map $\varphi^0:[0,S]\to [t_1,t_2]$,  we obtain the equivalent differential system  on $[0,S]$
 \begin{equation}
 \label{extended}
 \left\{
\begin{array}{l}
\displaystyle\frac{d{y^0}}{ds} (s)\,=\, \frac{d\varphi^0}{ds}(s) \ \mbox{ a.e. } s \in [0,S]\,, \\ [1.5ex]
\displaystyle\frac{dy}{ds} (s)\,=\, f(y^0(s), y(s))\frac{d\varphi^0}{ds}(s)+ \sum_{j=1}^{m}g_{j}(y^0(s), y(s))\frac{d\varphi^j}{ds}(s) \ \mbox{ a.e. } s \in [0, S]\,,
\\ [1.5ex]
\displaystyle\frac{d\nu}{ds} (s)\,=\, \left|\frac{d\varphi}{ds}(s)\right|  \ \mbox{ a.e. } s \in [0,S]\,, 
\\ [1.5ex]
\displaystyle\left(\frac{d\varphi^0}{ds},\frac{d\varphi}{ds}\right)(s)\in \CC
\  \mbox{ a.e. } s \in [0,S]\,,
\end{array}
\right. \end{equation}
where
$$
\CC:=\left\{(w^0,w)\in \R_+\times \C  :  \ w^0+|w | = 1\right\}.
$$
 If, however,  we allow the  map $\varphi^0 $ merely to be   non-decreasing, we arrive at a new, {\it impulsive}, system, in which  the $s$-intervals where $\varphi^0 $ is constant represent the {\it arcs of (nonlinear) instantaneous evolution}   of both the control  and the state:

\begin{definition}[Extended Sense Processes] An {\em extended sense  process} $(S,{y^0} , y ,\nu, \varphi^0 ,\varphi )$ comprises $S\geq 0$ and  Lipschitz continuous functions $({y^0}, y,\nu, \varphi^0,\varphi) :[0,S]\rightarrow \R\times\R^{n}\times\R\times\R\times\R^m$ satisfying \eqref{extended}.
\end{definition}

\begin{remark}\label{rate}{\rm {We observe that system \eqref{extended} is {\it rate independent}}. {Indeed,} if one considers a  bi-Lipschitz increasing map $\sigma:[0,S]\to [0,\tilde S]$, then $({y^0},y,\nu)\circ \sigma^{-1}$ is a solution of  \eqref{extended} on $ [0,\tilde S]$ corresponding to the control $(\varphi^0,\varphi)\circ \sigma^{-1}$ if and only if $({y^0},y,\nu)$ is a solution of  \eqref{extended} on $ [0, S]$ corresponding to the control $(\varphi^0,\varphi)$.  Thus, imposing the condition on controls
\begin{equation}\label{can}\frac{d\varphi^0}{ds}(s)+\left|\frac{d\varphi}{ds}(s)\right|=1\quad \text{a.e. $s\in[0,S]$ }
\end{equation}  is a convenient, but arbitrary,
{choice}\,\footnote{For instance, one could have used also  controls verifying  $\displaystyle\left(\frac{d\varphi^0}{ds},\frac{d\varphi}{ds}\right)(s)\in \CC$ with   $$\CC\colonequals\left\{(w^0,w)\in \R_+\times \mathcal{C}  :  \  |(w^0, w) | = 1\right\}\quad\mbox{or}\quad \CC:=\left\{(w^0,w)\in \R_+\times \mathcal{C}  :  \  |w^0| +|w^1|+\ldots + |w^m | = 1\right\}$$ or even 
$\CC:=\left\{(w^0,w)\in \R_+\times \mathcal{C}  :  \  \alpha< w^0 +|w|\le\beta\right\}$,  with any $\alpha$, $\beta$ such that  $0<\alpha<\beta$ (with the latter choice,  different pairs may well represent  the same control $u$).}}. 
  \end{remark}

There is a natural  embedding ${\cal I} $  of the family of strict sense processes  into the family of extended sense processes, which is  expressed by 
$$
 {\cal I}(t_1,t_2, x ,v,  u )\colonequals (S,y^0 , y,\nu ,\varphi^0 ,\varphi ), 
$$
where 
$$
\begin{array}{l}
\displaystyle\sigma(t) \colonequals \int_{t_1}^t\bigg(1+ \bigg|\frac{du}{d \tau} ({\tau})\bigg|\bigg)d{\tau}\,, \quad S:=\sigma(t_2), \\\, \\
\ds(y^0, y,\nu,\varphi^0,\varphi)(s)\colonequals  \big(id, x, v,id, u) (\sigma^{-1}( s)\big) \quad \forall s \in [0,S].
\end{array}
$$
 
Notice, in particular,  that
\begin{equation}\label{vareq}\nu(S)=v(t_2).
\end{equation}
Observe also that the map  $\varphi^0(=\sigma^{-1}):[0,S]\to [t_1,t_2]$ is increasing and  $1$-Lipschitz continuous. Furthermore  it  verifies  $\frac{d\varphi^0}{ds}(s)>0$ for almost every $s\in [0,S]$. Actually, {such mappings provide a} {\it characterization} of extended sense controls $(\varphi^0,\varphi)$ {that} are graphs of absolutely continuous,  strict sense controls $u$:

\begin{lemma}\label{characterization}  The embedding $\cal I$ is injective\footnote{Notice that the injectivity is a consequence of the fact that we require  $(\varphi^0,\varphi)$ to verify   $\frac{d\varphi^0}{ds}(s) +  \left|\frac{d\varphi^0}{ds}(s)\right|= 1$ a.e. in $[0,S]$.}.
Moreover, the  image space of the embedding $\cal I$ comprises  the subclass of extended sense processes
 $(S,{y^0} , y,\nu ,\varphi^0 ,\varphi )$ that satisfy $\frac{d\varphi^0}{ds}(s)>0$ almost everywhere. Actually, for every such extended sense process the map  $\varphi^0 $ is invertible  with inverse $(\varphi^0)^{-1}$ absolutely continuous. The map ${\cal I}^{-1}$ is defined (on the image of $\cal I$) by
$$
{\cal I}^{-1}(S,{y^0} , y,\nu ,\varphi^0 ,\varphi )=\left(t_1,t_2, x ,v,  u \right)\colonequals\big (\varphi^0(0),\varphi^0(S), y\circ (\varphi^0)^{-1} , \nu\circ (\varphi^0)^{-1}, \varphi\circ (\varphi^0)^{-1} \big).
$$
\end{lemma}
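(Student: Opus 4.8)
The plan is to prove the three assertions simultaneously by exhibiting the inverse map explicitly and checking that the formula for $\mathcal{I}^{-1}$ is a two-sided inverse on the indicated subclass; injectivity then drops out as a by-product of $\mathcal{I}^{-1}\circ\mathcal{I}=\mathrm{id}$. Throughout I write $\psi:=(\varphi^0)^{-1}$ when it exists, and I use that the cone $\CC$ forces $\frac{d\varphi^0}{ds}+|\frac{d\varphi}{ds}|=1$ a.e., so that $\frac{d\varphi^0}{ds}\in[0,1]$ and every $\varphi^0$ is $1$-Lipschitz and non-decreasing.

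First I would verify the forward inclusion: $\mathcal{I}$ lands in the class of extended processes with $\frac{d\varphi^0}{ds}>0$ a.e. Given a strict sense process $(t_1,t_2,x,v,u)$, the weight $\sigma(t)=\int_{t_1}^t(1+|\frac{du}{d\tau}|)\,d\tau$ is absolutely continuous and strictly increasing with $\sigma'=1+|\frac{du}{dt}|\ge 1$ a.e.; hence $\sigma:[t_1,t_2]\to[0,S]$ is an increasing absolutely continuous homeomorphism and $\varphi^0=\sigma^{-1}$ is $1$-Lipschitz with $\frac{d\varphi^0}{ds}(s)=1/\sigma'(\sigma^{-1}(s))\in(0,1]$ for a.e.\ $s$. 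That $(S,y^0,y,\nu,\varphi^0,\varphi)$ solves \eqref{extended} and obeys the normalization is precisely the reparameterization computation already carried out in passing from \eqref{graph} to \eqref{extended}; in particular, after the substitution $s=\sigma(t)$ one gets $\frac{d\varphi^0}{ds}+|\frac{d\varphi}{ds}|=(1+|u'|)/\sigma'=1$ a.e., and $y^0\equiv\varphi^0$ since both equal $\sigma^{-1}$ and share the initial value $t_1$.

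For the reverse inclusion I would take any extended process with $\frac{d\varphi^0}{ds}>0$ a.e. Since $\frac{d\varphi^0}{ds}\ge 0$ and $\int_a^b\frac{d\varphi^0}{ds}\,ds>0$ on every nondegenerate $[a,b]$, the Lipschitz map $\varphi^0$ is strictly increasing, hence a homeomorphism of $[0,S]$ onto $[\varphi^0(0),\varphi^0(S)]$. I then define $(t_1,t_2,x,v,u)$ by the stated $\mathcal{I}^{-1}$ formula and check it is a feasible strict sense process: $x,v,u$ are absolutely continuous as compositions of the Lipschitz maps $y,\nu,\varphi$ with the absolutely continuous $\psi$; differentiating $t=\varphi^0(s)$ gives $\psi'(t)=1/\frac{d\varphi^0}{ds}(\psi(t))$, so the chain rule yields $\frac{du}{dt}(t)=\frac{d\varphi}{ds}(s)\big/\frac{d\varphi^0}{ds}(s)$ and $\frac{dx}{dt}(t)=\frac{dy}{ds}(s)\big/\frac{d\varphi^0}{ds}(s)$ at $s=\psi(t)$. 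Dividing the second line of \eqref{extended} by $\frac{d\varphi^0}{ds}>0$ recovers the state equation in \eqref{strict0}, and since $\C$ is a cone and $(\frac{d\varphi^0}{ds},\frac{d\varphi}{ds})\in\CC$ forces $\frac{d\varphi}{ds}\in\C$, the positive rescaling gives $\frac{du}{dt}\in\C$ a.e. The two composition identities $\mathcal{I}\circ\mathcal{I}^{-1}=\mathrm{id}$ and $\mathcal{I}^{-1}\circ\mathcal{I}=\mathrm{id}$ then follow by change of variables; e.g.\ for the former, $1+|\frac{du}{dt}|=1/\frac{d\varphi^0}{ds}$ after the substitution yields $\sigma\circ\varphi^0=\mathrm{id}$, so the reconstructed parameterization coincides with the original, and injectivity of $\mathcal{I}$ is immediate from $\mathcal{I}^{-1}\circ\mathcal{I}=\mathrm{id}$.

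I expect the one genuinely non-formal point to be the absolute continuity of $\psi=(\varphi^0)^{-1}$, which is needed both to make the compositions defining $x,v,u$ absolutely continuous and to license the chain rule above. The relevant fact is that a strictly increasing absolutely continuous function whose derivative is positive almost everywhere has an absolutely continuous inverse; this is exactly where the hypothesis $\frac{d\varphi^0}{ds}>0$ a.e.\ (rather than merely $\ge 0$) is indispensable. I would prove it by verifying Luzin's condition (N) for $\psi$: using the identity $\lambda(\varphi^0(E))=\int_E\frac{d\varphi^0}{ds}\,ds$ for measurable $E$ (valid because $\varphi^0$ is absolutely continuous and monotone), a null set $N$ in the $t$-variable pulls back to $E=\psi(N)$ with $\int_E\frac{d\varphi^0}{ds}=0$, whence $\lambda(E)=0$ by positivity of $\frac{d\varphi^0}{ds}$; a monotone continuous map satisfying (N) is absolutely continuous. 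The remaining steps are routine chain-rule and change-of-variable manipulations.
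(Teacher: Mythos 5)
Your proof is correct, but it cannot be compared line-by-line with the paper's, because the paper offers none: Lemma \ref{characterization} is labelled a standard result and the reader is referred to \cite{AR}. Your argument is the natural self-contained one, and you correctly isolate the only genuinely delicate step, namely that $\psi=(\varphi^0)^{-1}$ is absolutely continuous. Your treatment of that step is the right one: the image-measure identity $\lambda(\varphi^0(E))=\int_E \frac{d\varphi^0}{ds}\,ds$ for monotone absolutely continuous maps, a.e.\ positivity of $\frac{d\varphi^0}{ds}$ to deduce Luzin's condition (N) for $\psi$, and the Banach--Zaretsky theorem (continuous, bounded variation, and (N) together imply absolute continuity) to conclude. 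Once this is in place, the forward inclusion, the verification that the stated formula defines a strict sense process, the two composition identities, and injectivity via $\mathcal{I}^{-1}\circ\mathcal{I}=\mathrm{id}$ are all routine, and you carry them out correctly; note that the identity $\sigma(t)=\int_{t_1}^{t}\psi'(\tau)\,d\tau=\psi(t)$ in your argument for $\mathcal{I}\circ\mathcal{I}^{-1}=\mathrm{id}$ is exactly where absolute continuity (rather than mere differentiability a.e.) of $\psi$ is consumed, so the architecture of your proof is sound.

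Two points should be made explicit in a polished write-up. First, chain-rule identities such as $\frac{d\varphi^0}{ds}(s)=1/\sigma'(\sigma^{-1}(s))$ and $\frac{du}{dt}(t)=\frac{d\varphi}{ds}(\psi(t))\,\psi'(t)$ hold only a.e., and the justification is that the exceptional null sets (points of non-differentiability of $\sigma$, $\varphi$, $y$, $\nu$, or where the relevant derivative vanishes) pull back to null sets under $\sigma$ and $\varphi^0$ respectively, again because absolutely continuous maps satisfy Luzin (N); without this remark those formulas are asserted rather than proved. Second, in the reverse inclusion, dividing the second equation of \eqref{extended} by $\frac{d\varphi^0}{ds}$ and composing with $\psi$ yields the vector fields evaluated at $\bigl(y^0(\psi(t)),x(t)\bigr)$, which matches the right-hand side of \eqref{strict0} only if $y^0\circ\psi=\mathrm{id}$, i.e.\ $y^0\equiv\varphi^0$. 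Since \eqref{extended} ties $y^0$ to $\varphi^0$ only through their derivatives, hence only up to an additive constant, the characterization of the image implicitly carries the normalization $y^0(0)=\varphi^0(0)$ (which does hold on the image of $\mathcal{I}$). This imprecision belongs to the lemma's statement as much as to your proof---it is invisible when $f$ and the $g_j$ are time-independent---but you use it silently and should flag it.
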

See e.g. \cite{AR} for a proof of this standard result.

\subsection{ The extended optimal control problem}
Consider the extended optimal control problem
$$
(P_{e}) \left\{
\begin{array}{l}
\displaystyle
\mbox{Minimize } h(y^0(0),y(0),y^0(S), y(S),\nu(S))
\\ [1.5ex]
 \displaystyle\mbox{over } S> 0,\, ({y^0}, y,\nu, \varphi^0,\varphi)  \in W^{1,1}([0,S];\R\times\R^{n}\times\R\times\R\times \R^m)\,\,\,
\mbox{satisfying }
\\ [1.5ex]
\begin{array}{l}
\displaystyle\frac{d{y^0}}{ds} (s)\,=\, \frac{d\varphi^0}{ds}(s) \ \mbox{ a.e. } s \in [0,S]\,, \\ [1.5ex]
\displaystyle\frac{dy}{ds} (s)\,=\, f(y^0(s), y(s))\frac{d\varphi^0}{ds}(s)+ \sum_{j=1}^{m}g_{j}(y^0(s), y(s))\frac{d\varphi^j}{ds}(s) \ \mbox{ a.e. } s \in [0, S]\,,
\\ [1.5ex]
\displaystyle\frac{d\nu}{ds} (s)\,=\,\left|\frac{d\varphi}{ds}(s)\right|  \ \mbox{ a.e. } s \in [0,S]\,, 
\\ [1.5ex]
   \displaystyle
 \bigg(\frac{d\varphi^0}{ds},\frac{d\varphi}{ds}\bigg)(s)\in \CC 
\  \mbox{ a.e. } s \in [0,S]\,, 
\\ [1.5ex]
\displaystyle \nu(0)=0, \quad \nu(S)\le K,\,\,\quad\Big(y^0(0),y(0) , y^0(S),y(S)\Big)\in \T. \end{array}
\end{array}
\right.$$

\begin{definition}
We say that an extended sense process $(S,{y^0} , y ,\nu, \varphi^0 ,\varphi )$ is  {\em feasible for $(P_{e})$} if   
$$
 \nu(0)=0, \quad \nu(S)\le K, 
 \ \ \Big(y^0(0),y(0) , y^0(S),y(S)\Big)\in \T.
$$
\end{definition}

\begin{definition}\label{deflocmin} A feasible extended sense process $(\bar S,\bar y^0 ,\bar  y ,\bar  \nu, \bar \varphi^0 ,\bar \varphi )$ is said to be an {\em extended sense $L^{\infty}$  local minimizer for $(P_{e})$} if  there exists $\delta >0 $ such that :
\begin{equation}
\label{min}
h(\bar y^0(0),\bar y(0),\bar y^0(\bar S), \bar  y(\bar S),\bar  \nu(\bar S)) \leq  h(y^0(0),y(0),y^0(S), y(S),\nu(S))
\end{equation}
 for all extended sense feasible processes
 $(S,{y^0} , y ,\nu, \varphi^0 ,\varphi )$ satisfying
\begin{equation}
\label{close2}
 d_{\infty}\Big((y^0(0),y^0(S), y,\nu) , (\bar y^0(0),\bar y^0(\bar S), \bar y,\bar\nu)\Big) \leq \delta .\,\,\footnotemark 
\end{equation}
\footnotetext{As in the strict sense case,  we mean that  $(  \bar y , \bar \nu)$  and $( y,\nu) $  are  continuously extended to $\R$  so that  they are constant outside the original domains  $[0, \bar S]$ and $[0,S]$, respectively.
{Let us observe, incidentally,  that
$$
S=\varphi^0(S)-\varphi^0(0)+\nu(S)=y^0(S)-y^0(0)+\nu(S).
$$
}}

  If \eqref{min} is satisfied for all  extended sense feasible processes  $(S,{y^0} , y ,\nu, \varphi^0 ,\varphi )$, we say that  $(\bar S,\bar y^0 ,\bar  y ,\bar \nu, \bar \varphi^0 ,\bar \varphi )$ is an {\em extended sense $L^{\infty}$ (global) minimizer}.  
\end{definition}
 
Let us remark that the  notion of extended sense $L^{\infty}$  local minimizer for $(P_{e})$ is consistent with the definition of $L^{\infty}$  local minimizer for problem (P). To be precise, it is not difficult to prove the following result.
\begin{lemma}\label{Min=} A process $(\bar t_1,\bar t_2,\bar x,\bar v, \bar u)$ is a $L^{\infty}$ local minimizer for problem (P) if and only if
$$
(\bar S,\bar y^0,\bar  y,\bar \nu  ,\bar \varphi^0,\bar \varphi):={\mathcal I}(\bar t_1,\bar t_2,\bar x, \bar v, \bar u)
$$
 is an  extended sense $L^{\infty}$  local minimizer for problem $(P_e)$  among  {\em embedded strict  sense} feasible processes.    Moreover,  these equivalent properties imply 
  $$
 h(\bar y^0(0),\bar y(0),\bar y^0(\bar S), \bar  y(\bar S),\bar  \nu(\bar S))=h(\bar t_1,\bar x(\bar t_1), \bar t_2, \bar x(\bar t_2),\bar v(\bar t_2)).
 $$
\end{lemma}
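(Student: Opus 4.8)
The argument rests on three properties of the embedding $\mathcal I$: by Lemma \ref{characterization} it is a bijection from strict sense feasible processes onto embedded strict sense feasible processes, and it preserves both the cost and feasibility. The cost identity (the ``moreover'' assertion) is immediate: writing $\bar\sigma(t):=\int_{\bar t_1}^t(1+|\tfrac{d\bar u}{d\tau}|)\,d\tau$, one has $\bar\sigma(\bar t_1)=0$ and $\bar\sigma(\bar t_2)=\bar S$, so $\bar\varphi^0=\bar\sigma^{-1}$ gives $(\bar y^0(0),\bar y(0))=(\bar t_1,\bar x(\bar t_1))$ and $(\bar y^0(\bar S),\bar y(\bar S))=(\bar t_2,\bar x(\bar t_2))$, while $\bar\nu(\bar S)=\bar v(\bar t_2)$ is exactly \eqref{vareq}; substituting into $h$ yields the claimed equality. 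The same endpoint identities show that strict sense feasibility of $(t_1,t_2,x,v,u)$ is equivalent to extended sense feasibility of its image, so $\mathcal I$ carries the feasible class bijectively onto the embedded feasible class, preserving $h$.

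It remains to transfer local minimality, i.e. to compare the two $d_\infty$-neighbourhoods through $\mathcal I$. For the implication ``$\bar{\mathcal I}$ a $(P_e)$-minimizer $\Rightarrow$ $\bar\cdot$ a $(P)$-minimizer'' I would show that $\mathcal I$ is continuous: small $d_\infty$-distance for $(P)$ forces small $d_\infty$-distance for $(P_e)$. Fix a competitor $(t_1,t_2,x,v,u)$, set $\alpha:=|t_1-\bar t_1|+\|v-\bar v\|_\infty$ and $\sigma(t):=t-t_1+v(t)$, so that $|\bar\sigma-\sigma|\le\alpha$ pointwise. For each $s$ put $t:=\varphi^0(s)$, $\bar t:=\bar\varphi^0(s)$, whence $\sigma(t)=\bar\sigma(\bar t)=s$ and therefore $|\bar\sigma(t)-\bar\sigma(\bar t)|\le\alpha$; since (H1)(i) and \eqref{strict0} give $|\tfrac{d\bar x}{dt}|\le M(1+\tfrac{d\bar v}{dt})$ along the compact reference arc (with $M$ bounding $|f|+\sum_{j}|g_j|$) and $\bar v$ is nondecreasing, the monotonicity of $\bar\sigma(\tau)=\tau-\bar t_1+\bar v(\tau)$ yields $|t-\bar t|\le\alpha$, $|\bar v(t)-\bar v(\bar t)|\le\alpha$, hence $|\bar x(t)-\bar x(\bar t)|\le 2M\alpha$. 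Consequently $\|y-\bar y\|_\infty\le\|x-\bar x\|_\infty+2M\alpha$ and $\|\nu-\bar\nu\|_\infty\le\|v-\bar v\|_\infty+\alpha$, both tending to $0$ with the $(P)$-distance. Continuity of $\mathcal I$ maps a $(P)$-ball into a $(P_e)$-ball, and restricting the assumed $(P_e)$-minimality of $\bar{\mathcal I}$ to the images of $(P)$-close competitors gives the $(P)$-minimality of $\bar\cdot$.

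The converse ``$\bar\cdot$ a $(P)$-minimizer $\Rightarrow$ $\bar{\mathcal I}$ a $(P_e)$-minimizer'' — the implication actually needed later, to apply the Maximum Principle for $(P_e)$ to an embedded minimizer — is more delicate, because $\mathcal I^{-1}$ is \emph{not} continuous: a reparameterization can relocate a steep feature of $u$, producing an embedded process $d_\infty$-close to $\bar{\mathcal I}$ whose strict sense trajectory is $L^\infty$-far from $(\bar x,\bar v)$. Thus one cannot merely restrict neighbourhoods. Instead I would argue by contradiction. An embedded competitor $P'$ violating the $(P_e)$-minimality of $\bar{\mathcal I}$ has, by the very definition of $d_\infty$ in \eqref{close2}, endpoint data $(t_1,x(t_1),t_2,x(t_2),v(t_2))$ arbitrarily close to that of $\bar\cdot$; and $h$, together with the constraints, depends on a process only through this endpoint data. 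The plan is then to \emph{realize} that endpoint datum by a strict sense process $Q$ that \emph{is} $d_\infty$-close to $\bar\cdot$ — built by shifting the end times and the initial state of $\bar\cdot$ and inserting a short control arc near $\bar t_2$ to adjust the terminal state and total variation — and to use the monotonicity of $h$ in $v$ ((H1)(ii)) to guarantee $h(Q)\le h(P')$. As $Q$ lies in the $(P)$-neighbourhood, $h(\bar\cdot)\le h(Q)\le h(P')<h(\bar\cdot)$, a contradiction.

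The main obstacle is precisely this realization step: one must show that an endpoint datum close to the reference one, reachable by the (possibly $L^\infty$-far) trajectory of $P'$, is also reachable by a trajectory that stays $L^\infty$-close to $\bar x$. This is where the affine-in-control structure of \eqref{strict0} and the admissibility of short, high-variation control arcs valued in the cone $\mathcal C$ have to be exploited, and where one must verify that the small terminal correction keeps $Q$ inside the prescribed $d_\infty$-neighbourhood and respects the constraint $v(t_2)\le K$.
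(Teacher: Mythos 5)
Your cost identity, the feasibility equivalence, and the implication ``$\mathcal{I}(\bar t_1,\bar t_2,\bar x,\bar v,\bar u)$ an embedded local minimizer $\Rightarrow$ $(\bar t_1,\bar t_2,\bar x,\bar v,\bar u)$ a local minimizer for $(P)$'' (via continuity of $\mathcal{I}$) are correct. The genuine gap is in the converse implication, which you rightly flag as the one needed to apply the Maximum Principle, but for which you only offer a plan. That plan rests on a false premise and would, moreover, fail as stated: you propose to ``realize'' the competitor's endpoint datum by grafting a short, high--variation control arc onto the reference process near $\bar t_2$. Under hypothesis (H1) alone there is no controllability assumption of any kind --- e.g.\ one may have $g_j\equiv 0$, or $\mathcal{C}=\{0\}$, or a cone $\mathcal{C}$ whose image under the $g_j$'s spans only a proper subspace --- so no short control arc can steer the terminal state to a prescribed nearby value, and the monotonicity of $h$ in $v$ cannot compensate for this. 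As written, the converse direction is therefore unproven, and the route you sketch cannot be completed under the paper's standing hypotheses.

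In fact the premise motivating that route is wrong, and this is exactly what rescues the lemma: $\mathcal{I}^{-1}$ \emph{is} continuous at every point of its domain (it merely fails to be uniformly continuous, and fails to extend continuously to genuinely impulsive processes, which lie outside the image of $\mathcal{I}$). Concretely, for a feasible embedded competitor one has $\varphi^0(s)=t_1+s-\nu(s)$ (since $d\varphi^0/ds=1-|d\varphi/ds|=1-d\nu/ds$ and $\nu(0)=0$), so the distance in \eqref{close2} already controls $\|\varphi^0-\bar\varphi^0\|_{L^\infty}$. Because the reference is embedded, $d\bar\varphi^0/ds>0$ a.e., hence $\bar\varphi^0$ is strictly increasing and its inverse $\bar\sigma:=(\bar\varphi^0)^{-1}$ is uniformly continuous; writing $s=(\varphi^0)^{-1}(t)$ one gets $|\bar\varphi^0(s)-t|\le\|\varphi^0-\bar\varphi^0\|_{L^\infty}$ and so $|(\varphi^0)^{-1}(t)-\bar\sigma(t)|\le \omega_{\bar\sigma}\bigl(\|\varphi^0-\bar\varphi^0\|_{L^\infty}\bigr)$, i.e.\ uniform convergence of these strictly increasing bijections passes to their inverses. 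Combining this with $\|(y,\nu)-(\bar y,\bar\nu)\|_{L^\infty}\le\delta$ and the Lipschitz continuity of $(\bar y,\bar\nu)$ shows that $x=y\circ(\varphi^0)^{-1}$ and $v=\nu\circ(\varphi^0)^{-1}$ are uniformly close to $\bar x$ and $\bar v$. Hence small $(P_e)$-balls around $\mathcal{I}(\bar t_1,\bar t_2,\bar x,\bar v,\bar u)$, intersected with embedded feasible processes, pull back into small $(P)$-balls, and the converse follows by the same neighborhood--restriction argument as the direct implication --- no endpoint realization, and no controllability, is needed. Your ``relocated steep feature'' example does not contradict this: for a \emph{fixed} $W^{1,1}$ reference, shifting its steep arc by $\eta$ perturbs the strict--sense trajectory by at most $\mathrm{Lip}(\bar x)\,\eta\to 0$; genuine discontinuity of the inverse occurs only at impulsive limit processes, which is precisely why the full extended problem (as opposed to this lemma) requires the isolated--process and normality analysis of Section 4.
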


  \begin{remark}{\rm     
Although the question of existence  is not addressed in this paper, we point out that   the existence of an optimal extended sense feasible  process has  been established in several cases. For instance, when {\ either  $K<+\infty$ or $h(t_1, x_1,t_2,  x_2,v)\ge \psi(v)$ with $\displaystyle\lim_{v\to+\infty}\psi(v)=+\infty$} and the target is of the form $\T = \{(\bar t_1,\bar x_1,\bar t_2)\}\times \tilde\T$ for a given closed set $\tilde\T\subset\R^n$,  one can  establish existence  by compactness and the continuity --in   suitable topologies--  of the input-output map\footnote{Of course one has to assume that the set of feasible extended sense trajectories is non-empty.}(see, e.g.,  \cite{BR}, \cite{MR}, \cite{MiRu}, \cite{GS}).  These include so-called  {\it weakly coercive} problems, namely those problems where the cost  has the form $J(t_1,t_2,x,v,u)=\int_{t_1}^{t_2}[\ell_0(t,x(t))+\ell_1(t,x(t))|\dot u(t)|]\,dt$, with $\ell_1(t,x)\ge C$ for some constant $C>0$. }
 \end{remark}
\section{Necessary Conditions for the Extended Optimal Control Problem}\label{SNC}
This section provides necessary conditions, in the form of a Pontryagin Maximum Principle (PMP), for  extended sense  $L^{\infty}$ local minimizers.
\begin{theorem}
\label{PMPe}
Take an extended sense $L^{\infty}$ local minimizer for $(P_{e})$,
$(\bar S,\bar y^0 ,\bar  y ,\bar\nu, \bar \varphi^0 ,\bar \varphi )$. Assume hypothesis {\rm (H1) } is satisfied. Then the following conditions  are  verified: there exist $({p_0}, p)  \in W^{1,1}\left([0,\bar S];\R^{1
+n}\right)$  and real numbers  $\pi$, $\lambda$, with  $\pi\leq 0$ and $\lambda\geq 0$, such that
\begin{equation}\label{fe1}
(p_0 , p , \lambda) \not= (0, 0,0)\,,
\end{equation}
\begin{equation}\label{fe2}
\left\{
\begin{array}{l}
\displaystyle  \frac{dp_0}{ds} (s)\,=\,- p(s)\cdot \bigg( \frac{\partial f}{\partial t}(\bar y^0(s), \bar y(s))\frac{d\bar \varphi^0}{ds}(s) +  \sum_{j=1}^{m} \frac{\partial g_{j}}{\partial t} (\bar y^0(s), \bar y(s))\frac{d\bar\varphi^j}{ds}(s)\bigg)\;\; \mbox{ a.e. } s \in [0,\bar S]\,, \\
\displaystyle  \frac{dp}{ds} (s)\,=\,- p(s)\cdot \bigg( \frac{\partial f}{\partial x}  (\bar y^0(s), \bar y(s))\frac{d\bar \varphi^0}{ds}(s) +  \sum_{j=1}^{m} \frac{\partial g_{j}}{\partial x} (\bar y^0(s), \bar y(s))\frac{d\bar\varphi^j}{ds}(s)\bigg)\;\; \mbox{ a.e. } s \in [0,\bar S]\,,
\end{array}
\right.
\end{equation}
\begin{equation}\label{fe3}
\begin{array}{l}
\displaystyle p(s)\cdot \bigg(f(\bar y^0(s),\bar y(s))\frac{d\bar\varphi^0}{ds}(s) + \sum_{j=1}^mg_{j}(\bar y^0(s), \bar y(s))\frac{d\bar\varphi^j}{ds}(s)\bigg)
+ {p_0}(s)\,\frac{d\bar\varphi^0}{ds}(s) + \pi \left|\frac{d\bar\varphi}{ds}\right|(s) = \\ [1.5ex] 
\qquad\displaystyle \underset{(w^0,w) \in \CC}{\max}
\bigg\{ p(s)\cdot \bigg(f(\bar y^0(s),\bar y(s))w^0  +  \sum_{j=1}^m g_{j}(\bar y^0(s),\bar y(s))w^{j}\bigg) 
+
{p_0}(s)   \,w^0   + \pi|w|
\bigg\}\,=  0
\end{array}
\end{equation}
$\, \mbox{ a.e. }s \in [0,\bar S]$  and
\begin{equation}\label{fe4}
\begin{array}{l}
(p_0(0), p(0),-p_0(\bar S), -p(\bar S),-\pi) \in   \\ [1.5ex] 
\quad\lambda \nabla h(\bar y^0(0), \bar y(0),\bar y^0(\bar S), \bar y(\bar S),  \bar \nu(\bar S))+N_{{\mathcal{T}}\times[0,K]}(\bar y^0(0), \bar y(0),\bar y^0(\bar S), \bar y(\bar S),  \bar \nu(\bar S))\,.
\end{array}
\end{equation}
 Moreover, {additional `multiplier' information is available in the following special cases:}
 \begin{itemize}
 \item[(i)]  if {\, $\lambda\,\frac{\partial h}{\partial v}(\bar y^0(0), \bar y(0),\bar y^0(\bar S), \bar y(\bar S),  \bar \nu(\bar S))=0$\,  and  \,  $\bar\nu(\bar S) <K$} 
 then $$\pi =  0; $$  
  \item[(ii)]  if $\bar y^0(0)< \bar y^0(\bar S)$,  inequality \eqref{fe1} can be strengthened  to 
\begin{equation}\label{strongfe1}
  (p , \lambda) \not= (0,0). 
  \end{equation}  
   \end{itemize}
\end{theorem}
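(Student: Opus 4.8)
The plan is to recast $(P_{e})$ as a standard free-terminal-time Mayer problem with \emph{compact} control set and to apply a nonsmooth Pontryagin Maximum Principle; the bulk of \eqref{fe2}--\eqref{fe4} then follows routinely, and the genuinely new work lies in the refined non-triviality \eqref{fe1} together with items (i)--(ii). First I would regard $(y^0,y,\nu)$ as the state, $(w^0,w):=\big(\frac{d\varphi^0}{ds},\frac{d\varphi}{ds}\big)$ as a control ranging in the compact set $\CC$, and $S>0$ as a free terminal time. Since $\CC$ is compact, $f$ and the $g_j$ are $C^1$ with the linear growth (H1)(i), and we argue in an $L^{\infty}$-neighbourhood of the minimizer, the data may be localized to be Lipschitz, so the free-time maximum principle of \cite{Vinter} applies. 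It yields an adjoint arc $(p_0,p,q)\in W^{1,1}$ and a cost multiplier $\lambda\ge0$, not all trivial, with $q$ \emph{constant} because the right-hand sides are independent of $\nu$; setting $\pi:=q$ and using $y^0$ in the role of $t$ (so $\partial_{y^0}=\partial_t$, $\partial_y=\partial_x$) produces the adjoint system \eqref{fe2}, the Weierstrass maximization in \eqref{fe3}, and the transversality inclusion \eqref{fe4}. The $\nu(0)$-slot is absent from \eqref{fe4} because $h$ does not depend on $\nu(0)$ and the equality $\nu(0)=0$ leaves the corresponding multiplier unconstrained.

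The identity ``$\max=0$'' in \eqref{fe3} reflects the free terminal time: the problem is autonomous in $s$, so the maximized Hamiltonian is constant, and free-time transversality forces its value at $\bar S$, hence everywhere, to vanish. The sign $\lambda\ge0$ is standard. For $\pi\le0$ I would read the $\nu(\bar S)$-component of \eqref{fe4}, namely $-\pi\in\lambda\,\partial_v h+N_{[0,K]}(\bar\nu(\bar S))$: since $\lambda\ge0$ and $\partial_v h\ge0$ by (H1)(ii), the first summand is $\ge0$, while the only genuinely active endpoint bound on $\nu(\bar S)$ is the upper one $\nu(\bar S)\le K$ (the bound $\nu(\bar S)\ge0$ being automatic, as $\nu$ is non-decreasing from $\nu(0)=0$), whose normal-cone term lies in $[0,\infty)$; hence $-\pi\ge0$. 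Item (i) follows at once, since if $\lambda\,\partial_v h=0$ and $\bar\nu(\bar S)<K$ the upper bound is inactive and both summands vanish, forcing $\pi=0$.

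To upgrade the standard non-triviality $(\lambda,p_0,p,\pi)\neq0$ to \eqref{fe1}, suppose instead $p_0\equiv0$, $p\equiv0$ and $\lambda=0$; then necessarily $\pi\neq0$, so $\pi<0$. With $p_0=p=0$ the maximization \eqref{fe3} collapses to $\pi\,|\frac{d\bar\varphi}{ds}|=\max_{\CC}\pi|w|=0$, forcing $\frac{d\bar\varphi}{ds}=0$ a.e., whence $\bar\nu\equiv\bar\nu(0)=0$ and in particular $\bar\nu(\bar S)=0<K$. The constraint $\nu(\bar S)\le K$ is then inactive, so the $\nu(\bar S)$-slot of \eqref{fe4} gives $-\pi=\lambda\,\partial_v h=0$, i.e. $\pi=0$, contradicting $\pi<0$; here the hypothesis $K>0$ is essential.

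For item (ii), i.e. \eqref{strongfe1}, assume $\bar y^0(0)<\bar y^0(\bar S)$ and, for contradiction, $p\equiv0$, $\lambda=0$. Writing $w^0=1-|w|$ one computes $\max_{\CC}\big(p_0w^0+\pi|w|\big)=\max(p_0,\pi)$; equating this to $0$, using $\pi\le0$ to rule out $p_0>0$, and invoking \eqref{fe1} to exclude $p_0=0$, leaves $p_0<0$ and $\pi=0$. Evaluating the left-hand side of \eqref{fe3} at the optimal control then gives $p_0\,\frac{d\bar\varphi^0}{ds}=0$ a.e., so $\frac{d\bar\varphi^0}{ds}=0$ a.e. and $\bar y^0(\bar S)=\bar y^0(0)$, a contradiction. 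I expect the principal difficulty to be exactly this non-triviality refinement \eqref{fe1}: the classical principle only guarantees $(\lambda,p_0,p,\pi)\neq0$, and eliminating $\pi$ requires combining the degenerate Weierstrass condition with the boundary geometry of the $\nu$-constraint at $\bar\nu(\bar S)=0$; item (ii) then rests on \eqref{fe1} in an essential way.
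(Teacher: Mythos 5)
Your proposal is correct and follows essentially the same route as the paper: apply the standard free end-time Maximum Principle to $(P_{e})$, obtain \eqref{fe2}--\eqref{fe4} with the usual non-triviality $(p_0,p,\pi,\lambda)\neq(0,0,0,0)$, and then refine it (sign of $\pi$, item (i), \eqref{fe1}, item (ii)) by combining the degenerate form of the maximization \eqref{fe3} with the $\nu$-component of the transversality condition \eqref{fe4}. The only, immaterial, difference is in the proof of \eqref{fe1}: the paper deduces $\bar\nu(\bar S)=K$ from transversality and then integrates \eqref{fe3} to reach the contradiction $\pi K=0$, whereas you use \eqref{fe3} first to force $\frac{d\bar\varphi}{ds}=0$ a.e., hence $\bar\nu(\bar S)=0<K$, and then get the contradiction from transversality --- a mirror image of the same two-ingredient argument.
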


\begin{remark}{\rm The standard PMP applied to the extended problem tells us that there exists a non-trivial multiplier set $({p_0} , p , \pi, \lambda)$ satisfying conditions \eqref{fe2}-\eqref{fe4}. The novelty of  Thm.  \ref{PMPe} consists in  the stronger non-triviality condition \eqref{fe1} and
the  additional information concerning the multipliers non-triviality   in some cases of interest. 
 }
\end{remark}

\begin{proof}[Proof of  Thm.   \ref{PMPe}]    $(P_e )$ is a standard  optimal control problem, to which   the  `free end-time' PMP  is applicable (see, e.g., \cite[Thm. 8.7.1]{Vinter}), with reference to the   
$L^{\infty}$ local extended sense minimizer
$(\bar S,\bar y^0 ,\bar  y ,\bar \varphi^0 ,\bar \varphi )$. This
yields the existence of   $({p_0} , p )  \in W^{1,1}\left([0,\bar S];\R^{1
+n}\right)$ and  $\pi\in\R$, $\lambda\ge0$ satisfying \eqref{fe2}--\eqref{fe3}, the transversality conditions   \eqref{fe4}
 and the  non-triviality  condition
\begin{equation} 
\label{nontriv1}
(p_0 ,p ,  \pi,\lambda)\not= 0\,.
\end{equation}  
Since  {  $\frac{\partial h}{\partial v}(\bar y^0(0), \bar y(0),\bar y^0(\bar S), \bar y(\bar S),  \bar \nu(\bar S))\ge0$ and}  $N_{[0,K]}(v)=\{0\}$  for $v<K$, $N_{[0,K]}(K)= {[0,+\infty)}$, it follows from  \eqref{fe4} that $\pi= 0$ as soon as {  $\lambda\frac{\partial h}{\partial v}(\bar y^0(0), \bar y(0),\bar y^0(\bar S), \bar y(\bar S),  \bar \nu(\bar S))=0$ and    $\bar\nu(\bar S) <K$},  while $\pi\le 0$ in the other cases.   So the proof concerning the sign of $\pi$, and also relation (i),  is  complete.
\vsm

\noindent
 Next, we show  that $(p_0 , p , \lambda)\ne(0,0,0)$.  Indeed, if this were not true, it would follow from (\ref{nontriv1}) that $\pi \not= 0$. But then  $\bar\nu(\bar S) =K$. Integrating the first equation in \eqref{fe3},  one obtains 
$\pi K =0$. This is not possible, since $K>0$.
\vsm

\noindent
Finally, we consider relation (ii). Suppose then that {$\bar y^{0}(0)< \bar y^{0}(S)$}. We must show that  $(p , \lambda) \not= (0,0)$.  If this were not true, we would be able to deduce from {(\ref{fe2}) and (\ref{fe3})} that 
{$p_{0}(\cdot)$ is a constant function and}

$$
 \displaystyle  {p_0}(s)\,\frac{d\bar\varphi^0}{ds}(s) + \pi \left|\frac{d\bar\varphi}{ds}(s)\right| = \underset{(w^0,w) \in \CC}{\max}
\bigg\{ 
{p_0}(s)   \,w^0   + \pi|w|
\bigg\}\,=  0\, \mbox{ a.e. }s \in [0,\bar S].
$$
If $\pi<0$, it would follow from this relation that $p_0(s) = 0$ for all $s\in [0,\bar S]$. This cannot be true since, as we have shown,  
$(p_0 , p , \lambda)\ne(0,0,0)$. If, on the other hand, $\pi=0$, it would follow from the preceding relation that  $p_0(s)<0 $ $ \mbox{ a.e. }s \in [0,\bar S]$. But then we would have $\frac{d\bar\varphi^0}{ds}(s) =0$  
a.e.,
which would imply
$$\bar y^0(S)-\bar y^0(0) =\int_0^{\bar S} \frac{d\bar\varphi^0}{ds}(s)\,ds =0.
$$
This is not possible since, as we have assumed,  $\bar y^0(0)< \bar y^0(\bar S)$. From this contradiction we deduce relation (ii).
\end{proof}

   \section{`No Infimum Gap' and Normality }\label{SNGN}
     
 Write $J(t_1,t_2, x , v,  u )$ for the cost of a strict sense process $(t_1,t_2, x , v, u )$ in problem $(P)$, namely,
$$J(t_1,t_2, x , v,  u ):= h(t_1,x(t_1),t_2, x(t_2) , v(t_2)),$$
 and 
 $J_{e}(S,y^0 , y , \nu, \varphi^0 ,\varphi )$ for the cost of an extended sense process $(S,y^0 , y , \nu, \varphi^0 ,\varphi )$ in problem $(P_{e})$:
 $$J_{e}(S,y^0 , y , \nu, \varphi^0 ,\varphi ):= h(y^0(0), y(0),y^0(S), y(S) ,\nu(S)).$$
 Let us also write ${{\cal A}}$ and ${{\cal A}}_{e}$  for the class of feasible strict sense processes (in problem $(P)$) and  for  the class of feasible extended sense processes (in problem $(P_{e})$), respectively.  
  
  \begin{definition}\label{NiG} We shall say that   
  \begin{itemize}
  \item[{\rm (i)}]
 there is  {\em no  infimum gap} if 
\bel{gap}
\begin{array}{l}
\inf \Big\{J_{e}(S,y^0 , y , \nu, \varphi^0 ,\varphi )\,|\, (S,y^0 , y , \nu, \varphi^0 ,\varphi ) \in {{\cal A}}_{e}\Big\} \\[1.5ex]
\qquad\qquad\qquad\qquad\qquad\qquad \qquad =  \inf \{J(t_1,t_2,x , v, u )\,|\, (t_1,t_2, x , v,   u ) \in {{\cal A}}\}.
\end{array}
\eeq

 \end{itemize}

Furthermore, if $ (\bar S,\bar y^0 ,\bar y , \bar\nu, \bar\varphi^0 ,\bar\varphi )$ is an  extended sense $L^{\infty}$ local  minimizer, we shall say that 
\begin{itemize}
\item[{\rm (ii)}] there is  {\em no local infimum   gap at $(\bar S,\bar y^0 ,\bar y , \bar\nu, \bar\varphi^0 ,\bar\varphi )$}    if, for some $\delta>0$, 
 \begin{eqnarray}\label{uguale}
\nonumber
 J_{e}(\bar S,\bar y^0 ,\bar y , \bar\nu, \bar\varphi^0 ,\bar\varphi )  =\,  \inf \Bigg\{J(t_1,t_2,x , v, u )\,|\,\,\,\, (t_1,t_2, x , v,   u ) \in B_\delta\Big[(\bar S,\bar y^0 ,\bar y , \bar\nu, \bar\varphi^0 ,\bar\varphi ) \Big]\Bigg\}
 \end{eqnarray} 
where we have set 
$$B_\delta\Big[(\bar S,\bar y^0 ,\bar y , \bar\nu, \bar\varphi^0 ,\bar\varphi ) \Big] := 
\Bigg\{\begin{array}{l}(t_1,t_2, x , v,   u ) \in {{\cal A}}\,\,|\,\, (S,y^0 , y , \nu, \varphi^0 ,\varphi )= {\cal I} (t_1,t_2,x , v, u )
 \\
\qquad \hbox{and }   d_{\infty} \Big((\bar y^0(0) ,\bar y^0(\bar S), \bar y, \bar\nu),(y^0(0) , y^0(S), y, \nu)\Big) \,<\,\delta\end{array}\Bigg\}.$$
 \end{itemize}
 \end{definition}      

 { To prove Theorem \ref{cor} below, it will be convenient to introduce the subset ${{\cal A}}^+_{e}\subset {{\cal A}}_{e}$ defined by
 $$
{{\cal A}}^+_{e}:=\left\{ (S,y^0 , y , \nu, \varphi^0 ,\varphi )\,\,|\,\,   (S,y^0 , y , \nu, \varphi^0 ,\varphi )={\cal I} (t_1,t_2,x , v, u )  \text{ and }    (t_1,t_2,x , v, u )\in {{\cal A}}\right\}.
$$
Using the same notation of Def. \ref{NiG},  by   Lemma \ref{Min=} we have:
$$
 \inf \{J(t_1,t_2,x , v, u )\,|\, (t_1,t_2, x , v,   u ) \in {{\cal A}}\}= \inf \{J_e(S,y^0 , y , \nu, \varphi^0 ,\varphi  )\,|\, (S,y^0 , y , \nu, \varphi^0 ,\varphi ) \in {{\cal A}}_e^+\} 
 $$
 and
 $$
 \begin{array}{l}
  \inf \Big\{J(t_1,t_2,x , v, u )\,|\,\,\,\, (t_1,t_2, x , v,   u ) \in B_\delta\big[(\bar S,\bar y^0 ,\bar y , \bar\nu, \bar\varphi^0 ,\bar\varphi ) \big]\Big\} \\[1.5ex] 
\qquad\qquad\qquad= \inf \{J_e(S,y^0 , y , \nu, \varphi^0 ,\varphi  )\,|\, (S,y^0 , y , \nu, \varphi^0 ,\varphi ) \in {{\cal A}}_e^+  \text{ and } \\[1.5ex]  
\qquad\qquad\qquad  \qquad\qquad\qquad d_{\infty} \Big((\bar y^0(0) ,\bar y^0(\bar S), \bar y, \bar\nu),(y^0(0) , y^0(S), y, \nu)\Big) \,<\,\delta\}.
\end{array}
 $$
  }

 \begin{theorem}
 \label{cor} Assume Hypothesis
{\rm (H1)} is satisfied.
\begin{itemize}
\item[{\rm (i)}] Suppose that there exists an $L^{\infty}$  minimizer for $(P_{e})$ which is a normal extremal.  Then there is no infimum gap.
\item[{\rm (ii)}] Take an extended sense $L^{\infty}$  local minimizer $(\bar S,\bar y^0 ,\bar y , \bar\nu, \bar\varphi^0 ,\bar\varphi )$   for $(P_{e})$. Suppose that $(\bar S,\bar y^0 ,\bar y , \bar\nu, \bar\varphi^0 ,\bar\varphi )$ is a normal extremal. Then there is no local infimum gap at \linebreak $(\bar S,\bar y^0 ,\bar y , \bar\nu, \bar\varphi^0 ,\bar\varphi )$.
\end{itemize}

  \end{theorem}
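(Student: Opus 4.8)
The plan is to derive both statements as contrapositives of Theorem \ref{mainextended}, which asserts that an isolated extended-sense extremal cannot be normal. The first thing I would record is a reformulation of the ``no gap'' property in terms of the embedded class $\mathcal{A}_e^+$. Since $\mathcal{A}_e^+\subseteq\mathcal{A}_e$ one always has $\inf_{\mathcal{A}_e}J_e\le\inf_{\mathcal{A}_e^+}J_e$, while Lemma \ref{Min=} (and the identity recorded just after Definition \ref{NiG}) gives $\inf_{\mathcal{A}}J=\inf_{\mathcal{A}_e^+}J_e$. Hence the no-gap equality \eqref{gap} is equivalent to $\inf_{\mathcal{A}_e}J_e=\inf_{\mathcal{A}_e^+}J_e$, and similarly the local no-gap statement in Definition \ref{NiG}(ii) is equivalent to equality between $J_e(\bar S,\bar y^0,\bar y,\bar\nu,\bar\varphi^0,\bar\varphi)$ and the infimum of $J_e$ over the processes of $\mathcal{A}_e^+$ lying within $\delta$ of the minimizer in the $d_\infty$ metric.

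For part (i), let $(\bar S,\bar y^0,\bar y,\bar\nu,\bar\varphi^0,\bar\varphi)$ be the normal $L^{\infty}$ minimizer for $(P_e)$; by Theorem \ref{PMPe} it is an extremal, and normality means that every associated multiplier set has $\lambda>0$. I would argue by contradiction: suppose an infimum gap were present. Then the minimizing value $J_e(\bar S,\dots)=\inf_{\mathcal{A}_e}J_e$ would be \emph{strictly} smaller than $\inf_{\mathcal{A}_e^+}J_e$, so every embedded strict-sense feasible process would have cost bounded below, away from $J_e(\bar S,\dots)$. This strict separation of the minimizer's cost from the costs attained along the embedded class is exactly the isolation hypothesis of Theorem \ref{mainextended}. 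That theorem then produces an abnormal multiplier set (one with $\lambda=0$) for the minimizer, contradicting normality. Hence no infimum gap can occur.

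Part (ii) follows the same route with all quantities localized to the ball $B_\delta[\,\cdot\,]$ of Definition \ref{NiG}. If a local infimum gap were present at the normal local minimizer, its value would be strictly below the infimum of $J_e$ over the embedded processes that are $d_\infty$-close to it; this local strict separation again realizes the (local) isolation hypothesis, and Theorem \ref{mainextended} supplies an abnormal multiplier, contradicting the assumed normality.

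The step I expect to be the main obstacle is not the contrapositive bookkeeping but the precise matching of the gap inequality with the isolation notion used in Theorem \ref{mainextended}: one must verify that a strict cost separation of the minimizer from the embedded strict-sense class (globally, or within the $d_\infty$-neighborhood) genuinely places the extremal in the ``isolated'' category to which that theorem applies. Constructing the separating neighborhood in the endpoint/state variables out of the cost gap is where the real content lies, and I would expect the monotonicity of $v\mapsto h$ in (H1)(ii), together with the constraint $\nu(S)\le K$, to be needed here so that a cost decrease cannot be manufactured merely by adjusting the total-variation variable $\nu$. Once the isolation property is established, the conclusion is immediate from Theorem \ref{mainextended}.
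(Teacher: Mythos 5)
Your overall strategy coincides with the paper's: both proofs reduce everything to Theorem \ref{mainextended}, and your contrapositive organization (gap $\Rightarrow$ isolated $\Rightarrow$ abnormal) is logically equivalent to the paper's (normal $\Rightarrow$ not isolated $\Rightarrow$ sequence of close feasible embedded processes $\Rightarrow$ no gap); the bookkeeping via $\mathcal{A}_e^+$, and your direct treatment of (i) rather than deducing it from (ii) as the paper does, are both fine. However, there is a genuine gap at exactly the step you flag, and your guess about how to close it is wrong. A strict cost separation between the minimizer and the embedded class is \emph{not} ``exactly the isolation hypothesis'': isolation (Definition \ref{IP}) is a statement about $d_\infty$-proximity of \emph{trajectories}, not of costs, so the implication ``cost gap $\Rightarrow$ isolated'' requires an argument. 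That argument is short, but it is essentially the whole content of the paper's proof: if the minimizer were not isolated, there would exist a sequence of \emph{feasible} embedded strict sense processes $(S_i, y_i^0, y_i, \nu_i, \varphi_i^0, \varphi_i)$ whose $d_\infty$-distance to the minimizer tends to zero; by the definition of $d_\infty$ (trajectories extended constantly outside their domains) the endpoint data $\big(y_i^0(0), y_i(0), y_i^0(S_i), y_i(S_i), \nu_i(S_i)\big)$ then converge to those of the minimizer, hence by \emph{continuity of $h$} the costs $J_e$ converge to the minimizer's cost, contradicting the strict separation by a positive constant $c$. Equivalently, in your direction: the cost gap plus continuity of $h$ forces every feasible embedded process to remain $d_\infty$-far from the minimizer, which is isolation. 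Without this continuity argument your proof does not go through, since cost proximity and trajectory proximity are a priori unrelated.

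Your proposed ingredients for closing this step --- the monotonicity of $v\mapsto h$ from (H1)(ii) and the constraint $\nu(S)\le K$ --- are not what is needed and in fact play no role here. The monotonicity hypothesis enters only in the proof of Theorem \ref{PMPe}, where it fixes the sign of the multiplier $\pi$; the total variation constraint does not have to be handled separately because the notion of isolation already quantifies only over \emph{feasible} embedded processes, i.e., those that satisfy $\nu(S)\le K$ and the endpoint constraint. Hence no scenario of ``manufacturing a cost decrease by adjusting $\nu$'' needs to be excluded: the single analytic fact required is that $d_\infty$-convergence of feasible embedded processes implies convergence of their costs, which is immediate from the continuity (indeed $C^1$ regularity) of $h$ assumed in (H1)(ii).
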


 Theorem \ref{cor}  will be proved in Subsection \ref{secpr} as a consequence  of Theorem \ref{mainextended} below, which relates `isolated processes' and normality.
 
\subsection{Isolated Extended Sense Feasible Processes}

  Notice  that,  on the one hand, the trajectory of any extended  sense process $(\bar S,\bar y^0 ,\bar  y ,\bar\nu, \bar \varphi^0 ,\bar \varphi )$ can be approximated by the trajectory of  an embedded strict sense process, that is,  for any $\delta >0$ we can find an embedded strict sense process $(S, y^0 ,  y , \nu, \varphi^0 , \varphi )$ 
such that {
\begin{equation}
\label{eclose1}
d_{\infty} \Big((\bar y^0(0) ,\bar y^0(\bar S), \bar y, \bar\nu),(y^0(0) , y^0(S), y, \nu)\Big) \,<\,\delta\,,
\end{equation}
where $d_{\infty}$ is the metric defined in \eqref{dinfty}.
On the other hand, the presence of endpoint constraints could make such an approximation  unachievable, if we keep the requirement that  approximating embedded strict sense processes  have to be {\it feasible} as well \,\footnote{ Actually, such  a requirement seems minimal if one wishes feasible extended processes to retain the physical meaning of limits of actual --i.e. strict sense feasible-- processes.}. 
This is because the perturbation that changes  $(\bar S,\bar y^0 ,\bar  y ,\bar\nu, \bar \varphi^0 ,\bar \varphi )$ into  a close embedded strict sense process  might violate  either the end-point constraints or the total variation bound. The phenomenon is captured in the following definition (of topological nature):

\begin{definition}\label{IP} {\em (Isolated feasible extended sense processes)}
We say that a feasible extended sense process $(\bar S,\bar y^0 ,\bar  y , \bar\nu, \bar \varphi^0 ,\bar \varphi )$ is {\em isolated} if, for some $\delta>0$, there does not exist a feasible embedded strict sense process 
$(S,{y^0} , y , \nu, \varphi^0 ,\varphi )$ such that  \eqref{eclose1} is satisfied. 
\end{definition}
A feasible extended sense process having the properties described in this definition is called `isolated' because the state trajectory component $(\bar y^{0},\bar y,\bar\nu) :[0,\bar S] \rightarrow \R\times\R^n\times\R$ is not in the closure (w.r.t. the metric defined by the left side of  \eqref{eclose1}) of the set of state trajectories arising from embedded strict sense processes, whose endpoints lie in the target set and satisfy the total variation constraint.
\vsm

In this section we give a necessary condition for  a feasible extended sense process to be isolated. The relevance  of this condition, which will be explored in subsequent sections,  is the insights 
 that it will provide into possible differences between the infimum cost of the optimal control problem $(P)$ and its extension $(P_{e})$. 
 \begin{definition} {\em (Normal and abnormal extremals)} A feasible extended sense
 process  \linebreak $(\bar S,\bar y^0 ,\bar  y , \bar\nu, \bar \varphi^0 ,\bar \varphi )$,  is said to be an {\em extended sense extremal}  if there exists a set of multipliers $(p_0 ,p ,  \pi,\lambda)$ such  that $(p_0 ,p , \pi,\lambda)$ and
   $(\bar S,\bar y^0 ,\bar  y , \bar\nu, \bar \varphi^0 ,\bar \varphi )$    satisfy the conditions listed in  Thm.  \ref{PMPe}.  An extended sense extremal $(\bar S,\bar y^0 ,\bar  y , \bar\nu, \bar \varphi^0 ,\bar \varphi )$ is said to be {\rm  normal}
   if the only possible choices of multiplier sets $(p_0 ,p , \pi,\lambda)$ associated with it are such that $\lambda >0$. If there exists at least one set of multipliers such that $\lambda =0$, we say that the  extended sense extremal is {\rm abnormal}.
\end{definition}
 
\noindent
The following result relates isolated  extended sense processes and abnormality, from which  the main result (Theorem \ref{cor}) will follow.
 \begin{theorem}\label{mainextended}  If a  feasible extended sense process  $(\bar S,\bar y^0 ,\bar  y , \bar\nu, \bar \varphi^0 ,\bar \varphi )$ is isolated,  then  it is an abnormal extremal for $(P_{e})$. \end{theorem}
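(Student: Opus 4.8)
The plan is to deduce, from the isolation hypothesis alone, a set of multipliers $(p_0,p,\pi,\lambda)$ meeting every requirement of Theorem \ref{PMPe} but with $\lambda=0$; exhibiting such a set shows at once that the process is an \emph{extremal} and that it is \emph{abnormal}. The engine is a separation argument whose only delicate ingredient is a topological (Brouwer degree) open-mapping step. First I would encode feasibility through an endpoint map. Write $e:=(y^0(0),y(0),y^0(\bar S),y(\bar S),\nu(\bar S))\in\R^{2n+3}$ for the endpoint data of an extended sense process, $\bar e$ for that of the reference process $(\bar S,\bar y^0,\bar y,\bar\nu,\bar\varphi^0,\bar\varphi)$, and $D:=\T\times[0,K]\subset\R^{2n+3}$ for the feasible endpoint set. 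Let $\mathcal R_\delta$ denote the set of endpoints $e$ attainable by \emph{embedded strict sense} processes (equivalently, by Lemma \ref{characterization}, those with $\frac{d\varphi^0}{ds}>0$ a.e.) whose trajectory lies within $\delta$ of $(\bar y^0,\bar y,\bar\nu)$ in the metric \eqref{dinfty}. Feasibility gives $\bar e\in D$, and since embedded strict sense processes are $C^0$-dense among extended sense ones, $\bar e\in\overline{\mathcal R_\delta}$ for every $\delta$. In this language, isolation (Definition \ref{IP}) is exactly the statement that $\mathcal R_\delta\cap D=\emptyset$ for some $\delta>0$.

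Second, I would linearize the endpoint map along $(\bar\varphi^0,\bar\varphi)$. Using finitely many convex/needle variations of the control that stay inside $\CC$ and preserve $\frac{d\varphi^0}{ds}>0$ (so the varied processes remain in $\mathcal A_e^+$), and transporting their first-order effect to $s=\bar S$ by the variational equation associated with \eqref{fe2}, I would assemble a convex cone $\mathcal K\subseteq\R^{2n+3}$ of attainable endpoint directions, realised as the one-sided derivative cone of a Lipschitz map $F\colon\Sigma\to\R^{2n+3}$ on a finite-dimensional simplex with $F(0)=\bar e$. Noncommutativity of the fields $g_1,\dots,g_m$ enters only in how each elementary variation is pulled back along the reference flow, and does not disturb the convexity of $\mathcal K$.

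Third, the crux is the topological step. I would show that if the constraint qualification $\mathcal K\cap\mathrm{int}\,\mathrm{T}_D(\bar e)\neq\emptyset$ held, with $\mathrm{T}_D(\bar e)$ the tangent cone to $D$, then composing $F$ with a local retraction onto $D$ and computing a Brouwer degree would force the attainable set $F(\Sigma)$ to cover a full neighbourhood of $\bar e$ relative to $D$; in particular some embedded strict sense process within $\delta$ of the reference would be feasible, contradicting $\mathcal R_\delta\cap D=\emptyset$. Hence isolation forces $\mathcal K\cap\mathrm{int}\,\mathrm{T}_D(\bar e)=\emptyset$, and a Hahn--Banach separation of the convex cones $\mathcal K$ and $\mathrm{T}_D(\bar e)$ delivers a nonzero covector $\eta=(\eta_0^-,\eta^-,\eta_0^+,\eta^+,\eta_\nu)$ with $\eta\cdot d\le 0$ on $\mathcal K$ and $\eta\in N_D(\bar e)$.

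Finally I would read off the multipliers. Setting $\lambda=0$, $(-p_0(\bar S),-p(\bar S))=(\eta_0^+,\eta^+)$, $-\pi=\eta_\nu$, and solving \eqref{fe2} backward with terminal data $(p_0(\bar S),p(\bar S))$, the separation inequality $\eta\cdot d\le 0$ over $\mathcal K$ becomes precisely the maximisation identity \eqref{fe3}; the membership $\eta\in N_D(\bar e)=N_{\T}(\cdot)\times N_{[0,K]}(\bar\nu(\bar S))$ gives transversality \eqref{fe4} with $\lambda=0$, and, since $N_{[0,K]}(\bar\nu(\bar S))\subseteq[0,+\infty)$, the sign $\pi\le 0$. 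The nontriviality \eqref{fe1}, i.e. $(p_0,p)\neq 0$, then follows exactly as in the proof of Theorem \ref{PMPe}: were $(p_0,p)\equiv 0$ one would have $\pi\neq 0$, hence $\bar\nu(\bar S)=K$, and integrating \eqref{fe3} would give the impossible $\pi K=0$. This yields the desired abnormal multiplier set. I expect the principal obstacle to be the third step: making the degree/open-mapping argument rigorous while simultaneously restricting to strict sense controls $\frac{d\varphi^0}{ds}>0$, keeping the variations inside $\CC$ and inside the $\delta$-tube, and controlling the non-commutative reference flow so that $F$ has the asserted derivative cone and nonzero degree.
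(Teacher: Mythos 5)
Your architecture (attainable cone, open mapping, separation, then reading off an abnormal multiplier set) is the classical Boltyanski--Neustadt--Warga route, and your final bookkeeping (sign of $\pi$, nontriviality via the $\pi K=0$ contradiction) matches the paper. But there is a genuine gap at the heart of the construction, and it is not merely the technical obstacle you flag at the end. Under the hypothesis of the theorem, the reference process cannot be an embedded strict sense process: if it were, it would itself be a feasible embedded strict sense process within every $\delta$ of itself, contradicting Definition \ref{IP}. Hence $\frac{d\bar\varphi^0}{ds}=0$ on a set of positive measure, and your premise that needle/convex variations of $(\bar\varphi^0,\bar\varphi)$ ``preserve $\frac{d\varphi^0}{ds}>0$ (so the varied processes remain in $\mathcal{A}_e^+$)'' is vacuous---there is nothing to preserve. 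Needle variations alter the control only on small sets, so every varied process still has $\frac{d\varphi^0}{ds}=0$ where the reference does, and never lies in $\mathcal{A}_e^+$. Consequently your cone $\mathcal K$ is realized by \emph{extended} sense processes, and the degree/covering step can only produce extended sense feasible endpoints near $\bar e$---which exist trivially ($\bar e$ itself)---not the strict sense ones needed to contradict isolation. Repairing this forces a restructuring: first replace the reference by strict sense approximants (the paper does this by constraining controls to $(1-\epsilon_i)(\C\cap \B_m)$, which makes $\frac{d\varphi^0}{ds}\geq\epsilon_i$ automatic), build variation cones at the perturbed endpoints $\hat e_i\to\bar e$, run the open-mapping argument uniformly in $i$, and pass to the limit in normalized separating covectors. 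That diagonal argument is the real content of the proof, and your proposal defers it rather than supplies it.

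A second, smaller gap: $\T$ is only assumed closed, so $D=\T\times[0,K]$ need not be convex or regular; its tangent cone can be nonconvex with empty interior, so ``Hahn--Banach separation of $\mathcal K$ and $\mathrm{T}_D(\bar e)$'' is not available as stated, and even when a separating covector exists it lies in the polar of a tangent cone, which must still be related to the \emph{limiting} normal cone appearing in \eqref{fe4}. The paper's proof sidesteps both difficulties simultaneously: it penalizes constraint violation by $\phi=\max\{d_{\mathcal T},(\nu-K)\vee 0\}$, applies Ekeland's principle to the free end-point problems $(\hat P_i)$ (whose admissible processes are automatically embeddable strict sense ones), invokes the standard PMP there with cost multiplier $1$, uses isolation only to guarantee $\phi>0$ at the approximate minimizers---which forces the subgradients of $\phi$ to have unit size, hence nondegenerate limits---and then passes to the limit along the reference. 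Abnormality ($\lambda=0$) is automatic in that scheme because the true cost $h$ never enters the penalized problems, whereas in your scheme it must be extracted from the separation, which is sound in principle but rests on the unconstructed cone $\mathcal K$.
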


\begin{proof}  
\noindent  
We may assume, without loss of generality, 
 that   (H1)\,(i)   is replaced by the  stronger hypothesis:
 
\begin{itemize}
\item[{\bf (H1)*}: (i)]  the vector fields  $f :\R\times\R^{n} \rightarrow \R^{n}$, $g_{j} :\R\times \R^{n} \rightarrow \R^{n},\,j=1,\ldots,m$  are of class   $C^{1}$, Lipschitz continuous and bounded.  
\end{itemize}  
 
\noindent (If only (H1)\,(i)  is satisfied, we consider an optimal control problem related to (P), in which   $f $ and the $g_{j} $'s are modified outside a ball in $\R\times\R^{n}$ containing Graph$\{(\bar y^0 , \bar y  )\}$ in its interior, by truncation and mollification. 
Since the analysis involves consideration    of extended sense trajectories with graphs arbitrarily close to Graph$\{(\bar y^0 , \bar y )\}$ in the Hausdorff sense and the relations appearing in the statement of the theorem concern properties of the data  `near' Graph$\{(\bar y^0 , \bar y  )\}$, it suffices to prove the assertions for only the modified problem.)
\vsm

\noindent
Let $(\bar S,\bar y^0 ,\bar  y , \bar\nu, \bar \varphi^0 ,\bar \varphi )$ be an arbitrary isolated  extended sense feasible process. Define  the map
$\phi:\R\times\R^{n}\times\R\times \R^{n}\times\R \to \R$, given by
\begin{equation}
\label{phi}\begin{array}{l}
\phi (  y_0^0,y_0,  y^0_1, y_1, v)
\colonequals 
\max\big\{
d_{\mathcal{T}}(y_0^0,y_0, y^0_1, y_1),\, (v-K)\vee 0\big\}\,.\end{array}
\end{equation}  
Take a positive sequence $(\epsilon_{i})$  such that $\epsilon_{i}\searrow 0$. {For} each  $i\in\N$, let us consider the free end-point optimal control problem
$$
(\hat P_{i})
 \left\{
\begin{array}{l}
 \qquad\qquad\qquad\quad\quad\mbox{ Minimize }\  \phi \bigg(y^0(0), y(0), y^0(\bar S), y(\bar S),  \nu(\bar S)\bigg)
\\ [1.5ex]
\displaystyle\mbox{over } ({y^0}, y, \nu)  \in W^{1,1}([0,\bar S];\R\times \R^{n}\times\R) \mbox{ and  measurable   functions}  
\,d \,, w 
\displaystyle\mbox{ satisfying }
\\ [1.5ex]
\displaystyle  \frac{d{y^0}}{ds} (s)\,=\,  {(1+ d(s))}(1-|w(s)|) \mbox{ a.e. } s \in [0,\bar S]\,,
\\ [1.5ex]
\displaystyle \frac{dy}{ds} (s)\,=\, (1+ d(s)) \bigg( f(y^0(s), y(s))(1-|w(s)|) + \sum_{j=1}^{m}g_{j}(y^0(s),y(s))w^j(s)\bigg) \mbox{ a.e. } s \in [0,\bar S]\,,
\\ [1.5ex]
\displaystyle  \frac{d{\nu}}{ds} (s)\,=\,  (1+ d(s))|w(s)| \,,\mbox{ a.e. } s \in [0,\bar S]\,,
\\ [1.5ex]
\displaystyle  w(s)\in (1-\epsilon_{i})(\mathcal{C}\cap \B_m)
 \,,\mbox{ a.e. } s \in [0,\bar S]\,,
\\ [1.5ex]
 \displaystyle  d(s) \in [-0.5, +0.5] \,, \mbox{ a.e. } s \in [0,\bar S]\,,
\\ [1.5ex]
  \quad k(0)=0
\end{array}
\right.$$
We call a collection of functions $({y^0} , y ,   \nu , d , w )$  satisfying the constraints in this problem
 a {\it  feasible process for problem $(\hat P_{i})$}.  (Notice that, to simplify the notation, we use here $w$ for the $s$-derivative of $\varphi$, and express the variable $(w^0,w)$  satisfying the constraint $(w^0,w)\in \CC$, as $(1-|w|,|w|)$ where  $w \in (1-\epsilon_{i})(\mathcal{C}\cap \B_{m})$).
 \vsm

\noindent
  For every $i\in\N$,  let $(\bar S,\hat y^0_{i} , \hat y_{i} ,  \hat \nu_{i} , \hat\varphi^0_{i}  ,\hat\varphi_{i} )$ be the extended sense process  in which  $$(\hat y^0_{i},\hat y_{i},  \hat \nu_{i} , \hat\varphi^0_{i},\hat\varphi_{i} )(0)  = (\bar y^0, \bar y, \bar\nu,  \bar\varphi^0,\bar\varphi)(0)$$  and, 
for a.e. $s\in[0, \bar S]$,  
$$ 
\left( \frac{d \hat\varphi^0_i}{ds} , \frac{d \hat\varphi_i}{ds} \right)  (s) \,:=\left\{
 \begin{array}{l} \left(\epsilon_i, (1- \epsilon_i)\, \frac{d \bar\varphi}{ds}(s)\big/\big|\frac{d \bar\varphi}{ds}(s)\big| \right) \ \text{if $ \frac{d \bar\varphi^0}{ds}(s) <\epsilon_i$,} \\ \, \\
 \left( \frac{d \bar\varphi^0}{ds}(s) , \frac{d \bar\varphi}{ds}(s)\right)  \ \text{if $ \frac{d \bar\varphi^0}{ds}(s) \ge\epsilon_i$. }
 \end{array}\right. 
 $$
Notice that for each $i\in\N$, on the one hand,  $\ds \frac{d \hat\varphi_i}{ds}(s)\in(1-\epsilon_i)(\C\cap\B_m)$ a.e. $s\in [0,\bar S]$, so  that $(\hat y_{i}^0 , \hat y_{i} ,\hat \nu_i , 0,\frac{d\hat\varphi_i}{ds})$  is a feasible process for  problem $(\hat P_{i})$; on the other hand,  $\ds \frac{d \hat\varphi_i^0}{ds}(s)\ge \epsilon_i>0$ a.e. $s\in[0 \bar S]$, so that $(\bar S,\hat y^0_{i} , \hat y_{i} ,  \hat \nu_{i} , \hat\varphi^0_{i}  ,\hat\varphi_{i} )$ is an embedded strict sense  process for problem $(P_{e})$. Moreover, 
 \begin{equation}
\label{A2}
\left\| \left(\frac{d \hat\varphi^0_i}{ds},\frac{d \hat\varphi_i}{ds}\right) - \left(\frac{d \bar\varphi^0}{ds}, \frac{d \bar\varphi}{ds}\right) \right\|_{  L^{\infty}(0,\bar S)} \rightarrow 0  \mbox{ as } i \rightarrow \infty\,,
\end{equation} 
which implies that the $( \hat\varphi^0_{i},\hat\varphi_{i} )$ converge uniformly to $(\bar\varphi^0, \bar\varphi)$. {Noting
Hypothesis (H1)* \,(i) 
 and also the continuity properties of the input-output map, proved  e.g.in  \cite{MR},  we obtain:}
\begin{equation}
\label{A1}
\|(\hat y^0_{i},\hat y_{i},  \hat \nu_{i}) -(\bar y^0,\bar y,\bar\nu) \|_{L^{\infty}(0,\bar S)} \rightarrow 0, \mbox{ as } i \rightarrow \infty. 
\end{equation}
 Since $\phi$ is non-negative valued and vanishes at
$(\bar y^0(0), \bar y(0), \bar y^0(\bar S), \bar y(\bar S), \bar \nu(\bar S))$, and in view of  \eqref{A1}, there exists a sequence  $\rho_{i} \searrow 0$ such that, for each $i$,
  $(\hat y_{i}^0 , \hat y_{i} ,\hat \nu_i , 0,\frac{d \hat\varphi_i}{ds}  )$  is a $\rho_{i}^{2}$-minimizer for $(\hat P_{i})$, i.e. is a process whose cost  exceeds the infimum cost for  $(\hat P_{i})$ by an amount not greater that $\rho_{i}^{2}$.
\vv

\noindent
Problem $(\hat P_{i})$ can be regarded as an optimization problem with continuous cost over elements $(d , w ,   y^0(0), y(0))$ in a closed subset of $L^{1}([0, \bar S]; \R)\times L^{1}([0, \bar S];\R^m)\times \R\times \R^{n}$. In consequence of Ekeland's Principle \cite[Thm. 3.3.1]{Vinter}, there exists, for each $i$, a feasible process  for $(\hat P_{i})$, \newline $(y^0_{i} , y_{i} ,\nu _{i} ,  d_{i} ,w_{i} )$,  such that
\begin{equation}
\label{A3} |(  \hat y^0_{i} ,\hat y_{i})(0)- ( y^0_{i} ,y_{i})(0) |  \rightarrow 0, \,
\left\| \hat w_i -w_i \right\|_{L^{1}(0, \bar S)} \rightarrow 0 \mbox{ and } \|d_{i} \|_{L^{1}(0, \bar S)} \rightarrow 0  \mbox{ as } i \rightarrow \infty \,,
\end{equation}
and 
  $ (y^0_{i} , y_{i} , \nu_{i} , d_{i} , w_{i} )$ is a minimizer for 
$$
(P_{i}) \left\{\begin{array}{l}
\mbox{ Minimize } \phi \big(y^0(0), y(0), y^0(\bar S),  y(\bar S),   \nu(\bar S)\big) + \\ [1.5ex]
\ds \qquad\qquad\qquad\rho_{i}\cdot\bigg( |( y^0,y)(0)-( y^0_i,y_{i})(0)| +\ds\int_0^{\bar S} |w(s)-w_{i}(s)|+ |d(s)-d_{i}(s)|ds\bigg)
\\ [1.5ex]
\displaystyle\mbox{over } (y^0, y,  \nu)  \in W^{1,1}([0,\bar S];\R\times \R^{n}\times \R) \mbox{ and  measurable   functions}  
\\ [1.5ex] d :[0,\bar S]\rightarrow \R\,,  \ w :[0,\bar S]\rightarrow \R^m\,
\displaystyle\mbox{ satisfying }
\\ [1.5ex]
\displaystyle  \frac{d{y^0}}{ds} (s)\,=\,  {(1+ d(s))}(1-|w(s)|) \mbox{ a.e. } s \in [0,\bar S]\,, 
\\  [1.5ex]
\displaystyle \frac{dy}{ds} (s)\,=\, (1+ d(s)) \bigg( f(y^0(s), y(s))(1-|w(s)) + \sum_{j=1}^{m}g_{j}(y^0(s), y(s))w^j(s)\bigg) \mbox{ a.e. } s \in [0,\bar S]\,,
\\ [1.5ex]
 \displaystyle  \frac{d{\nu}}{ds} (s)\,=\, (1+ d(s)) |w(s)|
\\ [1.5ex]
\displaystyle  w(s)\in (1-\epsilon_{i})(\mathcal{C}\cap \B_m)
 \,,\mbox{ a.e. } s \in [0,\bar S]\,,
\\ [1.5ex]
 \displaystyle  d(s) \in [-0.5, +0.5]  \mbox{ a.e. } s \in [0,\bar S]\,, 
 \\ [1.5ex]
 \nu(0)=0.
\end{array}\right.
$$
It follows from (\ref{A2}) and (\ref{A3}) that
\begin{equation}
\label{A4} 
\left\|w_i - \frac{d\bar\varphi}{ds} \right\|_{L^{1}(0,\bar S)} \rightarrow 0 \mbox{ and }  \| ({y^0_i} , y_{i}, \nu_i )- (\bar y^0 , \bar y, \bar\nu ) \|_{L^{\infty}(0, \bar S)} \rightarrow 0, \, \mbox{ as } i \rightarrow \infty\,.
\end{equation}
By extracting subsequences, we can arrange that
\begin{equation}
\label{A4*} 
w_i(s)\rightarrow \frac{d\bar\varphi}{ds}(s) \mbox{ and } d_{i}(s) \rightarrow 0\quad \mbox{a.e. } s\in [0,\bar S], \mbox{ as } i \rightarrow \infty\,.
\end{equation}
 Now, 
apply the PMP to $(P_{i})$ with reference to the minimizer 
$(  y^0_{i} , y_{i} , \nu_{i} , d_{i} ,  w_{i} )$. This yields $({p_0}_{i},p_{i}) \in W^{1,1}([0,\bar S];\R^{n})$ and  $\pi_i \in\R$ such that the adjoint equations
\begin{small}
\begin{eqnarray}
\label{S0}
&&
\displaystyle\frac{d{p_0}_{i}}{ds} (s)\, =  -p_{i}(s)\cdot \bigg(\frac{\partial f}{\partial t}  ( y^0_{i}(s),y_{i}(s))(1-|w_{i}(s)|) + \sum_{j=1}^m\frac{\partial g_j}{\partial t}  (y^0_{i}(s), y_{i}(s))w_{i}^{j}(s)\bigg)   (1+d_{i}(s)),  \ 
\\ 
\nonumber
&&
\displaystyle\frac{dp_i}{ds} (s)\, =  -p_{i}(s)\cdot \bigg(\frac{\partial f}{\partial x}  (y^0_{i}(s), y_{i}(s))(1-|w_{i}(s)|) + \sum_{j=1}^m\frac{\partial g_j}{\partial x}  (y^0_{i}(s), y_{i}(s))w_{i}^{j}(s)\bigg)   (1+d_{i}(s))  
\end{eqnarray}
\end{small}
are verified (on $[0,\bar S]$), inequality
\begin{eqnarray}
\nonumber
&&
\displaystyle \int_{0}^{\bar S}
\Bigg\{\bigg[
p_{i}(s)\cdot \bigg(f(y^0_{i}(s), y_{i}(s))(1-|w(s)|) + \sum_{j=1}^mg_{j}(y^0_{i}(s), y_{i}(s))w^{j}(s) \bigg)
\\
\nonumber
&&
\hspace{0.3 in}
+ {p_0}_{i}(s)  (1-|w(s)|) +  \pi_i |w(s)|
\bigg]
  (1+ d(s)) 
 + \rho_{i}
\left(
 |w(s)-w_{i}(s)| + |d(s)-d_{i}(s)|
 \right) \Bigg\}ds
\\
\nonumber
&& \hspace{0.3 in} \leq \; \int_{0}^{\bar S}\Bigg\{
\bigg[p_{i}(s)\cdot \bigg(f(y^0_{i}(s), y_{i}(s))(1-|w_{i}(s)|)+ \sum_{j=1}^m g_{j}(y^0_{i}(s), y_{i}(s))w_{i}^{j}(s)\bigg) 
\\
\label{S1}
&&
\hspace{1.0 in}
+ {p_0}_{i}(s) (1-|w_{i}(s)|)+  \pi_i |w_i(s)| \bigg]
  (1+ d_i(s)) \Bigg\}ds
 \end{eqnarray}
holds true for all measurable selectors $w $ and $d $ of   $(1-\epsilon_{i})(\mathcal{C}\cap \B_m)$ and $[-0.5,0.5]$ respectively, and, furthermore, one has the transversality relation
\begin{eqnarray}
\nonumber 
 &&({p_0}_{i}(0),  p_{i}(0), -{p_0}_{i}(\bar S), -p_{i}(\bar S), -\pi_i)  
  \\ [1.5ex]
\nonumber 
 && \in\partial \Big( \phi(y^0_{i}(0),  y_{i}(0),y^0_{i}(\bar S), y_{i}(\bar S),  \nu_i(\bar S)) + \rho_i|(y^0,y)(0) - (y^0_i,y_i)(0)| \Big)\\ [1.5ex]
 \label{S2}
&&
\hspace{1.0 in} \,\subseteq\,   \partial\phi\big(y^0_{i}(0),  y_{i}(0),y^0_{i}(\bar S), y_{i}(\bar S),  \nu_i(\bar S)\big)
 + \rho_{i} \B_{1+n} \times \{0_{1+n}\}\times \{0\}\,.
\end{eqnarray}
 Notice that we have set the cost multiplier equal to $1$, as is permitted, since $(P_{i})$  has free right endpoint.  It can be deduced from (\ref{S0}) and (\ref{S2}) that $\{({p_0}_{i},p_{i}) \}$ is an equi-bounded sequence of functions in $W^{1,1}$ with equi-integrable derivatives and  $\{\pi_i\}$ is bounded. It follows that there exist  $(p_0,p)  \in W^{1,1}$ and   $\pi\in\R$  such that, along some sequence (we do not relabel)
\begin{equation}
\label{A4**}
\|({p_0}_{i},p_{i}) - (p_0,p) \|_{L^{\infty}(0, \bar S)  } \rightarrow 0, \quad \pi_i\to \pi \quad \mbox{ as } i \rightarrow \infty\,.
\end{equation}
Notice next from (\ref{A4}) that
$$
 |(y^0_{i}(0), y_{i}(0),y^0_{i}(\bar S),  y_{i}(\bar S),\nu_{i}(\bar S)) - (\bar y^0(0), \bar y(0), \bar y^0(\bar S), \bar y(\bar S),  \bar \nu(\bar S))| \rightarrow 0    \ \mbox{ as } i \rightarrow \infty\,.
$$
Let $(\varphi^0_i,   \varphi_i) \in W^{1,1}([0,\bar S];\R\times \R^{m})$ be defined by 
$$
\begin{array}{l}
\displaystyle\varphi^0_i (s)\,\colonequals \bar\varphi^0(0) + \int_0^s\, (1+ d_{i}(r))\,(1-| w_{i}|(r))dr  \,\\ \, \\
\varphi_i(s)\,\colonequals \ds\bar\varphi(0) + \int_0^s\, (1+ d_{i}(r))\, w_{i}(r)dr \,
\end{array}\quad  \forall s \in [0,\bar S].
$$
By the rate independence of the system
$$
\begin{array}{l}
\displaystyle  \frac{d y^0_i}{ds} (s)\,=  \, (1+ d_{i}(s))\frac{d\varphi^0_i}{ds}(s)  \,, \\
\displaystyle \frac{dy_i}{ds} (s)\,=\, (1+ d_{i}(s)) \bigg( f(y^0_{i}(s), y_{i}(s))\frac{d\varphi^0_i}{ds}(s) + \sum_{j=1}^m g_{j}(y^0_{i}(s), y_{i}(s)) \frac{d\varphi^j_i}{ds}(s)\bigg)\,,\\  
\displaystyle \frac{d\nu_i}{ds} (s)\,=\, (1+ d_{i}(s))\left|\frac{d\varphi_i}{ds}(s)\right| ,
\end{array}
$$
by considering the reparameterization  $\ds\sigma_{i}(s)= \int_0^s (1+ d_{i}(s'))ds' $,
$$
( \tilde y^0_{i} ,\tilde y_{i}, \tilde \nu_i, {\tilde\varphi}^0_{i}, \tilde{\varphi}_{i})(\sigma):=(y^0_{i}, y_{i}, \nu_i,\varphi^0_{i}, \varphi_{i})(\sigma_{i}^{-1}(\sigma)), \quad 0 \leq \sigma \leq \tilde S,
$$
 where $\tilde S_{i} := \sigma_{i}(\bar S)$, we obtain --see Remark \ref{rate}-- that $( \tilde y^0_i , \tilde y_i ,\tilde \nu_i )$ satisfies the differential system
\begin{equation}
\label{list0}
\begin{array}{l}
  \displaystyle \frac{d\tilde y^0_i}{ds} (s)\,=\,   \frac{d{\tilde\varphi}^0_{i}}{ds} (s)  \quad \mbox{ a.e. } s \in [0,\tilde S_{i}]\,,
 \\
 \displaystyle  \frac{d\tilde y_i}{ds} (s)\,=\,  f( \tilde y^0_i(s), \tilde{y}_{i}(s)) \frac{d{\tilde\varphi}^0_{i}}{ds} (s)  + \sum_{j=1}^m g_{j}(\tilde y^0_i(s), \tilde y_{i}(s))\frac{d\tilde{\varphi}^j_{i}}{ds} (s) \quad \mbox{ a.e. } s \in [0,\tilde S_{i}]\,,\\  
\displaystyle \frac{d\tilde \nu_i}{ds} (s)\,=\, \left|\frac{d\tilde\varphi_i}{ds}(s)\right|  \quad  \mbox{ a.e. } s \in [0,\tilde S_{i}]\,.
  \end{array}
\end{equation}
We observe that, for each $i$,  we have
\begin{eqnarray}
\label{list1}
&&
 \displaystyle   \frac{d{\tilde\varphi}^0_{i}}{ds} (s)=1- \left|\frac{d\tilde{\varphi}_{i}}{ds} (s)\right|, \quad   \frac{d\tilde{\varphi}_{i}}{ds} (s) \in (1-\epsilon_{i})\,(\mathcal{C}\cap \B_m) \ \mbox{ a.e. } s \in [0,\tilde S_i],
 \\
 \label{list2}
 &&
 \displaystyle (\tilde y^0_i(0), \tilde y_{i}(0), \tilde y^0_{i}(\tilde S_{i}), \tilde y_{i}(\tilde S_i),  \tilde \nu_i(\tilde S_{i}))\;= \;(y^0_i(0), y_{i}(0), y^0_{i}(\bar S),  y_{i}(\bar S), \nu_i(\bar S))\,.
\end{eqnarray}
Therefore, we deduce from  (\ref{A4}) that, for $i$ sufficiently large, 
\begin{equation}
\label{list3}
d_\infty\Big((\tilde y^0_i(0),\tilde y^0_i(\tilde S_i), \tilde y_i,\tilde\nu_i), (\bar y^0(0),\bar y^0(\bar S),\bar y , \bar \nu)\Big)<\delta,
\end{equation}
where $\delta>0$ is the constant appearing in Definition \ref{IP}, { with reference to the isolated extended sense feasible process $(\bar S, \bar y^0 , \bar y , \bar\nu,  \bar \varphi^0 ,  \bar \varphi )$.}
Relations (\ref{list0}) and (\ref{list1}) tell us that $(\tilde S_{i}, \tilde y^0_{i}  ,\tilde y_{i} , \tilde \nu_{i} , \tilde\varphi^0_{i} ,\tilde\varphi_{i} )$ is an embedded strict sense process.  But taking note of the conditions imposed on the process $(\bar S, \bar y^0 , \bar y ,  \bar\nu, \bar \varphi^0 ,  \bar \varphi )$ in the theorem statement (namely,  the fact that it is an  isolated feasible extended sense process), we deduce from (\ref{list3})   that
$(\tilde S_{i}, \tilde y^0_{i}  ,\tilde y_{i} , \tilde \nu_{i} , \tilde\varphi^0_{i} ,\tilde\varphi_{i} )$ cannot be a feasible embedded strict sense process. We conclude that it must violate the endpoint constraints. By (\ref{list2}), also $(\bar S, y^0_{i}  , y_{i} , \nu_i,  \varphi^0_{i} ,\varphi_{i} )$ must  violate these constraints. Therefore,
 \begin{equation}
\label{greater} 
\max \{ d_{\mathcal{T}}(y^0_i(0), y_{i}(0), y^0_{i}(\bar S),  y_{i}(\bar S)),\,  (\nu_i(\bar S)-K)\vee 0\} \,>\,0\,.
\end{equation}
{Inequality  (\ref{greater}) provides the important information that, if  the maximum in
$$
\begin{array}{l}
\ds\phi(y^0_{i}(0), y_{i}(0), {y^0_i}(\bar S),  y_{i}(\bar S),  \nu_i(\bar S)) = 
\max\{  d_{\mathcal{T}}(y^0_{i}(0), y_{i}(0), {y^0_i}(\bar S),  y_{i}(\bar S)),\,    (\nu_i(\bar S)-K)\vee 0\}
\end{array}
$$
is achieved at   $d_{\mathcal{T}}(y^0_{i}(0), y_{i}(0), {y^0_i}(\bar S),  y_{i}(\bar S))$  then $d_{\mathcal{T}}(y^0_{i}(0), y_{i}(0), {y^0_i}(\bar S),  y_{i}(\bar S)) >0$, and if the maximum is achieved at $ (\nu_i(\bar S)-K)\vee 0$, then $\nu_i(\bar S)-K >0$.}
Note also that (see, e.g., \cite[Lemma 4.8.3]{Vinter}) 
$$
\begin{array}{l} 
\partial d_{\mathcal{T}}(y^0_{i}(0), y_{i}(0), {y^0_i}(\bar S),  y_{i}(\bar S)) \subseteq \partial \B_{1+n+1+n} \quad\text{ if $(y^0_{i}(0), y_{i}(0), {y^0_i}(\bar S),  y_{i}(\bar S)) \notin \mathcal{T}$,} \\ \, \\
 \partial (\nu_i(\bar S)-K)\vee 0)= 1 \quad\text{ if $\nu_i(\bar S)>K$.}
 \end{array}
 $$
 Making use of these facts, we can deduce from the  `max' rule of subdifferential calculus (see, e.g., \cite[Thm. 5.5.2]{Vinter}) the following estimate: if   $\xi \in \partial\phi(z)$ 
at $z = (y^0_{i}(0), y_{i}(0), {y^0_i}(\bar S),  y_{i}(\bar S), \nu_i(\bar S))$ then 
$$
\xi \;\in\; \alpha_{i}\,
\bigg(\big(\partial d_{\mathcal{T}}(y^0_{i}(0), y_{i}(0), {y^0_i}(\bar S),  y_{i}(\bar S))\cap \partial \B_{1+n+1+n}\big) \times\{0\}\bigg) +(1-\alpha_i)\bigg(\{0_{1+n+1+n}\}\times \{1\}\bigg)
$$
with $\alpha_{i}\in[0,1]$.
 It follows then from (\ref{S2}) that
 \begin{equation}
 \label{S2*}
 \begin{array}{l}
 ({p_0}_i(0), p_{i}(0),-{p_0}_i(\bar S), -p_{i}(\bar S)) \in \\[1.5ex]
 \qquad\qquad\qquad\qquad  \alpha_{i}\,( \partial d_{\mathcal{T}}( y^0_{i}(0), y_{i}(0), {y^0_i}(\bar S),  y_{i}(\bar S)) \cap \partial \B_{1+n+1+n} ) \,
+ \, \rho_{i} \B_{1+n}\times \{0_{1+n}\}
 \end{array}
 \end{equation}
 and
 \begin{equation}
 \label{S2**}
  \pi_i=-(1-\alpha_i).
 \end{equation}
By extracting further subsequences, we can arrange that $\alpha_{i} \rightarrow \alpha$ as $i \rightarrow \infty$, for some $\alpha\in[0,1]$. Passing to the limit in (\ref{S0}) (in `integral' form), in (\ref{S1}) and in (\ref{S2}), as $i \rightarrow \infty$, with the help of (\ref{A4}), (\ref{A4*}) and  (\ref{A4**}), we arrive at
 \begin{eqnarray}
\label{S0*}
&&
\displaystyle -    \frac{dp_0}{ds}(s)\,=\, p(s)\cdot \bigg(\frac{\partial f}{\partial t}  (\bar y^0(s), \bar  y(s)) \frac{d\bar \varphi^0}{ds} (s)+  \sum_{j=1}^{m}\frac{\partial g_j}{\partial t}  (\bar y^0(s), \bar  y(s)) \frac{d\bar \varphi^j}{ds} (s)\bigg),  
\\
\nonumber
&&
\displaystyle -    \frac{dp}{ds}(s)\,=\, p(s)\cdot \bigg(\frac{\partial f}{\partial x}  (\bar y^0(s), \bar  y(s)) \frac{d\bar \varphi^0}{ds} (s)+  \sum_{j=1}^{m}\frac{\partial g_j}{\partial x}  (\bar y^0(s), \bar  y(s)) \frac{d\bar \varphi^j}{ds} (s)\bigg) \ \mbox{ a.e. $s\in [0,\bar S]$,} 
\\
\nonumber
&&\int_{0}^{\bar S}
p(s)\cdot \bigg(f(\bar y^0(s),  \bar y(s))(1-|w(s)|)+ \sum_{j=1}^m g_{j}(\bar y^0(s), \bar y(s)) w^{j}(s)\bigg)\,(1+ d(s))  ds 
\\
\nonumber
&&
 \qquad\qquad\qquad\qquad\qquad\qquad\qquad\qquad+ \int_{0}^{\bar S}\bigg({p_0}(s) \,(1-|w(s)|) +\pi|w| \bigg)
\,(1+ d(s))  ds
\\
\label{MP*}
&&\hspace{0.2 in} \leq \; \int_{0}^{\bar S} 
 p(s)\cdot \bigg(f( \bar y^0(s), \bar y(s))  \frac{d\bar{\varphi}^0}{ds} (s)  + \sum_{j=1}^m g_{j}(\bar y^0(s), \bar y(s))  \frac{d\bar{\varphi}^j}{ds} (s) \bigg)\,ds
 \\
\nonumber
&&
 \qquad\qquad\qquad\qquad\qquad\qquad\qquad\qquad+\int_{0}^{\bar S}
\bigg( {p_0}(s)  \, \frac{d\bar{\varphi}^0}{ds} (s) +\pi \left| \frac{d\bar{\varphi}^0}{ds} (s) \right| \bigg) \,ds
\nonumber
    \,,
 \\
\label{S1*}
&&
\hspace{0.1 in}\mbox{for all selectors $w  $ and $d $ of $\mathcal{C}\cap \B_m$ and $[-0.5,0.5]$ respectively,}
 \\
 \nonumber
 &&
 \\
 \label{S2*}
 && (p^0(0), p(0),-p^0(\bar S),-p(\bar S) \in \alpha  (\partial d_{\mathcal{T}}(\bar y^0(0), \bar y(0),\bar y^0(\bar S), \bar y(\bar S))\cap \partial \B_{1+n+1+n})\,,
 \\
 \nonumber
  &&
 \\
 \label{S3*}
 &&
 |\pi|=1-\alpha.
\vspace{0.2 in}
\end{eqnarray}
 We deduce from (\ref{MP*}) with the help of a measurable selection theorem that, for { a.e. }$s\in [0,\bar S]$,
$$\begin{array}{l} \ds p(s)\cdot \bigg(f(\bar y^0(s), \bar y(s))  \frac{d\bar{\varphi}^0}{ds} (s)  + \sum_{j=1}^m g_{j}(\bar y^0(s), \bar y(s))  \frac{d\bar{\varphi}^j}{ds} (s) \bigg)
+ {p_0}(s)  \, \frac{d\bar{\varphi}^0}{ds} (s) +\pi \left| \frac{d\bar{\varphi}^0}{ds} (s) \right| =\,\\
\underset{(w^0,w)  \in \CC,\;d \in [0.5,1.5]}{\max}
\Big\{ 
\big[p(s)\cdot \big(f(\bar y^0(s), \bar y(s))w^0\; + 

\sum_{j=1}^{m}g_{j}(\bar y^0(s), \bar y(s))w^{j}\big)
+ {p_0}(s) w^0 +\pi|w|
\big] (1+d)\Big\}\, .\end{array}
$$
 
 \noindent
Since 
 $1$ is interior to $[0.5, 1.5]$, this implies
\begin{eqnarray}
\nonumber
&&
 p(s)\cdot \bigg(f(\bar y^0(s),  \bar y(s))  \frac{d\bar{\varphi}^0}{ds} (s)  + \sum_{j=1}^m g_{j}(\bar y^0(s), \bar y(s))  \frac{d\bar{\varphi}^j}{ds} (s) \bigg)
+ {p_0}(s)  \, \frac{d\bar{\varphi}^0}{ds} (s) +\pi \left| \frac{d\bar{\varphi}^0}{ds} (s) \right|=
\\
\label{MP**}
&&
 \underset{(w^0,w)  \in \CC}{\max}
\left\{ 
p(s)\cdot \big(f(\bar y^0(s), \bar y(s))w^0  + 
 \sum_{j=1}^{m} g_{j}(\bar y^0(s), \bar y(s))w^j\big)
+ {p_0}(s)  w^0 +\pi|w|
\right\}\,=0 \mbox{ a.e.}
\end{eqnarray} 
 Furthermore, (\ref{S2*}) and (\ref{S3*}) imply that 
 \begin{equation}
 \label{tc*}
 (p_0(0), p(0),-p_0(\bar S),  -p(\bar S)) \,\in \, N_{\mathcal{T}}(\bar y^0(0), \bar y(0), \bar y^0(\bar S),  \bar y(\bar S)).
 \end{equation}
 From (\ref{S2*}) and (\ref{S3*}) we deduce that $({p_0} , p ,\pi)\not= (0,0,0)$.
 Employing the same arguments as those in the proof of  Thm.  \ref{PMPe} we deduce from the latter relation that
\begin{equation}
 \label{nontriv*}
 (p_0 ,p ) \not= 0\,.
 \end{equation}
 Surveying the relations satisfied by the absolutely continuous function $(p_0,p) $ and   $\pi\le 0$, namely (\ref{S0*}), (\ref{MP**}), (\ref{tc*})  and (\ref{nontriv*}), we see that the proof is complete.
 
\end{proof}

\subsection{Proof of Theorem \ref{cor}.}\label{secpr}
 It is convenient to prove the two assertons in reverse order. 
\vspace{0.1 in}

\noindent
 (ii): Assume, contrary to the assertions of the theorem, that the  $L^{\infty}$ local minimizer   \linebreak $(\bar S,\bar y^0 , \bar y , \bar\nu, \bar \varphi^0 ,\bar \varphi )$ for $(P_{e})$ is a normal extended sense  extremal while, at the same time, there is a local infimum gap at $(\bar S,\bar y^0 , \bar y , \bar\nu, \bar \varphi^0 ,\bar \varphi )$.} We deduce from the latter property that there exists   a number $c>0$ such that
\begin{eqnarray}
\label{gap1}
&& 
J_{e}(\bar S,\bar y^0 ,\bar  y , \bar\nu, \bar \varphi^0 ,\bar \varphi )
\,<\,
 J_{e}(S,y^0 , y , \nu, \varphi^0 ,\varphi ) \,-c\,
 \end{eqnarray}
 for every embedded strict sense  feasible process $(S,y^0 , y , \nu, \varphi^0 ,\varphi )$ verifying \eqref{close2} for the same $\delta>0$ as in Def. \ref{deflocmin}. Since    $(\bar S,\bar y^0 , \bar y , \bar\nu, \bar \varphi^0 ,\bar \varphi )$ is a normal extended sense extremal, we know from  Thm.  \ref{mainextended} that it is not an isolated  extended sense feasible  process. This means that there exists a sequence of  embedded strict sense feasible processes $\{(S_{i},y_{i}^0 , y_{i} , \nu_i,  \varphi_{i}^0 ,\varphi_{i} )\}$ such that  
 \begin{equation}\label{gap2}
|y_{i}^0(0)-\bar y^0(0)|+ |y_{i}^0(S_{i})-\bar y^0(\bar S)|+\|(y_{i},\nu_i ) - ( \bar y, \bar\nu )\|_{L^{\infty}(\R)} \rightarrow 0\,, \mbox{ as } i \rightarrow \infty\,,
\end{equation}
(where $( \bar y , \bar \nu)$  and $( y_i,\nu_i)$
 are extended continuously to $\R$ by  requiring  them to be constant outside the original domains
  $[0, \bar S]$ and $[0,S_i]$, respectively).   This is simply shown to imply  that $\ds\lim_{i \rightarrow \infty} S_i=\bar S$. 
But
\begin{equation}\label{gap20}
J_{e}(S,y_{i}^0 , y_{i} , \nu_i, \varphi_{i}^0 ,\varphi_{i} ) =h(y^{0}_{i}(0), y_{i}(0),y^{0}_{i}(S_{i}), y_{i}(S_{i}), \nu_{i}(S_{i}))\, 
\end{equation}
and, since $h $ is continuous, we deduce {from} (\ref{gap2}) and (\ref{gap20}) that
\begin{eqnarray*}
&&
\lim_{i \rightarrow \infty} J_{e}(S,y_{i}^0 , y_{i} , \nu_i, \varphi_{i}^0 ,\varphi_{i} ) =\lim_{i \rightarrow \infty}  h(y^{0}_{i}(0), y_{i}(0),y^{0}_{i}(S_{i}), y_{i}(S_{i}),  \nu_{i}(S_{i}))
\\
&&
\quad = h(\bar y^{0}(0), \bar y(0),\bar y^{0}(\bar S), \bar y(\bar S), \bar \nu(\bar S))
= J_{e}(\bar S,\bar y^0 ,\bar  y ,\bar\nu, \bar \varphi^0 ,\bar \varphi )\,.
\end{eqnarray*}
This is not possible, in view of (\ref{gap1}).  So there is no local infimum gap at $(\bar S,\bar y^0 , \bar y , \bar\nu, \bar \varphi^0 ,\bar \varphi )$. 
\vspace{0.1 in}
 \noindent
(i): Suppose that there exists a minimizer for $(P_{e})$ (write it $(\bar S,\bar y^0 , \bar y , \bar\nu, \bar \varphi^0 ,\bar \varphi )$ ) which is a normal extended sense extremal. Then $(\bar S,\bar y^0 , \bar y , \bar\nu, \bar \varphi^0 ,\bar \varphi )$ is certainly an extended sense $L^{\infty}$  local minimizer. So by part (ii), there is no local infimum gap at $(\bar S,\bar y^0 , \bar y , \bar\nu, \bar \varphi^0 ,\bar \varphi )$. This means that, for some $\delta>0 $,
\begin{eqnarray*}
&&J_{e}(\bar z) = \inf \{   J_{e}(z) \,|\,  z \in {{\cal A}}_e^+  \mbox{ s.t. } d_{\infty}(z,\bar z) \leq \delta \}
 \geq \inf \{   J_{e}(z) \,|\,  z \in       {{\cal A}}_e^+  \}\,,
\end{eqnarray*}
in which $\bar z := (\bar S,\bar y^0 ,\bar  y ,\bar\nu, \bar \varphi^0 ,\bar \varphi )$. Since $\bar z$ is a minimizer,
$$
J_{e}(\bar z)= \inf \{   J_{e}(z) \,|\,  z \in {{\cal A}}_{e}  \}
$$
Bearing in that $\inf \{   J_{e}(z) \,|\,  z \in {{\cal A}}_{e} \}\leq \inf \{   J_{e}(z) \,|\,  z \in     {{\cal A}}_e^+\}$, we conclude that
$$
\inf \{   J_{e}(z) \,|\,  z \in {{\cal A}}_{e}\}= \inf \{   J_{e}(z) \,|\,  z \in     {{\cal A}}_e^+ \}\,,
$$
i.e. there is no infimum gap.

\qed

\section{Verifiable conditions for No Infimum Gap}\label{SNG}
The sufficient condition of  Thm.  \ref{cor} for the absence of an infimum gap  has the disadvantage, as a practical test, that it is expressed in terms of some minimizer,   detailed information  about which  might not be available. The `normality' test is  of interest, nonetheless, because in certain special cases it can be replaced by  simpler,  verifiable conditions, some examples of which we now provide. These involve two notions of controllability w.r.t. the target set.
\vspace{0.1 in}
\begin{definition} Consider the
control system 
$$
(S)
\left\{
\begin{array}{l}
\displaystyle\frac{dx}{dt}(t)\,=\, f(t , x(t)) + \sum_{j=1}^{m}g_{j}(t,x(t))\, \frac{du^j}{dt}(t) 
\quad \mbox{ a.e. } t \in [t_1,t_2] ,
\\
\displaystyle\frac{du}{dt}(t) \in \mathcal{C}\,\, \mbox{ a.e. } t \in [t_1,t_2],  
\end{array}
\right.
$$
\begin{itemize} 
\item[{\rm (i)}] $(S)$
is said to be {\rm quick $1$-controllable} w.r.t. the target set $\T$  at  a point  $(t_1,x_1,t_2,x_2) \in \T$ if, for any  covector  $\zeta=(\zeta_{t_1},\zeta_{x_1},\zeta_{t_2},\zeta_{x_2}) \in N_{\T}(t_1,x_1,t_2,x_2)$ such that $\zeta_{x_{2}} \not= 0$, we have
 \begin{equation} \label{QC}
\displaystyle \inf_{w\in \mathcal{C}} \,\zeta_{x_2}\cdot \,\sum_{j=1}^{m}  g_{j}(t_2, x_2)\,   w^j < 0 .
 \end{equation} 
 \item[{\rm (ii)}] 
 $(S)$ is said to be  {\rm drift-controllable} w.r.t. the target set $\T$ at  a point  $(t_1,x_1,t_2,x_2) \in \T$ if, for any  covector  $\zeta=(\zeta_{t_1},\zeta_{x_1},\zeta_{t_2},\zeta_{x_2}) \in N_{\T}(t_1,x_1,t_2,x_2)$ such that $\zeta_{x_{2}} \not= 0$, we have
 \begin{equation} \label{DC}
\displaystyle  \zeta_{x_2} \,\cdot\,f(t_2, x_2)  < 0 .
 \end{equation}
\end{itemize}
\end{definition}

\vsm 
\begin{proposition}\label{NE} 
 Consider the optimal control problem $(P)$ and its extended sense formulation $(P_{e})$. Let the data  satisfy hypothesis
{\rm (H1)}. Assume that there exists an extended sense minimizer $(\bar S,\bar y^0 ,\bar  y , \bar\nu, \bar \varphi^0 ,\bar \varphi )$ such that
 \begin{itemize}
 \item[{\rm (i)}] $\bar y^0(\bar S) > \bar y^0(0) $ (that is, the $t$-time interval is non-degenerate),
\item[{\rm (ii)}]  $\bar\nu(\bar S)<K$,
\item[{\rm (iii)}] $(S)$ is quick $1$-controllable w.r.t. ${\cal T}$ at $(\bar y^{0}(0),\bar  y(0), \bar y^{0}(\bar S), \bar y(\bar S))$.
 \end{itemize}
Then $(\bar S,\bar y^0 ,\bar  y ,\bar\nu, \bar \varphi^0 ,\bar \varphi )$ is a normal extremal and, in consequence of Theorem \ref{cor},  
there is no infimum gap.
\end{proposition}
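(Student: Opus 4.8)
The plan is to reduce the statement to a normality assertion and then invoke Theorem \ref{cor}(i). Since the given minimizer $(\bar S,\bar y^0 ,\bar  y ,\bar\nu, \bar \varphi^0 ,\bar \varphi )$ automatically satisfies the conclusions of the Maximum Principle (Theorem \ref{PMPe}), it is an extended sense extremal; proving it is \emph{normal} means showing that every admissible multiplier set $(p_0,p,\pi,\lambda)$ satisfying \eqref{fe1}--\eqref{fe4} must have $\lambda>0$. I would argue by contradiction, assuming that some such set has $\lambda=0$, and extract a contradiction from hypotheses (i)--(iii).

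First I would pin down the remaining multipliers. Because $\lambda=0$ forces $\lambda\,\frac{\partial h}{\partial v}(\cdots)=0$, and $\bar\nu(\bar S)<K$ by (ii), part (i) of Theorem \ref{PMPe} yields $\pi=0$. Next, since $\bar y^0(0)<\bar y^0(\bar S)$ by (i), the strengthened nontriviality \eqref{strongfe1} of Theorem \ref{PMPe}(ii) gives $(p,\lambda)\neq(0,0)$, so with $\lambda=0$ we get $p\not\equiv 0$. As the adjoint equation \eqref{fe2} for $p$ is linear and homogeneous, uniqueness of solutions propagates nonvanishing along $[0,\bar S]$, whence $p(s)\neq 0$ for every $s$, and in particular $p(\bar S)\neq 0$.

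With $\lambda=0$ and $\pi=0$, the transversality inclusion \eqref{fe4} collapses to $(p_0(0),p(0),-p_0(\bar S),-p(\bar S))\in N_{\mathcal T}(\bar y^0(0),\bar y(0),\bar y^0(\bar S),\bar y(\bar S))$, a covector whose $x_2$-block is $\zeta_{x_2}=-p(\bar S)\neq 0$. Quick $1$-controllability (iii) then supplies some $\hat w\in\mathcal C$ with $p(\bar S)\cdot\sum_{j=1}^m g_j(\bar y^0(\bar S),\bar y(\bar S))\hat w^j>0$; rescaling so that $|\hat w|=1$, the pair $(0,\hat w)$ lies in $\CC$. Testing this competitor in the maximized Hamiltonian \eqref{fe3} at $s=\bar S$ (with $\pi=0$, so the $p_0$- and $\pi$-terms drop out) produces the value $p(\bar S)\cdot\sum_j g_j(\bar y^0(\bar S),\bar y(\bar S))\hat w^j>0$. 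But \eqref{fe3} asserts that the maximum equals $0$ a.e.; since $\CC$ is compact and the integrand is jointly continuous in $(s,w^0,w)$, this maximum is a continuous function of $s$, hence identically $0$, including at $s=\bar S$. This strict positivity against a vanishing maximum is the desired contradiction, so $\lambda>0$.

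I expect the decisive step to be securing $p(\bar S)\neq 0$: this is precisely where the non-degeneracy hypothesis (i) enters, making the improved nontriviality \eqref{strongfe1}---rather than the weaker \eqref{fe1}---available, and where homogeneity of the $p$-adjoint equation carries nonvanishing to the endpoint. Once $p(\bar S)\neq 0$ is in hand, quick $1$-controllability together with the endpoint maximization close the argument routinely. Having established that the minimizer is a normal extremal, Theorem \ref{cor}(i) immediately gives that there is no infimum gap.
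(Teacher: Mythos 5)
Your proof is correct and takes essentially the same route as the paper's: argue by contradiction with $\lambda=0$, obtain $\pi=0$ from Theorem \ref{PMPe}(i) using hypothesis (ii), obtain $p\neq 0$ from the strengthened nontriviality \eqref{strongfe1} using hypothesis (i), read off a normal-cone covector with nonzero $x_2$-block from transversality, and contradict the vanishing maximized Hamiltonian in \eqref{fe3} at $s=\bar S$ via quick $1$-controllability. Your two explicit justifications---that $p(\bar S)\neq 0$ because the adjoint equation \eqref{fe2} is linear homogeneous, and that the a.e.\ maximum condition extends to $s=\bar S$ by compactness of $\CC$ and continuity---merely spell out steps the paper leaves implicit.
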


\noindent
{\it Remark: } 
 The role of 1-quick controllability  as a no infimum gap sufficient condition  was earlier identified in  \cite{AMR}, 
in the case when ${\cal T}$ takes the form
$$
{\cal T}= \{0\}\times \{x_{0}\}\times \{T\}\times \hat {\cal T}\,,
$$
for some closed set $\hat {\cal T} \subset \R^{n}$ and some $T>0$.  Prop. \ref{NE} broadens the applicability of the earlier sufficient condition by no longer requiring ${\cal T}$ to have this special structure. 
 
\begin{proof}[Proof of Proposition \ref{NE}] The proof involves showing that the given extended sense minimizer $(\bar S,\bar y^0  , \bar y , \bar\nu, {\bar \varphi}^0 ,{\bar \varphi} )$
is a normal extremal; it will follow immediately from Thm.  \ref{cor} that there is no infimum gap.
 Suppose that $(\bar S,\bar y^0  , \bar y , \bar\nu, {\bar \varphi}^0 ,{\bar \varphi} )$ is not a normal extremal. Since, by assumption, $\bar\nu(\bar S)< K$ and  $\bar y^0(\bar S)-\bar y^0(0)>0$, we can deduce from  Thm.   \ref{PMPe}, there exists a set of multipliers $(p_0 , p ,\pi, \lambda)$, with $\lambda=0$,  $p \ne0$ and  $\pi=0$.  From  \eqref{fe3} and  \eqref{fe4} in  Thm.  \ref{PMPe} it may be deduced that
 $$
(p_0(0), p(0), p_0(\bar S), p(\bar S))=(\zeta_{t_1},\zeta_{x_1},-\zeta_{t_2},- \zeta_{x_2})
$$
for some $(\zeta_{t_1},\zeta_{x_1},-\zeta_{t_2},- \zeta_{x_2}) \in    N_{\T}(\bar y^0 (0),\bar y (0), \bar y^0 (\bar S), \bar y (\bar S))$ with $\zeta_{x_2}\ne0$.

\noindent From the almost everywhere condition  \eqref{fe3} and the continuity of $(p_0 ,p )$ we deduce that
\begin{equation}\label{f31}
\displaystyle 
  \max_{(w^0,w)\in \CC} \bigg\{-\zeta_{x_2}\cdot \big(f( \bar y^0 (\bar S),\bar y(\bar S))w^0  +  \sum_{j=1}^m g_{j}( \bar y^0 (\bar S),\bar y(\bar S))w^{j}\big)
-\zeta_{t_2}  \,w^0\bigg\}=0 
\end{equation}
and, choosing $w_0=0$,  we arrive at
\begin{equation}\label{contr}
\displaystyle   \min_{w\in \C\cap\partial\B_m} \,  \zeta_{x_2}\cdot  \sum_{j=1}^m g_{j}(\bar y^0(\bar S),\bar y(\bar S))w^{j}  
\;\ge \;0.
\end{equation}
This trivially violates the quick 1-controllability hypothesis. So $(\bar S,\bar y^0  , \bar y , \bar\nu, {\bar \varphi}^0 ,{\bar \varphi} )$ is a normal extremal.
\end{proof}
\vspace{0.1 in}

  If we assume ${\cal T}$ has an epigraph structure, as made precise below, then the assertions of Prop. \ref{NE} remain valid, either when it is  no longer assumed that the $t$-time interval is non-degenerate (condition (i)), or when we replace (i)--(iii)  by `slow controllability'.
   
  \begin{proposition}\label{slow}  
 Consider the optimal control problem $(P)$ and its extended sense formulation $(P_{e})$. Let the data  satisfy hypothesis
{\rm (H1)}. Assume that there exists an extended sense minimizer $(\bar S,\bar y^0 ,\bar  y , \bar\nu, \bar \varphi^0 ,\bar \varphi )$ such that
\begin{itemize}
\item[(i)] ${\cal T}$ is an epigraph set on a  neighborhood of $e:=(\bar y^{0}(0),\bar  y(0), \bar y^{0}(\bar S), \bar y(\bar S))$, in the sense that there exist  $\epsilon >0$ and a lower semicontinuous function $\psi :\R \times \R^{n}\times \R^{n}\to\R$ such that
$$
{\cal T}\cap (e +\epsilon \B_{1+2n})\,=\,\{(t_{1},x_{1},t_{2},x_{2})\,|\,
t_{2} \geq \psi(t_{1},x_{1},x_{2}) \}\cap (e +\epsilon \B_{1+2n})
$$
\item[(ii)] either $(S)$ is drift-controllable  w.r.t. ${\cal T}$ at $(\bar y^{0}(0),\bar  y(0), \bar y^{0}(\bar S), \bar y(\bar S))$ or $\bar\nu(\bar S)<K$ and system (S) is   quick $1$-controllable w.r.t. ${\cal T}$ at $(\bar y^{0}(0),\bar  y(0), \bar y^{0}(\bar S), \bar y(\bar S))$.
\end{itemize}
 Then $(\bar S,\bar y^0 ,\bar  y , \bar\nu, \bar \varphi^0 ,\bar \varphi )$ is a normal {extended sense} extremal and, in consequence of   Thm.  \ref{cor},  
there is no infimum gap.
\end{proposition}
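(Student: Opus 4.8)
The plan is to argue by contradiction: supposing $(\bar S,\bar y^0 ,\bar  y , \bar\nu, \bar \varphi^0 ,\bar \varphi )$ is \emph{not} normal, Thm.~\ref{PMPe} furnishes a multiplier set $(p_0,p,\pi,\lambda)$ with $\lambda=0$, $\pi\le0$, satisfying \eqref{fe1}--\eqref{fe4}. Writing the transversality relation \eqref{fe4} with $\lambda=0$ and splitting $N_{\mathcal T\times[0,K]}=N_{\mathcal T}\times N_{[0,K]}$, I would set $\zeta:=(\zeta_{t_1},\zeta_{x_1},\zeta_{t_2},\zeta_{x_2}):=(p_0(0),p(0),-p_0(\bar S),-p(\bar S))\in N_{\mathcal T}(e)$ and record $-\pi\in N_{[0,K]}(\bar\nu(\bar S))$. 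The decisive structural input is hypothesis (i): since $\mathcal T$ coincides near $e$ with the epigraph $\{t_2\ge\psi(t_1,x_1,x_2)\}$, every limiting normal to $\mathcal T$ at $e$ has nonpositive ``vertical'' component, i.e. $\zeta_{t_2}\le0$. This single sign condition is what replaces the non-degeneracy hypothesis (i) of Prop.~\ref{NE} and drives the whole argument.

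The first key step is to show $\zeta_{x_2}\ne0$ (equivalently $p(\bar S)\ne0$), which in Prop.~\ref{NE} came for free from the strengthened non-triviality \eqref{strongfe1} but is now genuinely in doubt because the $t$-interval may be degenerate. Suppose instead $p(\bar S)=0$. The adjoint equation for $p$ in \eqref{fe2} is linear and homogeneous, so $p\equiv0$ on $[0,\bar S]$; then the $p_0$-equation gives $p_0\equiv\text{const}$, and this constant is nonzero by \eqref{fe1}. Feeding $p\equiv0$ into the maximization \eqref{fe3} reduces it to $\max_{(w^0,w)\in\CC}\{p_0w^0+\pi|w|\}=0$; parameterising the admissible pairs by $w^0\in[0,1]$ with $|w|=1-w^0$, this maximum equals $\max\{p_0,\pi\}$, so with $\pi\le0$ and $p_0\ne0$ it forces $\pi=0$ and $p_0<0$. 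Hence $\zeta_{t_2}=-p_0(\bar S)=-p_0>0$, contradicting $\zeta_{t_2}\le0$. Therefore $\zeta_{x_2}\ne0$.

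With $\zeta_{x_2}\ne0$ in hand I would split according to hypothesis (ii). In the quick $1$-controllable case ($\bar\nu(\bar S)<K$), since $\lambda=0$ forces $\lambda\frac{\partial h}{\partial v}=0$, part (i) of Thm.~\ref{PMPe} gives $\pi=0$, and the argument of Prop.~\ref{NE} applies verbatim: evaluating \eqref{fe3} at $s=\bar S$ by continuity and testing $w^0=0$ (so $|w|=1$, $w\in\C$) yields $\min_{w\in\C\cap\partial\B_m}\zeta_{x_2}\cdot\sum_{j}g_{j}(\bar y^0(\bar S),\bar y(\bar S))w^j\ge0$, contradicting \eqref{QC}. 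In the drift-controllable case I cannot assume $\pi=0$, only $\pi\le0$; instead I would test the maximization \eqref{fe3} at $s=\bar S$ with the admissible value $(w^0,w)=(1,0)\in\CC$, obtaining $-\zeta_{x_2}\cdot f(\bar y^0(\bar S),\bar y(\bar S))-\zeta_{t_2}\le0$, i.e. $\zeta_{x_2}\cdot f\ge-\zeta_{t_2}$. Since $\zeta_{t_2}\le0$ gives $-\zeta_{t_2}\ge0$, this yields $\zeta_{x_2}\cdot f(\bar y^0(\bar S),\bar y(\bar S))\ge0$, directly contradicting the drift-controllability inequality \eqref{DC}.

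Both branches produce a contradiction, so $(\bar S,\bar y^0 ,\bar  y , \bar\nu, \bar \varphi^0 ,\bar \varphi )$ is a normal extended sense extremal, whence there is no infimum gap by Thm.~\ref{cor}. I expect the main obstacle to be the first step: in the degenerate regime the standard non-triviality \eqref{fe1} only guarantees $(p_0,p)\ne0$, so one must exclude the purely ``temporal'' multiplier $p\equiv0,\ p_0\ne0$, and it is exactly here that the epigraph hypothesis is indispensable --- without the sign $\zeta_{t_2}\le0$ such an abnormal extremal with vanishing $p$ could not be ruled out. A secondary point requiring care is the passage from the a.e.\ maximum condition \eqref{fe3} to its pointwise value at the endpoint $s=\bar S$, which relies on the continuity of $s\mapsto(p_0,p)(s)$ and of the data along the optimal trajectory.
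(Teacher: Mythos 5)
Your proof is correct and follows essentially the same route as the paper's: argue by contradiction, use the epigraph hypothesis to get the sign condition $\zeta_{t_2}\le 0$ and thereby exclude the purely temporal abnormal multiplier ($p\equiv 0$, $p_0\ne 0$ constant, forced to vanish since \eqref{fe3} gives $p_0\le 0$ while the epigraph sign gives $p_0\ge 0$), then split on hypothesis (ii) and test the endpoint maximization \eqref{fe3} with $(w^0,w)=(0,\hat w)$ (quick $1$-controllable case, where $\pi=0$ by Thm.~\ref{PMPe}(i)) or $(w^0,w)=(1,0)$ (drift-controllable case, where $\pi$ drops out), contradicting \eqref{QC} or \eqref{DC} respectively. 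The only cosmetic difference is that you phrase the non-vanishing step as $p(\bar S)\ne 0$ via backward uniqueness for the linear adjoint equation and compute the maximum explicitly as $\max\{p_0,\pi\}$, whereas the paper phrases it as $(p,\lambda)\ne(0,0)$; these are equivalent.
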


 \begin{proof}  Once again, the proof involves showing that the given extended sense minimizer $(\bar S,\bar y^0  $, $\bar y , \bar\nu, {\bar \varphi}^0 ,{\bar \varphi} )$
is a normal extremal; the fact that there is no  infimum gap will then follow from  Thm.  \ref{cor}. 
 Suppose in contradiction that $(\bar S,\bar y^0  , \bar y , \bar\nu, {\bar \varphi}^0 ,{\bar \varphi} )$ is not a normal extremal.   {To start with}, notice that, since, by assumption, ${\cal T}$ is (locally) an epigraph set, we  can deduce from   Thm.   \ref{PMPe} that there exists a set of multipliers $(p_0 , p ,\pi, \lambda)$, with $\lambda=0$ and  $p \ne0$.  Indeed, if on the contrary $(p ,\lambda)=(0,0)$, then \eqref{fe1} in  Thm.  \ref{PMPe} implies that $p_0 \ne0$. By   the first equation in \eqref{fe2} we derive that $p_0 \equiv \bar p_0$ is constant,    from  \eqref{fe3}  it may be deduced that 
\bel{peg}
\bar p_0\le0,
\eeq
 while   \eqref{fe4}  implies that  
 $
(p_0(0), p(0), p_0(\bar S), p(\bar S))=(\zeta_{t_1},\zeta_{x_1},-\zeta_{t_2},- \zeta_{x_2})
$
for some  \linebreak $(\zeta_{t_1},\zeta_{x_1},-\zeta_{t_2},- \zeta_{x_2}) \in    N_{\T}(\bar y^0 (0),\bar y (0), \bar y^0 (\bar S), \bar y (\bar S))$. Now, since ${\cal T}$ is (locally) an epigraph set, we know that $\zeta_{_{t_{2}}} \leq 0$. Hence  $\bar p_0\ge0$, which together with \eqref{peg} implies that $\bar p_0=0$,  in contrast with  the hypothesis  $p_0 \ne0$. Thus there exists a set of multipliers  $(p_0 , p ,\pi, \lambda)$, with $\lambda=0$ and  $p \ne0$.

\noindent At this point, from   \eqref{fe3} and  \eqref{fe4} in  Thm.  \ref{PMPe} it may be deduced that
 $$
(p_0(0), p(0), p_0(\bar S), p(\bar S))=(\zeta_{t_1},\zeta_{x_1},-\zeta_{t_2},- \zeta_{x_2})
$$
for some $(\zeta_{t_1},\zeta_{x_1},-\zeta_{t_2},- \zeta_{x_2}) \in    N_{\T}(\bar y^0 (0),\bar y (0), \bar y^0 (\bar S), \bar y (\bar S))$ with $\zeta_{x_2}\ne0$.

\noindent From \eqref{fe3} and the continuity of $(p_0 ,p )$ we deduce that
\begin{equation}
 \label{f31}
\displaystyle 
  \max_{(w^0,w)\in \CC} \bigg\{-\zeta_{x_2}\cdot \big(f( \bar y^0 (\bar S),\bar y(\bar S))w^0  +  \sum_{j=1}^m g_{j}( \bar y^0 (\bar S),\bar y(\bar S))w^{j}\big)
-\zeta_{t_2}  \,w^0  +  \pi|w|  \bigg\}=0 \,.
\end{equation}
Now, if $(S)$ is drift-controllable  w.r.t. ${\cal T}$ at $(\bar y^{0}(0),\bar  y(0), \bar y^{0}(\bar S), \bar y(\bar S))$,  choosing $(w^{0},w)=(1,0)$,  we arrive at
 $-\zeta_{x_2}\cdot   f( \bar y^0 (\bar S),\bar y(\bar S)) - \zeta_{t_{2}} \leq 0$,
where $\zeta_{_{t_{2}}} \leq 0$.  {This yields} the contradiction to \eqref{DC}:
$$
\displaystyle   \, \zeta_{x_2}\cdot  f( \bar y^0 (\bar S),\bar y(\bar S)) \;\ge \;0.
$$

\noindent  In case $\bar\nu(\bar S)<K$ and system (S) is   quick $1$-controllable  w.r.t. ${\cal T}$ at $(\bar y^{0}(0),\bar  y(0), \bar y^{0}(\bar S), \bar y(\bar S))$, $\pi=0$  by  Thm.  \ref{PMPe}. {So,  } choosing $w_0=0$ in   \eqref{f31},  we {obtain} the following contradiction to  \eqref{QC}:
$$
\displaystyle   \min_{w\in \C\cap\partial\B_m} \,  \zeta_{x_2}\cdot  \sum_{j=1}^m g_{j}(\bar y^0(\bar S),\bar y(\bar S))w^{j}  \;\ge \;0.
$$
Hence $(\bar S,\bar y^0  , \bar y , \bar\nu, {\bar \varphi}^0 ,{\bar \varphi} )$ is a normal extremal in both cases.
 \end{proof}
 
 \vsm
The no infimum gap sufficient conditions derived up to this point arise from the properties of normal extremals provided by  Thm.  \ref{mainextended}. But normality-type conditions fail to cover some situations where we can demonstrate the absence of  an infimum gap by independent analysis. One such case is identified in the following proposition.
\begin{proposition}[The case without drift]
\label{no_drift}
Consider the optimal control problem $(P)$ and its extended sense formulation $(P_{e})$. Let the data  satisfy hypothesis
{\rm (H1)}. Assume that
$$
f  \equiv 0\ \quad\text{(`no drift').}
$$
Then there is no infimum gap.
\end{proposition}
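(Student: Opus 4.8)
The plan is to establish the two infima
$$\inf\{J_e(z)\,:\,z\in{\mathcal A}_e\}\quad\text{and}\quad\inf\{J(z)\,:\,z\in{\mathcal A}\}$$
are equal. One inequality is automatic: via the embedding ${\cal I}$ and Lemma \ref{Min=}, every feasible strict process is a reparametrization of a feasible extended process of the same cost, so $\inf_{{\mathcal A}_e}J_e\le\inf_{{\mathcal A}}J$. Hence it suffices to prove the reverse, i.e.\ that every feasible extended process $\bar z=(\bar S,\bar y^0,\bar y,\bar\nu,\bar\varphi^0,\bar\varphi)$ can be approached, in the metric of \eqref{eclose1}, by \emph{feasible} embedded strict processes; since $h$ is continuous their costs then converge to $J_e(\bar z)$, giving $\inf_{{\mathcal A}}J\le J_e(\bar z)$ for every feasible $\bar z$, hence $\le\inf_{{\mathcal A}_e}J_e$. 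Equivalently, in the language of Definition \ref{IP} and Theorem \ref{mainextended}, I would show directly that under $f\equiv0$ \emph{no} feasible extended process is isolated.

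The structural fact I would exploit is that, when $f\equiv0$, the state equation in \eqref{extended} becomes $\frac{dy}{ds}=\sum_{j=1}^m g_j(y^0,y)\frac{d\varphi^j}{ds}$, in which the reparametrization rate $\frac{d\varphi^0}{ds}$ does \emph{not} appear: advancing or pausing the real time $y^0$ never moves the state $y$ on its own, the clock $y^0$ entering only as the first argument of the $g_j$. Accordingly, given $\bar z$ I keep its control path $\bar\varphi$ — equivalently the $u$-motion, hence the total variation $\bar\nu$, so that the constraint $\nu(\bar S)\le K$ is preserved \emph{exactly} — together with the initial data fixed, and, working rate-independently as in Remark \ref{rate}, perturb the time reparametrization so that $\frac{d\varphi^0_\epsilon}{ds}>0$ a.e., i.e.\ so that real time strictly increases along the trajectory. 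By Lemma \ref{characterization} this produces embedded strict processes, and by (H1) together with Gronwall's inequality the states $y_\epsilon$ converge uniformly to $\bar y$ as the perturbation tends to $0$. One thus obtains embedded strict processes matching the initial data and the total variation exactly, with the remaining endpoint data converging to those of $\bar z$.

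The main obstacle is \emph{feasibility}: since $y_\epsilon(\bar S)$ only converges to $\bar y(\bar S)$, the perturbed endpoints need not lie in the closed target $\T$, so the processes above are close but not yet feasible. This is precisely where the absence of drift is decisive, and the idea is to realize each impulse \emph{exactly} rather than merely in the limit. On a maximal $s$-interval where $\bar\varphi^0\equiv\tau$ the extended motion solves $\frac{dy}{d\sigma}=\sum_j g_j(\tau,y)\frac{d\bar\varphi^j}{d\sigma}$, and because pausing time carries no state cost when $f\equiv0$, this same endpoint is the frozen-time image produced by a strict control run over an arbitrarily short window $[\tau,\tau+\delta]$; using that $\C$ is a cone (so control speed may be scaled), and that any surplus of real time can be absorbed on the pure-waiting arcs, where the state is insensitive to reparametrization, one arranges $\varphi^0_\epsilon$ on the impulse set and a compensating reparametrization on the remaining arcs so that $y_\epsilon(\bar S)=\bar y(\bar S)$ \emph{exactly}, keeping the endpoint in $\T$. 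The delicate step — the one I expect to require the most care — is exactly this endpoint matching: showing that the $y^0$-dependence of the $g_j$ introduced by tilting the impulses can be cancelled by redistributing the clock's advance along the trajectory, the no-drift decoupling being what makes such a compensation available. Once a feasible embedded strict process with $y_\epsilon(\bar S)=\bar y(\bar S)$ is produced its cost equals $J_e(\bar z)$, and the equality of infima follows; the degenerate case $\bar\varphi^0(0)=\bar\varphi^0(\bar S)$ is handled by additionally letting the realized real-time length tend to $0$.
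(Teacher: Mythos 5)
Your high-level strategy is the paper's: with $f\equiv 0$, show that the endpoint data (including the total variation) of an arbitrary feasible extended sense process can be realized \emph{exactly} by an embedded strict sense process, so that feasibility and cost transfer and no feasible extended process is isolated. The difference is in execution, and the step you defer to the end is not a refinement --- it is the entire proof. The paper's argument (the lemma stated just after Proposition \ref{no_drift}) never moves the state path at all: it keeps the whole triple $(y,\nu,\varphi)$ of the given extended process and only \emph{re-times} it, replacing $\varphi^0$ by the linear surjection $\hat\varphi^0(s)=t_1+\frac{t_2-t_1}{S}\,s$ and renormalizing the parameter so that the speed constraint in \eqref{extended} holds again. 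Since $f\equiv0$ the state equation reads $\frac{dy}{ds}=\sum_{j=1}^m g_j(y^0,y)\,\frac{d\varphi^j}{ds}$, with no occurrence of the clock rate $\frac{d\varphi^0}{ds}$; provided the $g_j$ carry no genuine $t$-dependence, the \emph{same} path $y$ (reparameterized) solves the re-timed system, and since the new clock rate is strictly positive, Lemma \ref{characterization} yields an embedded strict sense process with identical endpoints $(t_1,x(t_1),t_2,x(t_2))=(y^0(0),y(0),y^0(S),y(S))$ and identical variation $v(t_2)=\nu(S)$. Feasibility and cost are preserved exactly; there is no Gronwall estimate, no limiting argument, and no endpoint repair anywhere.

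The genuine gap in your proposal is the ``compensation'' step, which you assert but do not prove, and which in the generality you are working in ($g_j$ depending on $t$) cannot be proved: no redistribution of the clock's advance can cancel the error created by tilting an impulse. Concretely, take $n=m=1$, $f\equiv0$, $g(t,x)=\cos(\pi t)$ (which satisfies (H1)), $\mathcal{C}=[0,+\infty)$, $K=1$, $\mathcal{T}=\{(0,0,1,1)\}$, $h\equiv0$. The extended process that jumps by $1$ at frozen time $y^0=0$ and then lets time run to $1$ is feasible, with $y(S)=1$ and $\nu(S)=1$; but a feasible strict sense process would need $u'\ge0$, $\int_0^1u'(t)\,dt\le1$ and $x(1)=\int_0^1\cos(\pi t)\,u'(t)\,dt=1$, which is impossible because $\cos(\pi t)<1$ on $(0,1]$ forces $\int_0^1\cos(\pi t)\,u'(t)\,dt<\int_0^1u'(t)\,dt$ whenever $u'$ is not a.e.\ zero (and gives $x(1)=0$ otherwise). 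So this extended process is isolated in the sense of Definition \ref{IP}, exact endpoint matching is unachievable, and for these data the strict and extended infima actually differ: no proof of the statement in this generality can exist. The no-drift argument genuinely requires the $g_j$ to be independent of $t$ --- as they are in all the paper's examples, and as is used implicitly in item (iii) of the paper's lemma, where the unchanged path is asserted to solve the re-timed system. Under that restriction your delicate step evaporates, since re-timing does not move the state at all, and your approximation scaffolding (Gronwall, limits, endpoint repair) becomes unnecessary. (The degenerate case $\bar y^0(0)=\bar y^0(\bar S)$, which you flag, is a separate issue that the paper's lemma also implicitly excludes, since problem $(P)$ requires $t_1<t_2$.)
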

\noindent
This sufficient condition is an immediate consequence of the lemma below, which  {yields directly the} following information about systems with no drift:  given an arbitrary extended sense process $(S, y^0  , y , \nu, \varphi^0 ,\varphi )$, we can find a strict sense process $(t_{1}, t_{2}, x , v, u )$ with the same endpoints, that is 
$$
y^{0}(0)=t_{1}, \ y(0)=  x(t_{1}),  \ 
y^{0}(S)=t_{2},  \  y(S)= x(t_{2}) \  \text{and} \   \nu( S)=v(t_{2}). 
$$
For such systems, an extended process is feasible if and only if  the corresponding strict sense process is feasible and the costs  (for problems $(P)$ and $(P_{e})$ respectively) are the same;  an infimum gap cannot then arise.
 
 \begin{lemma} Let hypothesis {\rm (H1)} be satisfied. Assume that
$$
f  \equiv 0\,.
$$
Then, given any extended sense process $(S,y^0 ,y , \nu, \varphi^0 ,\varphi )$ for \eqref{extended}, there exists a  strictly increasing, onto, absolutely continuous  map $\sigma : [t_1,t_2]\to [0,S]$ such that the trajectory-control pair 
\begin{equation}
\label{equiv}
(x , v , u)(t):= \Big(y, \nu, \varphi \Big)\circ\sigma(t) \quad \forall t\in [t_1,t_2]
\end{equation}
is  a strict sense process for \eqref{strict0}.
\end{lemma}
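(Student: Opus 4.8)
The plan is to undo the time-reparametrization that produced the extended process, by exhibiting $\sigma$ as an absolutely continuous, strictly increasing inverse of the real-time component $y^0$. Note first that $\tfrac{dy^0}{ds}=\tfrac{d\varphi^0}{ds}\ge 0$, so $y^0:[0,S]\to[t_1,t_2]$ is nondecreasing and surjective with $y^0(0)=t_1$, $y^0(S)=t_2$; hence any admissible $\sigma$ must satisfy $\sigma(t_1)=0$, $\sigma(t_2)=S$, which already forces the endpoint identities $x(t_1)=y(0)$, $x(t_2)=y(S)$, $v(t_2)=\nu(S)$ recorded before the lemma. Once $\sigma$ is in hand I would verify the three requirements of a strict-sense process in \eqref{strict0} by the chain rule applied to \eqref{equiv}: writing $\tfrac{du}{dt}=\tfrac{d\varphi}{ds}(\sigma(t))\,\sigma'(t)$, the inclusion $\tfrac{du}{dt}\in\mathcal C$ is immediate, since $\tfrac{d\varphi}{ds}(\sigma)\in\mathcal C$ and $\mathcal C$ is a cone closed under the nonnegative scaling $\sigma'(t)\ge 0$; likewise $\tfrac{dv}{dt}=\bigl|\tfrac{d\varphi}{ds}(\sigma)\bigr|\sigma'=\bigl|\tfrac{du}{dt}\bigr|$ follows from positive homogeneity of the norm.

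The heart of the matter is the state equation. Using $f\equiv 0$, the extended dynamics give $\tfrac{dx}{dt}=\sum_j g_j\bigl(y^0(\sigma(t)),y(\sigma(t))\bigr)\tfrac{d\varphi^j}{ds}(\sigma)\,\sigma'(t)$, whereas a strict-sense trajectory must obey $\tfrac{dx}{dt}=\sum_j g_j\bigl(t,x(t)\bigr)\tfrac{du^j}{dt}$. Since $x(t)=y(\sigma(t))$, these agree precisely when $y^0(\sigma(t))=t$ for almost every $t$, i.e.\ when $\sigma$ is a genuine right inverse of $y^0$. On every maximal open set where $\tfrac{d\varphi^0}{ds}>0$ this is unproblematic: there $y^0$ is strictly increasing and its inverse is absolutely continuous, exactly as in Lemma \ref{characterization}, with $\sigma'=1/\tfrac{d\varphi^0}{ds}(\sigma)$.

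I expect the main obstacle to be the \emph{instantaneous arcs}, i.e.\ the $s$-intervals on which $\tfrac{d\varphi^0}{ds}=0$ while $\tfrac{d\varphi}{ds}\neq 0$: there $y^0$ is flat (real time is frozen) yet the state $y$ genuinely moves, so a na\"ive right inverse of $y^0$ would have to jump across such an interval and could be neither onto $[0,S]$ nor continuous. This is exactly where the no-drift hypothesis must be used. Because $f\equiv 0$, the drift term $f\,\tfrac{d\varphi^0}{ds}$ is absent and the evolution of $(y,\nu)$ is driven solely by the control increments $\tfrac{d\varphi}{ds}$; I would exploit this to let real time advance at a strictly positive rate across each instantaneous arc, absorbing the frozen-time intervals into $\sigma$ so that it becomes strictly increasing, surjective and absolutely continuous, while checking that the composition \eqref{equiv} still solves \eqref{strict0}. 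Concretely, I would assemble $\sigma$ globally from a strictly increasing absolutely continuous surrogate of $y^0$ that charges each instantaneous arc a positive slice of real time, and then argue, via rate independence (Remark \ref{rate}) together with $f\equiv 0$, that the reparametrized pair retains the strict-sense state equation. Verifying this last compatibility, namely reconciling the argument of the $g_j$ at the true time $t$ across the former jumps, is the delicate step, and is precisely where the absence of drift is indispensable.
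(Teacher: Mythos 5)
Your outline correctly disposes of the control constraint and the variation equation, but it stops exactly where the lemma's content lies: you never construct $\sigma$ across the instantaneous arcs, and you never verify the state equation there --- you only announce that $f\equiv 0$ will make this work. In the form you describe, it cannot. You ask $\sigma$ to be a strictly increasing surjection of $[t_1,t_2]$ onto $[0,S]$ \emph{and} to satisfy $y^0(\sigma(t))=t$ a.e.\ (so that the first arguments of the $g_j$ match). These demands are mutually exclusive whenever an instantaneous arc exists: if $y^0\equiv c$ on $[s_1,s_2]$ with $s_1<s_2$, then $\sigma^{-1}([s_1,s_2])$ is a nondegenerate $t$-interval on which $y^0\circ\sigma\equiv c$, so $y^0\circ\sigma$ is not the identity there, for \emph{every} admissible $\sigma$; and $f\equiv 0$ cannot repair this, since it constrains the drift, not the time-slot of the $g_j$. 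Indeed, when the $g_j$ depend genuinely on $t$ the conclusion itself fails: take $n=m=1$, $\mathcal{C}=\R^+$, $f\equiv 0$, $g_1(t,x)=\sin(\pi t/2)$, and the extended process on $[0,2]$ with $\frac{d\varphi^0}{ds}=\chi_{[0,1]}$, $\frac{d\varphi^1}{ds}=\chi_{[1,2]}$, $(y^0,y,\nu)(0)=(0,0,0)$; any candidate \eqref{equiv} must have $x(1)=y(2)=1$ and $v(1)=\nu(2)=1$, whereas every solution of \eqref{strict0} with $\frac{du}{dt}\ge 0$ satisfies $x(1)-x(0)=\int_0^1\sin(\pi t/2)\,u'(t)\,dt<\int_0^1 u'(t)\,dt=v(1)-v(0)$ unless $u'=0$ a.e. Conversely, if $g_j=g_j(x)$ --- the only reading under which the lemma (and its use in Example \ref{es3}) is correct, and the one the paper's own computation tacitly adopts --- there is no time argument to reconcile at all, and the chain-rule computation you already performed shows that \emph{any} strictly increasing absolutely continuous surjection $\sigma$ works, e.g.\ the affine one. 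So your ``delicate step'' is either impossible or vacuous; in neither case does your plan close into a proof.

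The idea you are missing, and the one the paper's proof is built on, is that no drift makes the time variable decouple from the state dynamics, so $\sigma$ should be built with no reference to $y^0$ at all, rather than as a corrected inverse of it. The paper discards $\varphi^0$ outright and substitutes the affine map $\hat\varphi^0(s)=t_1+\frac{t_2-t_1}{S}\,s$; because $f\equiv 0$, the term $f\cdot\frac{d\varphi^0}{ds}$ affected by this substitution is absent from the $y$-equation, so the same path $(y,\nu)$ still solves \eqref{extended} with the new time control (granted the $g_j$ carry no explicit $t$-dependence). It then applies the bi-Lipschitz reparameterization $r(s)=\int_0^s\big(\frac{d\hat\varphi^0}{ds'}(s')+\big|\frac{d\varphi}{ds'}(s')\big|\big)\,ds'$ to restore the normalization \eqref{can}, obtaining an extended process with $\frac{d\tilde\varphi^0}{dr}>0$ a.e., i.e.\ an embedded strict sense process, and reads off $(x,v,u)$ from Lemma \ref{characterization}; the resulting $\sigma=r^{-1}\circ(\tilde\varphi^0)^{-1}$ is strictly increasing, onto and absolutely continuous, but bears no relation to $y^0$. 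Rate independence (Remark \ref{rate}) enters only through this renormalization, not, as you suggest, to ``reconcile the argument of the $g_j$ at the true time $t$'': that reconciliation is precisely what cannot be done, and precisely what the no-drift structure renders unnecessary.
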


\begin{proof} Define 
$$ \hat\varphi^0(s) := t_1 +\frac{t_2-t_1}{S}\, s \quad \forall s\in[0,S].$$
Consider the bi-Lipschitz change of parameters $r:[0,S]\to [0,S']$
$$
r(s) := \int_0^s \Big(\frac{d\hat\varphi^0}{ds'}(s') +  \left|\frac{d\varphi}{ds'}(s')\right|\Big) ds' \quad\forall s\in[0,S]
$$
where $S' := r(S)$.
Write the inverse mapping  $s  := r^{-1} $. Now define the Lipschitz continuous functions
\begin{equation}
\label{strict1}
(\tilde\varphi^0,\tilde\varphi)(r):= (\hat\varphi^0,\varphi)\circ s(r) \quad\forall r\in [0,S'].
\end{equation}
We can show, by means of straightforward calculations, that 
\begin{itemize}
\item[(i)] $\ds \left(\frac{d\tilde\varphi^0}{dr}(r) ,\frac{d\tilde\varphi^0}{dr}(r)\right)\in \CC$ for a.e. $r\in [0,S']$;
\item[(ii)] $\tilde\varphi^0(0)=t_1$, $\tilde\varphi^0(S')=t_2$;
\item[(iii)] the path $(\tilde y^0,\tilde y, \tilde\nu) $ defined by    \begin{equation}\label{strict2}(\tilde y^0,\tilde y, \tilde\nu)(r):= \big(\tilde\varphi^0(r), y\circ s(r), \nu\circ s(r)\big)\quad\forall r\in [0,S']\end{equation}
coincides with the unique solution to \eqref{extended} corresponding to $(\tilde\varphi^0,\tilde\varphi) $ and initial state $(y^{0},y,\nu)(0)$.
\end{itemize}
(Note that, to establish (iii), we make use of our assumption that $f  \equiv 0$).
Since  $\ds \frac{d\tilde\varphi^0}{dr}(r) >0$ for a.e. $r\in [0,S']$, the extended sense process $(S,\tilde y^0,\tilde y, \tilde \nu, \tilde\varphi^0,\tilde\varphi)$ is an embedded strict sense process, i.e., $(x,v,u) $ defined by
$$
(x,v,u)(t):= (\tilde y, \tilde \nu,\tilde\varphi)\circ ({\tilde\varphi^0})^{-1}(t)\quad\forall t\in [t_1,t_2]
$$
is a strict sense process for \eqref{strict0} (in which $f  \equiv 0$). To complete the proof, we observe that \eqref{equiv} follows from \eqref{strict1} and \eqref{strict2}, when we choose $\sigma:= s\circ ({\tilde\varphi^0})^{-1}$.
\end{proof}
 \section{Examples}
We provide, in this section, a number of examples to illustrate the preceding theory. 
 
\begin{example}\label{es1} {\rm This example tells us that an infimum gap can actually occur when the sufficient condition of  Thm.  \ref{cor} is violated. 
$$
\left\{
\begin{array}{l}
\mbox{Minimize } -x_{1}(1)
\\ [1.5ex]
\mbox{subject to}
\\ [1.5ex]
\ds\frac{d x_{1}}{dt}(t) = \frac{d u}{dt}(t) \  \mbox{a.e. }t\in [0,1] 
\\ [1.5ex]
\ds\frac{d x_{2}}{dt}(t)= x_{1}(t)  \ \mbox{a.e. }t\in [0,1]
\\ [1.5ex]
\ds\frac{d v}{dt}(t) = \left|\frac{d u}{dt}(t)\right| \  \mbox{a.e. }t\in [0,1] 
\\ [1.5ex]
\ds\frac{d u}{dt}(t)\geq 0  \ \mbox{a.e. }t\in [0,1]
\\ [1.5ex]
v(0)=0, \ v(1)\le 1, \ x_{1}(0)=x_{2}(0)=0 \mbox{ and }  x_{2}(1)\leq0.
\end{array}
\right.
$$
In this special case of  $(P)$, $n=2$, $m=1$,  ${\cal T}= \{0\} \times \{0_2\} \times \{1\} \times  
\hat{{\cal T}}$, 
in which 
$\hat{{\cal T}}= \R \times (-\infty,0]$, ${\cal C} = [0, +\infty)$, $K=1$ and
$$
f(x)= \left( 
\begin{array}{c}
0
\\
x_{1}
\end{array}
\right),\ \
g_{1}=
\left( 
\begin{array}{c}
1
\\
0
\end{array}
\right) . 
$$
The extended problem is
$$
\left\{
\begin{array}{l}
\mbox{Minimize } -y_1(S)
\\  [1.5ex]
\mbox{over $S>0$, } (y^0,y, \nu, \varphi^0,\varphi) \in W^{1,1}([0,S];\R\times\R^{2}\times\R\times\R\times\R )   \quad
\mbox{satisfying }\\  [1.5ex]
\ds \frac{d y^{0}}{ds}(s)= \frac{d  \varphi^0}{ds}(s) \mbox{ a.e. }s \in [0,S]
\\ [1.5ex]
\ds\frac{d  y^{1}}{ds}(s) =  \frac{d  \varphi}{ds}(s) \ \ \  \mbox{ a.e. }s \in [0,S]\,,
\\ [1.5ex]
\ds\frac{d  y^{2}}{ds}(s)=  y^{1}(s)\,\frac{d  \varphi^0}{ds}(s) \ \ \mbox{ a.e. }s \in [0,S]\,,
\\ [1.5ex]
\ds\frac{d  \nu}{ds}(s)=   \,\left|\frac{d  \varphi}{ds}(s)\right| \ \ \mbox{ a.e. }s \in [0,S]\,,
\\ [1.5ex]
\ds \bigg(\frac{d  \varphi}{ds}(s), \frac{d  \varphi}{ds}(s)\bigg) \in \{(w_0,w)\in \R_+\times\R_+: \ w_0+w=1\}  \ \  \mbox{ a.e. }s \in [0,S]\,,
\\ [1.5ex]
 \nu(0)=0, \ \nu(S)\le 1, \ (y^{0}(0),y^{1}(0),y^{2}(0))=(0,0,0), \quad y^{0}(S)=1, \quad    y^2(S)\le0. 
\end{array}
\right.
$$
It is straightforward to show that $(\bar S,\bar y^{0} , \bar y , \bar\nu,  \bar \varphi^{0} ,\bar  \varphi )$
given by $\bar S =2$, $\ds\frac{d \bar \varphi^{0}}{ds} \equiv 1-\bar w $,  $\ds\frac{d \bar \varphi}{ds} \equiv \bar w  $, in which  $\bar y =( \bar y^1, \bar y^2) $, $\bar y_{2} \equiv 0$, $\bar \nu\equiv \bar y_{1}$ 
and
\begin{eqnarray*}
&& \bar y_{1}(s)=
\left\{
\begin{array}{ll}
0& \mbox{for } 0\leq s\leq 1
\\
s-1& \mbox{for } 1< s\leq 2
\end{array}
\right.
,\,
\bar y^{0}(s)=
\left\{
\begin{array}{ll}
s& \mbox{for } 0\leq s\leq 1
\\
1& \mbox{for } 1< s\leq 2
\end{array}
\right.,\,
\bar w(s)=
\left\{
\begin{array}{ll}
0& \mbox{for } 0\leq s\leq 1
\\
1& \mbox{for } 1< s\leq 2
\end{array}
\right. 
\end{eqnarray*}
is a minimizer for the extended problem. Furthermore the infimum costs for the original and extended problems are, respectively,
$$
\inf (P) =0 \mbox{ and } \inf (P_{e}) =- 1\,.
$$
Since there is an infimum gap, all the the sufficient conditions of the previous section, for non-occurence of an infimum gap, must be violated. In connection with Prop. \ref{NE} we note that the vector $\zeta =(0,1)$ lies in $N_{\hat {{\cal T}}}(\bar y(\bar S))= \{0\}\times [0,\infty) $ and
$$
\zeta\cdot g_{1}\, w \geq 0 
$$
for every $w \in {\cal C}$, in violation of the
 the quick 1-controllability  condition. (Also, condition  $\bar\nu(\bar S)< K$ is violated).   Concerning the Prop. \ref{slow}, we see that
$$
\zeta \cdot f(\bar y(\bar S)) =0\,
$$
in violation of the drift controllability condition.  Consider the normality condition. We can establish by simple calculations that $(\bar S,\bar y^{0} , \bar y , \bar\nu,  \bar \varphi^{0} ,\bar  \varphi )$ is an extremal, and a possible Lagrange multiplier set is
$(p_0 ,  p_{1} , p_{2} , \pi, \lambda)$, in which
$$
p_0 \equiv  0,  \ \ p_{1}  \equiv 0,\ \  p_{2}  \equiv c,  \ \   \pi=0, \ \lambda=0, 
$$
for any constant $c >0$. Notice that $\lambda=0$, so $(\bar S,\bar y^{0} , \bar y , \bar\nu,  \bar \varphi^{0} ,\bar  \varphi )$ is an abnormal extremal. Since there is a unique minimizer\footnote{ Of course, `unique' here means `up to {translations} of $(\varphi^0,\varphi)$'.} for the extended problem, we have shown that all minimizers for the extended problem are abnormal. Thus, the sufficient condition of  Thm.  \ref{cor} is violated.
}
\end{example}

\begin{example}\label{es2} {\rm  This example aims to demonstrate that the `no infimum gap' sufficient condition of  Thm. \ref{cor}, based on normality, is distinct from those of Props. \ref{NE} and \ref{slow}, based on quick $1$-controllability and on drift controllability respectively. 
%
%
Consider the problem
\bel{strictex3}
\left\{
\begin{array}{l}
\mbox{Minimize } h(x(1))
\\ [1.5ex]
\mbox{over } (x,v, u) \in W^{1,1}([0,1];\R^{3}\times\R\times\R^2)   \quad
\mbox{satisfying }\\ [1.5ex]
\begin{array}{l}
\ds \frac{d x}{dt}(t) = f(x(t))+g_1(x(t)) \, \frac{d u_1}{dt}(t) + g_2(x(t))  \, \frac{d u_2}{dt}(t)  \,, \quad \mbox{ a.e. } t \in [0,1] ,
\\ [1.5ex]
\ds \frac{d v}{dt}(t) =  \, \left|\frac{d u}{dt}(t)\right|  \,, \quad \mbox{ a.e. } t \in [0,1] ,
\\ [1.5ex]
\displaystyle\frac{du}{dt}(t)\in\C:=\R^2  \  \mbox{ a.e. } t \in [0,1],   \\ [1.5ex]
\displaystyle v(0)=0, \ \ v(1)   \leq K ,\quad 
 x(0)=(1,0,0), \quad   x(1)\in\hat{\mathcal{T}} .\end{array}
\end{array}
\right.
\eeq
in which $n=3$, $m=2$, K=2, $h(x):= -x_{1}$, $\hat{\mathcal{T}}:= \{(x_1 , x_2,x_3) \,|\, \  x_1\le0, \ x_2 \leq 0, \   x_3 \leq 0\}$
\bel{gnhi}
\ds g_1(x):=\left(\begin{array}{l} \ 1 \\ \ 0\\x_2\end{array}\right),   \quad \ds g_2(x):=\left(\begin{array}{l} 0 \\ 1\\-x_1\end{array}\right) , \quad  \ds f(x):=\left(\begin{array}{l} \ 0 \\  x_2\\ \ 0\end{array}\right), \quad \forall  x\in \R^3\,.
\eeq
This is an example of problem $(P)$, in which the underlying control system is a modification of the nonholonomic integrator, to include  {a} non-zero drift term.
\ \\

\noindent
The extended  problem is
\bel{ex31}
\left\{
\begin{array}{l}
\mbox{Minimize } h(y(S))
\\ [1.5ex]
\mbox{over $S>0$, } (y^0,y,\nu, \varphi^0,\varphi) \in W^{1,1}([0,S];\R\times\R^{3}\times\R\times\R\times\R^2)   \quad
\mbox{satisfying }\\ [1.5ex]
\begin{array}{l}
\displaystyle \frac{dy^0}{ds}(s)\,= \,\frac{d\varphi^0}{ds}(s)  \,, \quad \mbox{ a.e. } s \in [0,S] , 
 \\ [1.5ex]
\displaystyle \frac{dy}{ds}(s)\,= f(y(s)) \,\frac{d\varphi^0}{ds}(s)+ g_1(y(s)) \, \frac{d \varphi^1}{ds}(s) + g_2(y(s))  \, \frac{d \varphi^2}{ds}(s)  \,, \quad \mbox{ a.e. } s \in [0,S] , 
\\ [1.5ex]
\displaystyle \frac{d\nu}{ds}(s)\,= \,\left|\frac{d\varphi }{ds}(s)\right|  \,, \quad \mbox{ a.e. } s \in [0,S] , 
 \\ [1.5ex]
\displaystyle \left(\frac{d\varphi^0}{ds}(s), \frac{d\varphi}{ds}(s)\right)\in\CC:=\left\{(w_0,w)\in\R_+\times\R^2:   \ w_0+|w|=1\right\}  \,\, \mbox{ a.e. } s \in [0,S],   \\ [1.5ex]
\nu(0)=0, \ \ \nu(S)  \leq K ,\quad 
\displaystyle y(0)=(1,0,0), \quad    y^0(0)=0, \quad y^0(S)=1, \quad  y (S)\in\hat{\mathcal{T} }.\end{array}
\end{array}
\right.
\eeq
It is straighforward to show that 
the feasible extended sense process  $(\bar S, \bar y^0 , \bar y , \bar\nu,  \bar \varphi^0 , \bar \varphi )$, where $\bar S=2$, 
\bel{optc}
\bigg(\frac{d\bar \varphi^0}{ds},\frac{d\bar \varphi}{ds}\bigg) =\bigg(\frac{d\bar \varphi^0}{ds} ,\frac{d\bar \varphi^1}{ds} ,\frac{d\bar \varphi^2}{ds}\bigg) =(1,0,0)\chi_{[0,1]} +(0,-1,0)\chi_{[1,2]} \,,
\eeq
  and  
\bel{optt}
(\bar y^0,\bar y,\bar\nu) =(\bar y^0,\bar y^1,\bar y^2,\bar y^3,\bar\nu)  =(s,1,0,0,0 )\chi_{[0,1]} +(1,2-s,0,0,s-1)\chi_{[1,2]} \,,
\eeq
is the  unique minimizer. By  Thm.  \ref{PMPe} there exists a set of multipliers $(p ,p_0,\pi, \lambda)$ with $(p ,\lambda)\ne0$, where  $\lambda\ge0$, $p_0\in\R$  and  $\pi=0$,  since $\frac{\partial h}{\partial v}\equiv0$ and  $\bar\nu(\bar S)=1<2$. The costate trajectory $p $ satisfies the differential system
\bel{ex3f1}
\left\{
\begin{array}{l}
\displaystyle \frac{dp_1}{ds}(s)\,= 0, \\ \, \\
\displaystyle \frac{dp_2}{ds}(s)\,=-p_2(s)\chi_{[0,1]}(s) +p_3(s)\chi_{[1,2]}(s)\,  \,, \quad \mbox{ a.e. } s \in [0,2] ,  \\ \, \\
\displaystyle \frac{dp_3}{ds}(s)\,=0\,.
\end{array}\right.
\eeq
We see that  $p_1 \equiv \bar p_1$ and $p_3 \equiv \bar p_3$ are constants.  The triple $(p ,p_0, \lambda)$   verifies the transversality condition
\bel{ex3f2}
-(p_1,p_2,p_3)(2)\in\lambda(-1,0,0)+N_{\hat{\mathcal{T}}}(0,0,0), 
\eeq
and, for a.e. $s\in[0,2]$,  the relations
\bel{ex3f3}
\begin{array}{l}
 p_0\chi_{[0,1]}(s)-\bar p_1\chi_{[1,2]}(s)= \\ 
 \max_{(w_0,w)\in\CC}\bigg\{p_0\,w_0+\bar  p_1\,w_1+p_2(s)\,( \bar y_2(s)\,w_0+w_2)+\bar p_3\,\big(\bar y_2(s)\,w_1-\bar y_1(s)\,w_2\big)\bigg\}=0.
 \end{array}
\eeq
From the first relation in  \eqref{ex3f3} we deduce that $p_0=\bar p_1=0$.  Then, since $\bar y_2 \equiv 0$ and $(w_0,w_1,w_2)=(0,0,\pm1)\in\CC$,  by the second relation in \eqref{ex3f3}  we necessarily have 
\bel{ex3p2}
p_2(s)= \bar p_3\bar y_1(s) \quad \forall   s\in[0,2].
\eeq 
Notice that $N_{\hat{\mathcal{T}}}(0,0,0)=\{(\alpha,\beta,\gamma)\}$ for any $\alpha$, $\beta$ and $\gamma\ge0$. Hence condition \eqref{ex3f2} implies 
$$
-(p_1,p_2,p_3)(2)=(-\lambda+\alpha, \beta, \gamma)
$$
and we arrive to the equalities 
$$
\lambda=\alpha, \quad \bar p_3=-\gamma, \quad  p_2 =(\gamma-\beta)\,e^{1-s}\,\chi_{[0,1]} +\big(\gamma(2-s)-\beta\big)\,\chi_{[1,2]} ,
$$
 It follows simply from \eqref{ex3p2} that $\gamma=\beta=0$, so that $\bar p_3=0$ and $ p_2 \equiv 0$. This proves that $(\bar S, \bar y^0, \bar y, \bar\nu, \bar \varphi^0, \bar \varphi) $ is a normal extremal,  since $(p , \lambda)\ne0$ if and only if $\lambda=\alpha>0$. We have shown that the conditions of  Thm.   \ref{cor} are satisfied and, in consequence, there  is no infimum gap.
 }
 \end{example}
 
 It is a straightforward matter to check that the quick $1$-controllability and drift controllability conditions in Props. \ref{NE} and \ref{slow}, respectively, are both violated. We know from Prop. \ref{NE} and Prop. \ref{slow}, that the sufficient condition of  Thm.  \ref{cor}  covers all cases when Props. \ref{NE} and \ref{slow}  exclude infimum gaps. This example  goes further, by showing that, in some cases, the sufficient condition of  Thm.  \ref{cor} excludes an infimum gap, when the other two conditions fail to do so and therefore has broader potential application.  
 
\begin{example}\label{es3} {\rm  The purpose of this example is to demonstrate that  the `no infimum gap' sufficient condition of  Thm.  \ref{cor}, based on normality,  is not necessary.  Consider the optimal control problem 
 \bel{strictex2}
\left\{
\begin{array}{l}
\mbox{Minimize } h(x(1))
\\ [1.5ex]
\mbox{over } (x,v, u) \in W^{1,1}([0,1];\R^{3}\times\R\times\R^2)   \quad
\mbox{satisfying }\\ 
[1.5ex]
\begin{array}{l}
\ds \frac{d x}{dt}(t) = g_1(x(t)) \, \frac{d u_1}{dt}(t) + g_2(x(t))  \, \frac{d u_2}{dt}(t)  \,, \quad \mbox{ a.e. } t \in [0,1] ,
\\ [1.5ex]
\ds \frac{d v}{dt}(t) =  \,\left| \frac{d u }{dt}(t)\right|  \,, \quad \mbox{ a.e. } t \in [0,1] ,
\\ [1.5ex]
\displaystyle\frac{du}{dt}(t)\in\C:=\{(w_1,w_2)\,|\, \  w_1\in\R, \ w_2\ge0\}  \  \mbox{ a.e. } t \in [0,1],   \\ [1.5ex]
\displaystyle v(0)=0, \ \ v(1)   \leq K ,\quad 
 x(0)=(1,0,0), \quad   x(1)\in\hat{\mathcal{T}},\end{array}
\end{array}
\right.
\eeq
in which $n=3$, $m=2$, $K=2$, $h $ and $g_{1} $, $g_{2} $  and $\hat{{\cal T}}$ are as in the previous example. Note however that, now, the drift term $f \equiv 0$.
The extended  problem is
\bel{ex21}
\left\{
\begin{array}{l}
\mbox{Minimize } h(y(S))
\\ [1.5ex]
\mbox{over $S>0$, } (y^0,y, \nu, \varphi^0,\varphi) \in W^{1,1}([0,S];\R\times\R^{3}\times\R\times\R\times\R^2)   \quad
\mbox{satisfying }\\
[1.5ex]
\begin{array}{l}
\displaystyle \frac{dy^0}{ds}(s)\,= \,\frac{d\varphi^0}{ds}(s)\,, \quad \mbox{ a.e. } s \in [0,S] ,  \\ [1.5ex]
\displaystyle \frac{dy}{ds}(s)\,= g_1(y(s)) \, \frac{d \varphi^1}{ds}(s) + g_2(y(s))  \, \frac{d \varphi^2}{ds}(s)  \,, \quad \mbox{ a.e. } s \in [0,S] , 
\\ [1.5ex]
\displaystyle \frac{d\nu}{ds}(s)\,= \,\left|\frac{d\varphi}{ds}(s)\right| \,, \quad \mbox{ a.e. } s \in [0,S] ,  \\ [1.5ex]
\displaystyle \left(\frac{d\varphi^0}{ds}(s), \frac{d\varphi}{ds}(s)\right)\in\CC:=\left\{(w_0,w)\in\R_+\times\C:   \ w_0+|w|=1\right\}  \,\, \mbox{ a.e. } s \in [0,S],   \\ [1.5ex]
\nu(0)=0, \ \ \nu(S)   \leq 2 ,\quad 
\displaystyle y(0)=(1,0,0), \quad    y^0(0)=0, \quad y^0(S)=1, \quad  (y_1,y_2,y_3)(S)\in\hat{\mathcal{T} }.\end{array}
\end{array}
\right.
\eeq
The minimizing extended sense process  $(\bar S, \bar y^0 , \bar y , \bar\nu, \bar \varphi^0 , \bar \varphi )$ for the optimal control problem (\ref{strictex2}), studied in the  previous example, given by
\eqref{optc}, \eqref{optt}, is a minimizing extended sense process also for problem \eqref{ex21}. The multiplier set $(\lambda,p ,p_0,\pi)$, where  $(\lambda,p )\ne0$,  $\lambda\ge0$, $p_0\in\R$,   is such that  $\pi=0$,  since   $\frac{\partial h}{\partial v}\equiv0$ and  $\bar\nu(\bar S)=1<2$. The costate trajectory $p $ satisfies
\bel{ex2f1}
\left\{
\begin{array}{l}
\displaystyle \frac{dp_1}{ds}(s)\,= 0, \\ \, \\
\displaystyle \frac{dp_2}{ds}(s)\,= p_3(s)\chi_{[1,2]}(s)\,  \,, \quad \mbox{ a.e. } s \in [0,2] ,  \\ \, \\
\displaystyle \frac{dp_3}{ds}(s)\,=0
\end{array}\right.
\eeq
We see that $p_1 \equiv \bar p_1$ and $p_3 \equiv \bar p_3$ are constants.  We also know that,   for a.e. $s\in[0,2]$,
\bel{ex2f3}
 p_0\chi_{[0,1]}(s)-\bar p_1\chi_{[1,2]}(s)=\max_{(w_0,w)\in\CC}\bigg\{ p_0w_0+\bar p_1\,w_1+p_2(s)\, w_2+\bar p_3\,\big(\bar y_2(s)\,w_1-\bar y_1(s)\,w_2\big)\bigg\}=0.
\eeq
From   \eqref{ex2f3} we deduce that $p_0=\bar p_1=0$ and,  since $\bar y_2 \equiv 0$,  $\bar y_1 =\chi_{[0,1]} +(2-s)\,\chi_{[1,2]} $ and $w_2\ge0$ for all $(w_0,w)\in\CC$,  also that
\bel{ex2p2}
p_2(s)\le  \bar p_3\quad \forall   s\in[0,1] , \qquad  p_2(s)\le \bar p_3\,(2-s) \quad \forall   s\in[1,2].
\eeq 
We deduce from the transversality condition, as in the previous example, that
$$
-(p_1,p_2,p_3)(2)=(-\lambda+\alpha, \beta, \gamma)
$$
which implies
$
\lambda=\alpha$ $\bar p_3=-\gamma$ and 
\begin{equation}
\label{multiplierp2} 
 p_2 =(\gamma-\beta)\,\chi_{[0,1]} +\big(\gamma(2-s)-\beta\big)\,\chi_{[1,2]} .
\end{equation}
 \eqref{ex2p2}  implies $\beta\ge2\gamma$. 
 Choosing $\alpha =0$,  and any $\gamma >0$, $\beta > 2 \gamma$, we arrive at a multiplier set $(p_{0}=0, p_{1}\equiv 0, p_{2} , p_{3} \equiv- \gamma,  \pi =0,  \lambda =0)$, in which $p_{2} $ is given by \eqref{multiplierp2}. We have shown that the unique minimizer is not a normal extremal, so the sufficient condition for no infimum gap of  Thm.  \ref{cor} is not applicable to this problem.
 
 Recall that the optimal control problem of this example has no drift. For such problems, we know that there can be no infimum gap. (See Prop. \ref{no_drift}).  Consequently, the sufficient condition of  Thm.  \ref{cor} fails to eliminate the possible occurrence of an infimum gap, in some circumstances when we can establish, by other means, that there is no infimum gap.
 }
 \end{example}

\section*{References}

\end{document}